\documentclass[12pt,letterpaper]{amsart}
\usepackage{amsmath,amssymb}
\usepackage{amsthm}
\usepackage[abbrev]{amsrefs}
\usepackage{latexsym}
\usepackage{hyperref}
\usepackage{color}

\pagestyle{plain}

\numberwithin{equation}{section}
\theoremstyle{plain}
\newtheorem{thm}{Theorem}[section]
\newtheorem{prop}[thm]{Proposition}
\newtheorem{lem}[thm]{Lemma}

\theoremstyle{definition} 
\newtheorem{dfn}[thm]{Definition}

\newtheorem*{dfn*}{Definition}

\theoremstyle{remark}
\newtheorem{rem}[thm]{Remark}
\newtheorem*{ack}{Acknowledgment}
\newcommand{\set}[1]{\{\,{#1}\,\}}

\DeclareMathOperator{\pr}{pr}
\DeclareMathOperator{\id}{id}

\newcommand{\cL}{\mathcal{L}}

\newcommand{\field}[1]{\mathbb{#1}}

\newcommand{\R}{\field{R}}

\newcommand{\N}{\field{N}}

\newcommand{\ep}{\varepsilon}

\newcommand{\mmsp}{mm-space}
\newcommand{\supp}{\mathop{\rm supp}}
\newcommand{\diam}{\mathop{\rm diam}}

\DeclareMathOperator{\dP}{{\it d}_{{\rm P}}}

\newcommand{\dkf}[1]{{\it d}_{\rm KF}^{#1}}

\DeclareMathOperator{\dconc}{{\it d}_{{\rm conc}}}
\newcommand{\dconcpi}[1]{{\it d}_{{\rm conc}}^{#1}}
\newcommand{\Lip}{\mathcal{L}{\it ip}}

\newcommand{\dx}{{d_X}}

\newcommand{\mux}{{m_X}}
\newcommand{\muy}{{m_Y}}
\newcommand{\leb}{{\mathcal L^1}}

\DeclareMathOperator{\dis}{dis}

\title{Box distance and observable distance via optimal transport}
\author{Hiroki Nakajima}
\address{Institute for excellence in higher education, Tohoku University, Sendai 980-8576, Japan}
\email{hiroki.nakajima.a1@tohoku.ac.jp}
\date{\today}
\keywords{metric measure space, box distance, observable distance, optimal transport, concentration phenomenon, Gromov-Prohorov distance}
\thanks{The author was supported by JSPS KAKENHI Grant Number 19J10866}
\begin{document}
\maketitle
\begin{abstract}
On the set of all metric measure spaces, we have two important metrics, the box metric and the observable metric, both introduced by M. Gromov. We obtain the representation of these metrics by using transport plan. In addition, we prove the existence of optimal transport plans of these metrics.
\end{abstract}
\tableofcontents
\section{Introduction}

In the metric measure geometry, the box distance and the observable distance are two of the most important distances between two metric measure spaces. The box metric $\square$ is elementary and equivalent to the Gromov-Prohorov metric $d_{\rm GP}$. More precisely,  $d_{\rm GP}\le \square \le 2d_{\rm GP}$ holds \cite{Loh:dGP}.

The observable distance is defined based on the concentration of measure phenomenon. Its topology is weaker than the topology given by the box distance (see Proposition 5.5 in \cite{Shioya:mmg}). These distances are introduced by M. Gromov in \cite{Gmv:green}. 

In this paper, we obtain the following Theorem \ref{thm:boxOpt}. That is a simple representation of box distance $\square(X,Y)$ between two metric measure spaces $X$ and $Y$ by using optimal transport plans (optimal couplings). The definition of the box metric $\square$ is Definition \ref{dfn:box}. We assume that all metric spaces are complete and separable. We also assume that all metric measure spaces are equipped with Borel probability measures. The symbol $m_X$ denotes the measure of metric measure space $X$ and the symbol $d_X$ the metric of $X$. 
We define the {\it distortion} of a nonempty subset $S\subset X\times Y$ by
\[
\dis S:=\sup\set{|d_X(x,x')-d_Y(y,y') | \ ;\  (x,y),(x',y')\in S }
\]
and $\dis \emptyset:=\infty$.
\begin{thm}\label{thm:boxOpt}
\[
\square (X,Y)=\min_{\pi,S}\max\{1-\pi(S), \dis S \},
\]
where $\pi$ runs over all transport plans between $m_X$ and $m_Y$, and where $S$ runs over all closed subsets of $X\times Y$.
\end{thm}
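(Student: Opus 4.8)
The plan is to work through the standard parametrisation description of the box distance and to translate it into the language of couplings by pushing forward the Lebesgue measure. Recall that a \emph{parameter} of $X$ is a Borel map $\varphi\colon I\to X$ with $I=[0,1)$ and $\varphi_*\leb=m_X$, and that for parameters $\varphi,\psi$ of $X,Y$ one has $\square\le\varepsilon$ for the two pseudometrics $(s,t)\mapsto d_X(\varphi(s),\varphi(t))$ and $(s,t)\mapsto d_Y(\psi(s),\psi(t))$ exactly when some Borel set $I_0\subset I$ satisfies $\leb(I_0)\ge 1-\varepsilon$ and $|d_X(\varphi(s),\varphi(t))-d_Y(\psi(s),\psi(t))|\le\varepsilon$ for $s,t\in I_0$, with $\square(X,Y)$ the infimum of these box values over all $\varphi,\psi$. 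The bridge to transport plans is the pushforward: a pair $(\varphi,\psi)$ yields the plan $\pi=(\varphi,\psi)_*\leb$ between $m_X$ and $m_Y$, and conversely, since $X\times Y$ is complete and separable, every plan $\pi$ equals $h_*\leb$ for some parameter $h\colon I\to X\times Y$, so that $\varphi=\pr_X\circ h$ and $\psi=\pr_Y\circ h$ are parameters with $(\varphi,\psi)_*\leb=\pi$.

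With this in hand the two inequalities are routine. For $\square(X,Y)\le\max\{1-\pi(S),\dis S\}$, fix $\pi$ and a closed $S$, set $\varepsilon$ equal to this maximum, write $\pi=(\varphi,\psi)_*\leb$, and take $I_0=h^{-1}(S)$; then $\leb(I_0)=\pi(S)\ge 1-\varepsilon$, while $h(s),h(t)\in S$ for $s,t\in I_0$ forces $|d_X(\varphi(s),\varphi(t))-d_Y(\psi(s),\psi(t))|\le\dis S\le\varepsilon$, so $\square(X,Y)\le\varepsilon$. For the reverse inequality I would, given $\varepsilon>\square(X,Y)$, pick parameters with box value $<\varepsilon$ and a witnessing $I_0$, then set $\pi=(\varphi,\psi)_*\leb$ and $S=\overline{(\varphi,\psi)(I_0)}$; here $\pi(S)\ge\leb(I_0)\ge 1-\varepsilon$, and since $((x,y),(x',y'))\mapsto|d_X(x,x')-d_Y(y,y')|$ is continuous, passing to the closure does not change the supremum, so $\dis S\le\varepsilon$. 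Thus the infimum on the right equals $\square(X,Y)$.

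The substantive part is that this infimum is a minimum. I would take plans $\pi_n$ and closed sets $S_n$ with $\pi_n(S_n)\ge 1-\delta_n$ and $\dis S_n\le\delta_n$ for some $\delta_n\downarrow\varepsilon_0:=\square(X,Y)$. The set of transport plans with the fixed marginals $m_X,m_Y$ is tight, hence weakly relatively compact and closed (Prokhorov), so after a subsequence $\pi_n\to\pi$ weakly and $\pi$ is again a transport plan. Using tightness of $\{\pi_n\}\cup\{\pi\}$, I would fix an increasing family of compacta $K_j$ with $\inf_n\pi_n(K_j)\ge 1-1/j$ and $\pi(K_j)\ge 1-1/j$. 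For each $j$ the closed sets $S_n\cap K_j$ lie in the compact $K_j$ and subconverge in the Hausdorff distance (Blaschke selection); a diagonal argument produces one subsequence with $S_n\cap K_j\to T_j$ for all $j$ and $T_j\subset T_{j+1}$, and I set $S=\overline{\bigcup_j T_j}$. To bound the distortion, any $p,p'\in\bigcup_j T_j$ lie in a common $T_J$, hence are limits $p=\lim a_n$, $p'=\lim b_n$ with $a_n,b_n\in S_n\cap K_J\subset S_n$, so the value of $|d_X-d_Y|$ at $(a_n,b_n)$ is $\le\delta_n$ and continuity gives $\dis S\le\varepsilon_0$. To bound the measure, $S_n\cap K_j$ is eventually contained in each closed neighbourhood $\{d(\cdot,T_j)\le\eta\}$, so the portmanteau theorem gives $\limsup_n\pi_n(S_n\cap K_j)\le\pi(\{d(\cdot,T_j)\le\eta\})$, and letting $\eta\downarrow 0$ yields $\pi(T_j)\ge\limsup_n\pi_n(S_n\cap K_j)\ge 1-\varepsilon_0-1/j$; hence $\pi(S)\ge 1-\varepsilon_0$, and $(\pi,S)$ attains the minimum.

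The step I expect to be the main obstacle is precisely this simultaneous control of measure and distortion in the limit. The naive limit $S=\limsup_n S_n$ in the Kuratowski sense does not bound the distortion, since two limit points may be approximated from disjoint sets of indices $n$ and so need never occur together in a single $S_n$; the remedy is to force \emph{genuine} Hausdorff convergence on the compacta $K_j$ furnished by tightness, which is what allows any two points of the limit to be approximated by points of one and the same $S_n$. Reconciling this set convergence with the weak convergence $\pi_n\to\pi$ — using closed neighbourhoods and the portmanteau inequality in the correct direction — is the delicate bookkeeping the argument hinges on.
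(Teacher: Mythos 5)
Your proposal is correct, and its first half --- rewriting $\square(X,Y)$ as $\inf_{\pi,S}\max\{1-\pi(S),\dis S\}$ by pushing parameters forward to plans and, conversely, realizing any plan $\pi$ as $(\varphi,\psi)_*\mathcal{L}^1$ through a parameter of $(X\times Y,\pi)$ --- is exactly the paper's route (Lemmas \ref{lem:paraPlan}, \ref{lem:disBox} and Proposition \ref{prop:transportBox}). Where you genuinely diverge is the attainment step. The paper splits it in two: it first shows that $\pi\mapsto\dis\pi=\inf_S\max\{1-\pi(S),\dis S\}$ is $2$-Lipschitz with respect to the Prohorov metric (Lemmas \ref{lem:disIneq} and \ref{lem:disConti}), so weak compactness of $\Pi(m_X,m_Y)$ gives an optimal plan (Lemma \ref{lem:optBox1}); then, for that fixed $\pi$, it extracts from a minimizing sequence of closed sets a subsequence converging in the Kuratowski--Painlev\'e (weak Hausdorff) sense via Beer's compactness theorem (Theorem \ref{thm:weakHcpt}), and concludes using upper semicontinuity of the measure and lower semicontinuity of $\dis$ along that convergence (Lemmas \ref{lem:measLSC}, \ref{lem:disLSC}, \ref{lem:boxOptimalS}). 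You instead take a joint limit of a minimizing sequence $(\pi_n,S_n)$: a weak limit of the plans, and genuine Hausdorff limits $T_j$ of $S_n\cap K_j$ on tightness compacta $K_j$ (Blaschke selection plus a diagonal argument), with the measure bound recovered from the portmanteau inequality applied to closed $\eta$-neighbourhoods of the $T_j$. Both arguments are sound; yours relies only on classical tools (Prokhorov, Blaschke, portmanteau) and produces the optimal pair in one pass, at the cost of the tightness and diagonal bookkeeping, while the paper's two-step version isolates reusable quantitative lemmas (the Lipschitz estimate for $\dis$ on plans and the two semicontinuity statements) that it exploits again for the Prohorov and Eurandom distances. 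One correction of emphasis: the ``obstacle'' you flag --- that the Kuratowski \emph{upper} limit $\limsup_n S_n$ does not control distortion --- is real but is not an objection to the paper's method, because full Kuratowski--Painlev\'e convergence also includes the lower-limit condition, which guarantees that any two points of the limit set are approximated by points of one and the same $S_n$; that is precisely the property you engineer by hand through Hausdorff convergence on the $K_j$. Finally, both your argument and the paper's silently skirt the degenerate possibility that the limit set is empty (recall $\dis\emptyset=\infty$); this can occur only in the extreme case $\square(X,Y)=1$, where any singleton $S$ already witnesses the minimum, so nothing is lost.
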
 
Theorem \ref{thm:boxOpt} claims the existence of an optimal transport plan. 
This follows from the weak compactness of the set of all transport plans between $m_X$ and $m_Y$, and the continuity of the map $\pi \mapsto \inf_S\max\{1-\pi(S),\dis S\}$.
The existence of a minimizer of the function $S\mapsto\max\{1-\pi(S),\dis S\}$ also follows because this function is lower semi continuous with respect to weak Hausdorff convergence. The proof of Theorem \ref{thm:boxOpt} is given in Section \ref{sec:boxOpt}.

The definition of the Eurandom metric $d_{\rm Eur}$ is very similar to the right hand side of Theorem \ref{thm:boxOpt}. The optimal transports of the Eurandom metric $d_{\rm Eur}$ is explained in Section \ref{sec:Eur_opt}. The Eurandom metric $d_{\rm Eur}$ gives the topology of the box metric $\square$ and $d_{\rm Eur}\le 2d_{\rm GP}$ holds \cite{GPW}. However, there is no constant $C>0$ that satisfies $d_{\rm GP}\le Cd_{\rm Eur}$, because the Gromov-Prohorov metric $d_{\rm GP}$ is complete but the Eurandom metric $d_{\rm Eur}$ is not.
The relation to other similar metric is summarized in Subsection 2.3 in \cite{Sturm:sp}. Note that the Eurandom metric $d_{\rm Eur}$ is called the $L^0$-distortion metric in \cite{Sturm:sp}.

An application of Theorem \ref{thm:boxOpt} is to simplify the proof of the following theorem.
\begin{thm}[Chapter 3$\frac12_+$ in \cite{Gmv:green}]\label{thm:boxIsMetric}
The function $\square$ is a metric on the set of all isomorphism classes of metric measure spaces.
\end{thm}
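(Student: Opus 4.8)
The plan is to verify the four metric axioms directly from the representation in Theorem \ref{thm:boxOpt}, which reduces each axiom to a manipulation of transport plans and closed sets rather than the parametrized definition of $\square$. Non-negativity is immediate, since $\max\{1-\pi(S),\dis S\}\ge 0$ for every admissible pair $(\pi,S)$. For symmetry, I would push an optimal pair for $\square(X,Y)$ forward under the flip map $\sigma\colon X\times Y\to Y\times X$, $\sigma(x,y)=(y,x)$. Then $\sigma_*\pi$ is a transport plan between $m_Y$ and $m_X$, the set $\sigma(S)$ is closed with $(\sigma_*\pi)(\sigma(S))=\pi(S)$, and $\dis\sigma(S)=\dis S$ because $|d_X(x,x')-d_Y(y,y')|$ is symmetric in the two coordinate pairs; this gives $\square(Y,X)\le\square(X,Y)$, and the reverse inequality follows in the same way.

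For the identity of indiscernibles, one direction is easy: if $X$ and $Y$ are isomorphic via a measure-preserving isometry $f$, then the graph $S=\set{(x,f(x))}$ together with $\pi=(\id,f)_*m_X$ satisfies $\pi(S)=1$ and $\dis S=0$, so $\square(X,Y)=0$. The converse is where the minimum in Theorem \ref{thm:boxOpt} pays off: if $\square(X,Y)=0$, there is an \emph{actual} optimal pair $(\pi,S)$ with $\pi(S)=1$ and $\dis S=0$. From $\dis S=0$ I would read off that $S$ is the graph of an isometry $f$ between $\pr_X(S)$ and $\pr_Y(S)$: if $(x,y),(x,y')\in S$ then $d_Y(y,y')=d_X(x,x)=0$, so $y=y'$, and symmetrically in the other variable, while $d_Y(f(x),f(x'))=d_X(x,x')$ holds by $\dis S=0$. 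Since $\pi$ is concentrated on $S$ and has marginals $m_X$ and $m_Y$, the projections $\pr_X(S)$ and $\pr_Y(S)$ have full measure and $f$ is measure-preserving. Extending $f$ by uniform continuity to the closures of these full-measure sets yields the required isomorphism of metric measure spaces.

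The triangle inequality is the main obstacle, and I would handle it by the gluing lemma of optimal transport. Given optimal pairs $(\pi_{XY},S_{XY})$ for $\square(X,Y)$ and $(\pi_{YZ},S_{YZ})$ for $\square(Y,Z)$, both having $m_Y$ as a common marginal, glue them to a measure $\pi$ on $X\times Y\times Z$ and set $\pi_{XZ}:=(\pr)_*\pi$ for the projection onto $X\times Z$. Taking $\tilde S:=\set{(x,y,z)\ ;\ (x,y)\in S_{XY},\ (y,z)\in S_{YZ}}$ and $S:=\overline{\pr(\tilde S)}$, the triangle inequality for $|d_X(x,x')-d_Z(z,z')|$ through the intermediate point gives $\dis S\le \dis S_{XY}+\dis S_{YZ}$, where passing to the closure does not change the distortion because the distortion integrand is continuous. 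A union bound gives $1-\pi_{XZ}(S)\le (1-\pi_{XY}(S_{XY}))+(1-\pi_{YZ}(S_{YZ}))$, since $\pi_{XZ}(S)\ge\pi(\tilde S)$ and the complement of $\tilde S$ is contained in the union of the two lifted complements.

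Combining these two estimates with the elementary bound $\max\{a_1+a_2,b_1+b_2\}\le\max\{a_1,b_1\}+\max\{a_2,b_2\}$, and then taking the infimum over pairs on the left via Theorem \ref{thm:boxOpt}, yields $\square(X,Z)\le\square(X,Y)+\square(Y,Z)$. The two points needing care are the measurability of $\pr(\tilde S)$, which I circumvent by passing to its closure $S$, and the standard verification that the glued measure $\pi$ has the prescribed two-dimensional marginals $\pi_{XY}$ and $\pi_{YZ}$; both are routine once the gluing construction is in place.
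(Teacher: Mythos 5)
Your proposal is correct and follows essentially the same route as the paper: nondegeneracy is obtained by extracting, from an optimal pair $(\pi,S)$ with $\pi(S)=1$ and $\dis S=0$, a measure-preserving isometry between supports (this is exactly the content of the paper's Lemmas \ref{lem:boxNonDeg} and \ref{lem:disToEq}, which work with $\supp\pi\subset S$ and extend the map by a Cauchy-sequence argument in place of your uniform-continuity extension), and the triangle inequality uses the gluing lemma with the composed set $\pr_{13}((S_{XY}\times Z)\cap(X\times S_{YZ}))$ and the same distortion and union bounds as the paper's Theorem \ref{thm:boxTriangle}. The only cosmetic difference is that for the triangle inequality the paper notes that the infimum form (Proposition \ref{prop:transportBox}) suffices, whereas you invoke optimal pairs from Theorem \ref{thm:boxOpt}; this changes nothing of substance.
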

We obtained a new proof of the nondegeneracy of the box metric. The proof uses the existence of optimal transport, which makes the policy easy to understand.
The original proof in \cite{Gmv:green} uses the distance matrix distribution (cf. Theorem 4.10 in \cite{Shioya:mmg}).

For the observable distance, we also obtain a representation by using optimal transport.
See Theorem \ref{thm:optConc} for its representation. This also simplifies the proof of the triangle inequality and nondegeneracy for the observable metric. See Theorems \ref{thm:dconcNonDeg} and \ref{thm:obsTriangle} for details.

The box distance and the observable distance are extended to distances between two metric measure spaces such that the group acts on them \cite{NkjShioya:mm-gp}. The proof of the nondegeneracy of these distances is an application of the proof in the present paper.

\begin{ack}
The author would like to thank Professor Takashi Shioya for many helpful suggestions. He also thanks Dr. Daisuke Kazukawa and Dr. Shinichiro Kobayashi for many stimulating discussions.
\end{ack}

\section{Preliminaries}\label{preliminaries}

In this section, we present some basics of mm-space.
We refer to \cite{Gmv:green,Shioya:mmg} for more details about the contents of this section.
\begin{dfn}[\mmsp]
Let $(X,\dx)$ be a complete separable metric space and $m_X$ a Borel probability measure on $X$.
We call such a triple $(X,\dx,\mux)$ an {\it\mmsp}.
We sometimes say that $X$ is an \mmsp, for which the metric and measure of $X$ are respectively indicated by $\dx$ and $\mux$. 
\end{dfn}

We denote the Borel $\sigma$-algebra over $X$ by $\mathcal B_X$.
For any point $x\in X$, any two subsets $A,B\subset X$ and any real number $r\ge 0$, we define
\begin{align*}
d_X(x,A)&:=\inf\set{d_X(x,y)\mid y\in A} ,\\
U_r^{d_X}(A)&:=\set{y\in X\mid d_X(y,A)<r},
\end{align*}
where $\inf \emptyset:=\infty$. The symbol $U_r^{d_X}(A)$ is often omitted as $U_r(A)$. We remark that $U_r(\emptyset)=\emptyset$ for any real number $r\ge 0$.  The {\it diameter of $A$} is defined by $\diam A:=\sup_{x,y\in A}d_X(x,y)$ for $A\neq\emptyset$ and $\diam \emptyset:=0$.

Let $Y$ be a topological space and let $p:X\to Y$ be a measurable map from a measure space $(X,\mux)$ to a measurable space $(Y,\mathcal B_Y)$. {\it The push-forward $p_*m_X$ of $\mux$ by the map $p$} is defined as $p_*\mux(A):=\mux(p^{-1}(A))$ for any $A\in \mathcal B_Y$.

\begin{dfn}[support]
Let $\mu$ be a Borel measure on a topological space $X$.
We define the support $\supp\mu$ of $\mu$ by
\[
\supp\mu:=\{x\in X\mid \text{ $m_X(U)>0$ for any open neighborhood $U$ of $x$} \}.
\]
\end{dfn}
\begin{prop}
Let $X$ and $Y$ be two topological spaces and let $f\colon X\to Y$ be a continuous map. If a Borel measure $\mu$ on $X$ satisfies 
\[
\mu(X\setminus \supp\mu)=0,
\]
then we have
\[
\supp f_*\mu=\overline{f(\supp\mu)}.
\]
\end{prop}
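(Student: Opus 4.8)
The plan is to prove the set equality $\supp f_*\mu=\overline{f(\supp\mu)}$ by establishing the two inclusions separately. Throughout I would rely on the elementary fact that the support of any Borel measure is closed: its complement is the set of points admitting an open neighborhood of measure zero, which is a union of open sets and therefore open. In particular $\supp f_*\mu$ is closed, and $\overline{f(\supp\mu)}$ is closed by definition, so it suffices to compare these two closed sets.

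For the inclusion $\overline{f(\supp\mu)}\subseteq\supp f_*\mu$, since the right-hand side is closed it is enough to show $f(\supp\mu)\subseteq\supp f_*\mu$. I would fix a point $x\in\supp\mu$ and an arbitrary open neighborhood $V$ of $f(x)$. By continuity of $f$, the preimage $f^{-1}(V)$ is an open neighborhood of $x$, so $\mu(f^{-1}(V))>0$ by the definition of support, and hence $f_*\mu(V)=\mu(f^{-1}(V))>0$. Since $V$ was arbitrary, $f(x)\in\supp f_*\mu$. I note that this inclusion does not use the hypothesis $\mu(X\setminus\supp\mu)=0$.

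For the reverse inclusion $\supp f_*\mu\subseteq\overline{f(\supp\mu)}$, I would argue by contraposition. Given $y\notin\overline{f(\supp\mu)}$, the open set $V:=Y\setminus\overline{f(\supp\mu)}$ is a neighborhood of $y$. Since $\supp\mu\subseteq f^{-1}\bigl(\overline{f(\supp\mu)}\bigr)$, taking complements yields $f^{-1}(V)\subseteq X\setminus\supp\mu$, whence $f_*\mu(V)=\mu(f^{-1}(V))\le\mu(X\setminus\supp\mu)=0$. Thus $y$ has a neighborhood of zero $f_*\mu$-measure, so $y\notin\supp f_*\mu$. This is precisely the step where the hypothesis enters.

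The only delicate point, which I expect to be a matter of care rather than genuine difficulty, is this last inclusion: the hypothesis $\mu(X\setminus\supp\mu)=0$ must be invoked exactly once, to pass from ``$f^{-1}(V)$ avoids the support'' to ``$f^{-1}(V)$ is $\mu$-null.'' Without it a measure could place positive mass outside its own support (as happens for certain measures on non-metrizable spaces), and the inclusion would break down; every other step is a routine consequence of continuity and the definition of support.
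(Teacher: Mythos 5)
Your proposal is correct and follows essentially the same route as the paper's proof: the inclusion $\overline{f(\supp\mu)}\subseteq\supp f_*\mu$ via continuity plus closedness of the support, and the reverse inclusion via the computation $f_*\mu\bigl(Y\setminus\overline{f(\supp\mu)}\bigr)=\mu\bigl(X\setminus f^{-1}(\overline{f(\supp\mu)})\bigr)\le\mu(X\setminus\supp\mu)=0$, which is exactly your contrapositive step written directly. Your explicit remark on where the hypothesis $\mu(X\setminus\supp\mu)=0$ enters, and that it is genuinely needed, matches the paper's own remark following the proposition.
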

\begin{proof}
Since
\begin{align*}
f_*\mu(Y\setminus \overline{f(\supp\mu)})&=\mu(X\setminus f^{-1}(\overline{f(\supp\mu)}))\\
&\le \mu(X\setminus \supp\mu)=0,
\end{align*}
we have $\supp f_*\mu\subset \overline{f(\supp\mu)}$.

Next, let us prove
\begin{equation}\label{prop:push:goal}
f(\supp \mu)\subset \supp f_*\mu.
\end{equation}
Take any $y\in f(\supp\mu)$ and any open neighborhood $U$ of $y$.
Since $y\in f(\supp\mu)$, there exists $x\in\supp\mu$ such that $y=f(x)$.
The set $f^{-1}(U)$ is an open neighborhood of $x\in\supp\mu$ because $f$ is continuous. Then we have
$
f_*\mu(U)= \mu(f^{-1}(U))>0.
$
This implies that $y\in\supp f_*\mu$ and we obtain \eqref{prop:push:goal}.
Since the set $\supp f_*\mu$ is closed, we have
\[
\overline{f(\supp\mu)}\subset \supp f_*\mu.
\]
This completes the proof.
\end{proof}
\begin{rem}
Let $\mu$ be a Borel measure $\mu$ on a topological space $X$. We have $\mu(X\setminus \supp \mu)=0$ if $X$ is second countable. Even if we assume that $\mu$ is inner regular, the equation $\mu(X\setminus \supp \mu)=0$ holds.
\end{rem}
\begin{dfn}[mm-isomorphism]
Two \mmsp s $X$ and $Y$ are said to be {\it mm-isomorphic} if there exists an isometry $f:\supp\mux\to\supp\muy$ such that $f_*\mux=\muy$, where $\supp\mux$ is the support of $\mux$.
Such an isometry $f$ is called an {\it mm-isomorphism}.
The mm-isomorphism relation is an equivalence relation on the class of mm-spaces.
Denote by $\mathcal X$ the set of mm-isomorphism classes of \mmsp s.
\end{dfn}
\begin{dfn}[Lipschitz order]\label{def:Lip_ord}
Let $X$ and $Y$ be two \mmsp s.
We say that $X$ {\it dominates} $Y$ and write $Y\prec X$ if there exists a 1-Lipschitz map $f:X\to Y$ satisfying
\[
f_*\mux=\muy.
\]
We call the relation $\prec$ on $\mathcal X$ the {\it Lipschitz order}.
\end{dfn}
\begin{prop}[Proposition 2.11 in \cite{Shioya:mmg}]\label{prop:lipPartialOrder}
The Lipschitz order $\prec$ is a partial order relation on $\mathcal X$.
\end{prop}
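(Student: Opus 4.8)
The plan is to verify the three defining properties of a partial order relation on $\mathcal X$: reflexivity, transitivity, and antisymmetry. Reflexivity is immediate, since the identity $\id\colon X\to X$ is $1$-Lipschitz with $\id_*\mux=\mux$, so $X\prec X$. Transitivity is equally routine: if $Z\prec Y$ and $Y\prec X$ are witnessed by $1$-Lipschitz maps $g\colon Y\to Z$ and $f\colon X\to Y$ with $g_*\muy=m_Z$ and $f_*\mux=\muy$, then $g\circ f\colon X\to Z$ is $1$-Lipschitz and $(g\circ f)_*\mux=g_*\muy=m_Z$, so $Z\prec X$. The entire difficulty lies in antisymmetry, which is where I would concentrate my effort.

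For antisymmetry, suppose $Y\prec X$ and $X\prec Y$, realized by $1$-Lipschitz maps $f\colon X\to Y$ and $g\colon Y\to X$ with $f_*\mux=\muy$ and $g_*\muy=\mux$. I would form the self-map $h:=g\circ f\colon X\to X$, which is $1$-Lipschitz and measure-preserving, $h_*\mux=\mux$. Since $f$ and $g$ are $1$-Lipschitz, for all $x,x'\in X$ one has the sandwich
\[
\dx(h(x),h(x'))\le \dy(f(x),f(x'))\le \dx(x,x').
\]
The goal is to force equality throughout, almost everywhere; the isometry property of $f$ then falls out of the left inequality being an equality.

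The key step, and the main obstacle since $\dx$ need not be integrable, is to show $\dx(h(x),h(x'))=\dx(x,x')$ for $\mux\otimes\mux$-a.e.\ $(x,x')$. I would fix a bounded strictly increasing function $F\colon[0,\infty)\to[0,\infty)$ (for instance $F(t)=t/(1+t)$) and set $\phi(x,x'):=F(\dx(x,x'))$. Because $H:=h\times h$ preserves $\mux\otimes\mux$ and $\phi\circ H(x,x')=F(\dx(h(x),h(x')))\le\phi(x,x')$ by monotonicity of $F$ and the sandwich, integration gives
\[
\int \phi\,d(\mux\otimes\mux)=\int \phi\circ H\,d(\mux\otimes\mux)\le\int\phi\,d(\mux\otimes\mux).
\]
Since $\phi$ is bounded, hence integrable, and $\phi\circ H\le\phi$ with equal integrals, I conclude $\phi\circ H=\phi$ a.e., and strict monotonicity of $F$ yields $\dx(h(x),h(x'))=\dx(x,x')$ a.e. Combined with the sandwich, this forces $\dy(f(x),f(x'))=\dx(x,x')$ for $\mux\otimes\mux$-a.e.\ $(x,x')$.

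Finally I would upgrade this almost-everywhere statement to an everywhere statement on the support and promote $f$ to an mm-isomorphism. The set $\{(x,x')\mid \dy(f(x),f(x'))=\dx(x,x')\}$ is closed by continuity of $f$ and has full measure, so it contains $\supp(\mux\otimes\mux)=\supp\mux\times\supp\mux$; hence $f$ restricts to an isometric embedding of $\supp\mux$ into $Y$. By the push-forward support proposition, $\overline{f(\supp\mux)}=\supp\muy$, and since $\supp\mux$ is complete its isometric image $f(\supp\mux)$ is complete, hence closed, so $f(\supp\mux)=\supp\muy$. Thus $f\colon\supp\mux\to\supp\muy$ is a surjective isometry with $f_*\mux=\muy$, that is, an mm-isomorphism, and $X$ and $Y$ coincide in $\mathcal X$.
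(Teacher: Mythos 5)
Your proof is correct, and it is worth noting that the paper itself contains no argument for this proposition: it is quoted with a citation to Proposition 2.11 of Shioya's book, so the relevant comparison is with the proof given there. Reflexivity and transitivity are immediate in any treatment, and you handle them the same way. For antisymmetry, the proof in the cited reference also reduces to showing that the $1$-Lipschitz, measure-preserving self-map $h=g\circ f$ is distance-preserving on $\supp\mux$, but it establishes this dynamically: one applies Poincar\'e's recurrence theorem to $h\times h$ on $(X\times X,\mux\otimes\mux)$ near a pair of points of the support and uses the fact that $d_X(h^n(x),h^n(x'))$ is non-increasing in $n$ to conclude $d_X(h(x),h(x'))\ge d_X(x,x')-C\ep$ for every $\ep>0$. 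Your replacement of this recurrence step by an integration argument --- composing the distance with a bounded strictly increasing $F$, noting $(h\times h)_*(\mux\otimes\mux)=\mux\otimes\mux$, and concluding that the nonnegative function $\phi-\phi\circ H$ has zero integral and hence vanishes a.e. --- is a genuinely different mechanism, and arguably more elementary, since it needs only the change-of-variables formula for push-forwards rather than the recurrence theorem; the boundedness of $F$ is exactly what circumvents the non-integrability of $d_X$. The endgame (a closed set of full measure contains $\supp(\mux\otimes\mux)=\supp\mux\times\supp\mux$; completeness of $\supp\mux$ makes $f(\supp\mux)$ closed; the paper's push-forward-support proposition then gives $f(\supp\mux)=\supp\muy$) is the standard one and is carried out correctly. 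The two facts you use silently --- that $\supp(\mux\otimes\mux)=\supp\mux\times\supp\mux$ and that $\mux(X\setminus\supp\mux)=0$ --- both hold because $X$ is separable, the latter being exactly the hypothesis under which the paper's support proposition applies.
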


\begin{dfn}[Transport plan]\label{dfn:trans}
Let $\mu$ and $\nu$ be two Borel probability measures on $\R$. We say that a Borel probability measure $\pi$ on $\R^2$ is a transport plan between $\mu$ and $\nu$ if we have $(\pr_1)_*\pi=\mu$ and $(\pr_2)_*\pi=\nu$, where $\pr_1$ and $\pr_2$ are the first and second projections respectively. We denote by $\Pi(\mu,\nu)$ the set of transport plans between $\mu$ and $\nu$.
\end{dfn}


\subsection{Box distance and observable distance}
In this section, we briefly describe the box distance function and the observable distance function.
\begin{dfn}[Parameter]
Let $I:=[0,1)$ and let $\mathcal L^1$ be the Lebesgue measure on $I$.
Let $X$ be a topological space equipped with a Borel probability measure $\mux$.
A map $\varphi:I\to X$ is called a {\it parameter of $X$} if $\varphi$ is a Borel measurable map such that
\[
\varphi_*\mathcal L^1=\mux.
\]
\end{dfn}
\begin{dfn}[Pseudo-metric]
A {\it pseudo-metric $\rho$ on a set $S$} is defined to be a function $\rho:S\times S\to[0,\infty)$ satisfying
\begin{enumerate}
\item $\rho(x,x)=0,$
\item $\rho(y,x)=\rho(x,y),$
\item $\rho(x,z)\leq\rho(x,y)+\rho(y,z)$
\end{enumerate}
for any $x,y,z\in S$.
\end{dfn}

If $\rho$ is a metric, $\rho(x,y)=0$ implies $x=y$ for any two points $x,y\in S$. However, a pseudo-metric is not necessary to satisfy this condition. 
\begin{dfn}[Box distance]\label{dfn:box}
For two pseudo-metrics $\rho_1$ and $\rho_2$ on $I:=[0,1)$, we define $\square(\rho_1,\rho_2)$ to be the infimum of $\varepsilon\geq 0$ such that there exists a Borel subset $I_0\subset I$ satisfying
\begin{enumerate}
\item $|\rho_1(s,t)-\rho_2(s,t)|\leq\varepsilon$ for any $s,t\in I_0$,
\item $\mathcal L^1(I_0)\geq 1-\varepsilon$.
\end{enumerate}
We define the {\it box distance $\square(X,Y)$ between two \mmsp s $X$ and $Y$} to be the infimum of $\square(\varphi^*d_X,\psi^*d_Y)$, where $\varphi:I\to X$ and $\psi:I\to Y$ run over all parameters of $X$ and $Y$, respectively, and where $\varphi^*\dx(s,t):=\dx(\varphi(s),\varphi(t))$ for $s,t\in I$.
\end{dfn}
\begin{dfn}[Prohorov metric]
The Prohorov metric $\dP$ is defined by
\[
\dP(\mu,\nu):=\inf\set{\ep>0\mid \mu(U_\ep(A))\ge \nu(A)-\ep \text{ for any Borel set $A\subset X$}}
\]
for any two Borel probability measures $\mu$ and $\nu$ on a metric space $X$.
\end{dfn}
\begin{dfn}[Ky Fan metric]
Let $(X,\mu)$ be a measure space. For two $\mu$-measurable maps $f,g:X\to\R$, we define the {\it Ky Fan metric} $d_{\rm KF}=d_{\rm KF}^\mu$ by
\[
d_{\rm KF}^\mu(f,g):=\inf\set{\ep\ge 0\mid \mu(\set{x\in X\mid |f(x)-g(x)|>\ep})\le\ep}.
\]
\end{dfn}


\begin{dfn}[Observable distance]

For a parameter $\varphi$ of an mm-space $X$, we define 
\[
\Lip_1(X):=\set{f:X\to \R\mid \text{ $f$ is $1$-Lipschitz}}
\]
and 
\[
\varphi^*\Lip_1(X):=\set{f\circ \varphi\mid f\in \Lip_1(X)}.
\]
The Hausdorff distance function $d_{\rm H}^{\rm KF}$ is defined with respect to $d_{\rm KF}^{\cL^1}$.
We define the {\it observable distance} $\dconc$ between two mm-spaces $X$ and $Y$ by
\[
\dconc(X,Y):=\inf_{\varphi,\psi} d_{\rm H}^{\rm KF}(\varphi^*\Lip_1(X),\psi^*\Lip_1(Y)),
\]
where $\varphi:I:=[0,1)\to X$ and $\psi:I\to Y$ run over all parameters of $X$ and $Y$ respectively.
\end{dfn}

\section{Prohorov distance via optimal transport}
In this section, our goal is to prove the following Theorem \ref{thm:prohOpt}.
The proof of Theorem \ref{thm:prohOpt} is similar to the proof of Theorem \ref{thm:boxOpt}, so it is used as a reference in the proof of Theorem \ref{thm:boxOpt}. Note that Theorem \ref{thm:prohOpt} will not be used in subsequent sections.
\begin{thm}\label{thm:prohOpt}
Let $X$ be a metric space and $\mu, \nu$ be two Borel probability measures on $X$.
Then we have
\[
\dP(\mu,\nu)=\min_{\pi\in\Pi(\mu,\nu), S\subset X\times X} \max\{\dis_\Delta S,1-\pi(S)\},
\]
where $S\subset X\times X$ runs over all closed sets, and where we put
\[
\dis_\Delta(S):=\sup\{d_X(x,y)\mid (x,y)\in S\}
\]
for a nonempty subset $S\subset X\times X$, and $\dis_\Delta \emptyset:=0$.
\end{thm}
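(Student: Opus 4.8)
The plan is to prove the two inequalities $\dP(\mu,\nu)\le R$ and $R\le\dP(\mu,\nu)$ separately, where I abbreviate the right-hand side by $R:=\inf_{\pi,S}\max\{\dis_\Delta S,1-\pi(S)\}$ and, for a fixed plan $\pi$, write $F(\pi):=\inf_S\max\{\dis_\Delta S,1-\pi(S)\}$. First I would dispose of the inner minimization over $S$. Since $\dis_\Delta S\le\varepsilon$ is equivalent to $S\subset S_\varepsilon:=\set{(x,y)\in X\times X\mid d_X(x,y)\le\varepsilon}$, enlarging a candidate $S$ to the closed sublevel set $S_\varepsilon$ only increases $\pi(S)$, so
\[
F(\pi)=\inf\set{\varepsilon\ge 0\mid \pi(\set{(x,y)\mid d_X(x,y)>\varepsilon})\le\varepsilon}.
\]
The map $\varepsilon\mapsto\pi(\set{d_X>\varepsilon})$ is right-continuous and nonincreasing, so this infimum is attained, at $\varepsilon^*=F(\pi)$; hence $S=S_{\varepsilon^*}$ realizes the inner minimum, which is therefore a genuine minimum for every $\pi$.

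For $\dP(\mu,\nu)\le R$ I take any plan $\pi$ and closed $S$, put $\varepsilon:=\max\{\dis_\Delta S,1-\pi(S)\}$, and verify the Prohorov inequality. For any Borel $A\subset X$ and any $\varepsilon'>\varepsilon$, I split $\nu(A)=\pi(X\times A)=\pi((X\times A)\cap S)+\pi((X\times A)\setminus S)$. Every $(x,y)\in(X\times A)\cap S$ has $y\in A$ and $d_X(x,y)\le\dis_\Delta S\le\varepsilon$, so $x\in U_{\varepsilon'}(A)$ and thus $\pi((X\times A)\cap S)\le\mu(U_{\varepsilon'}(A))$; the remaining term is at most $1-\pi(S)\le\varepsilon<\varepsilon'$. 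Hence $\nu(A)\le\mu(U_{\varepsilon'}(A))+\varepsilon'$ for all $A$, giving $\dP(\mu,\nu)\le\varepsilon'$ and, letting $\varepsilon'\downarrow\varepsilon$, $\dP(\mu,\nu)\le\varepsilon$. Taking the infimum over $(\pi,S)$ yields $\dP(\mu,\nu)\le R$.

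The reverse inequality $R\le\dP(\mu,\nu)$ is the crux and the step I expect to be the main obstacle, since here I must manufacture a single transport plan out of the family of scalar inequalities defining $\dP$. For each $\varepsilon>\dP(\mu,\nu)$ the Prohorov condition $\nu(A)\le\mu(U_\varepsilon(A))+\varepsilon$ holds for all Borel $A$; this is exactly the feasibility (Hall--marriage, equivalently max-flow--min-cut) condition that guarantees a coupling routing all but mass $\varepsilon$ along pairs at distance $\le\varepsilon$. I would establish this Strassen-type existence by approximating $\mu$ and $\nu$ with finitely supported measures on a Borel partition into cells of diameter $<\delta$, solving the resulting finite transport/flow problem (whose feasibility is the discretized Prohorov inequality), and passing to a weak limit; after replacing $X$ by the closure of $\supp\mu\cup\supp\nu$ in its completion, $X$ is Polish and $\mu,\nu$ are tight, so $\Pi(\mu,\nu)$ is weakly compact by Prohorov's compactness theorem and the limit plan exists. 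Alternatively one may invoke Kantorovich duality for the lower-semicontinuous cost $\mathbf{1}_{\{d_X(x,y)>\varepsilon\}}$, whose dual value is $\sup_A(\nu(A)-\mu(U_\varepsilon(A)))\le\varepsilon$. Either route produces, for every $\varepsilon>\dP(\mu,\nu)$, a plan $\pi_\varepsilon$ with $\pi_\varepsilon(S_\varepsilon)\ge 1-\varepsilon$, i.e. $F(\pi_\varepsilon)\le\varepsilon$.

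Finally I let $\varepsilon\downarrow\varepsilon_0:=\dP(\mu,\nu)$ along a sequence and extract, by weak compactness of $\Pi(\mu,\nu)$, a weak cluster point $\pi^*$ of the $\pi_\varepsilon$. For any fixed $\eta>\varepsilon_0$ one has $S_\varepsilon\subset S_\eta$ once $\varepsilon\le\eta$, so $\pi_\varepsilon(S_\eta)\ge\pi_\varepsilon(S_\varepsilon)\ge 1-\varepsilon$; since $S_\eta$ is closed, the portmanteau theorem gives $\pi^*(S_\eta)\ge\limsup_\varepsilon\pi_\varepsilon(S_\eta)\ge 1-\varepsilon_0$. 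Letting $\eta\downarrow\varepsilon_0$ and using continuity from above, $\pi^*(S_{\varepsilon_0})\ge 1-\varepsilon_0$, so $F(\pi^*)\le\varepsilon_0=\dP(\mu,\nu)$ and hence $R\le\dP(\mu,\nu)$. Together with the first inequality this proves equality; moreover the outer minimum is attained at $\pi^*$ and, by the first paragraph, the inner minimum is attained at the sublevel set $S_{\varepsilon_0}$, so both infima are minima. The only place real work is needed is the coupling construction of the third paragraph — turning the Prohorov inequalities into an actual plan (Strassen's theorem) — while the remaining steps are soft compactness and semicontinuity arguments.
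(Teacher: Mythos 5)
Your proposal is correct, and it reaches the conclusion by a genuinely different route than the paper. The paper, like you, takes Strassen's theorem (its Theorem \ref{thm:Strassen}) as the source of the identity with an infimum, so your reliance on it in the third paragraph is not a gap relative to the paper; the paper's actual work is entirely in upgrading the infimum to a minimum, and this is where you diverge. For the inner minimization the paper proves lower semicontinuity of $S\mapsto\dis_\Delta S$ and $S\mapsto -\pi(S)$ under Kuratowski (weak Hausdorff) convergence and invokes Beer's compactness theorem for sequences of closed sets, whereas you observe that $\dis_\Delta S\le\varepsilon$ is equivalent to $S\subset S_\varepsilon:=\{d_X\le\varepsilon\}$, so the inner infimum collapses to a one-variable problem $\inf_\varepsilon\max\{\varepsilon,\pi(d_X>\varepsilon)\}$ whose attainment follows from right-continuity and monotonicity of the distribution function --- markedly more elementary. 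For the outer minimization the paper proves the Lipschitz estimate $|\dis_\Delta\pi-\dis_\Delta\pi'|\le\dP(\pi,\pi')$ (its Lemmas \ref{lem:disDiagonalIneq} and \ref{lem:disDiagonalConti}) and combines it with weak compactness of $\Pi(\mu,\nu)$, whereas you take a minimizing family $\pi_\varepsilon$, extract a weak cluster point, and use portmanteau on the closed sets $S_\eta$ plus continuity from above. Both are soft compactness arguments; the paper's buys a quantitative stability statement of independent use, and yours avoids introducing any metric on the space of plans. The real trade-off to note: your sublevel-set reduction is special to the diagonal distortion $\dis_\Delta$, where a maximal closed set of distortion $\le\varepsilon$ exists; it does not transfer to the box-distance distortion $\dis S$ of Theorem \ref{thm:boxOpt}, which constrains pairs of points and admits no such maximal set. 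That is precisely why the paper develops the heavier Kuratowski-convergence machinery here --- it is reused verbatim in Section \ref{sec:boxOpt} --- so your shorter proof of Theorem \ref{thm:prohOpt} would not by itself simplify the paper's main theorem.
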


Theorem \ref{thm:prohOpt} claims the existence of a minimizer for $\pi$ and $S$ in the following Theorem \ref{thm:Strassen}.
\begin{thm}[Strassen's theorem]\label{thm:Strassen}
Let $X$ be a metric space and $\mu, \nu$ be two Borel probability measures on $X$.
Then we have
\[
\dP(\mu,\nu)=\inf_{\pi\in\Pi(\mu,\nu), S\subset X\times X} \max\{\dis_\Delta S,1-\pi(S)\},
\]
where $S\subset X\times X$ runs over all closed sets.
\end{thm}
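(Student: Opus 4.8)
Throughout write $R:=\inf_{\pi\in\Pi(\mu,\nu),\,S}\max\{\dis_\Delta S,1-\pi(S)\}$ for the right-hand side, the infimum being taken over $\pi\in\Pi(\mu,\nu)$ and over closed $S\subset X\times X$. The plan is to establish the two inequalities $\dP(\mu,\nu)\le R$ and $R\le\dP(\mu,\nu)$ separately. The first is elementary and amounts to reading a Prohorov estimate off a given coupling; the second is the genuine content of Strassen's theorem and requires one to construct a coupling that places almost all of its mass near the diagonal.

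For the inequality $\dP(\mu,\nu)\le R$, I would fix any $\pi\in\Pi(\mu,\nu)$ and any closed $S$, put $\ep:=\max\{\dis_\Delta S,1-\pi(S)\}$, and fix an arbitrary $\ep'>\ep$, so that $d_X(x,y)<\ep'$ for every $(x,y)\in S$ and $\pi(S)>1-\ep'$. For a Borel set $A\subset X$ I would split $\nu(A)=\pi(\pr_2^{-1}(A))$ into its parts inside and outside $S$: the outside part is at most $1-\pi(S)<\ep'$, while for $(x,y)\in S$ with $y\in A$ one has $d_X(x,A)\le d_X(x,y)<\ep'$, so the inside part is carried by points whose first coordinate lies in $U_{\ep'}(A)$ and is therefore bounded by $\mu(U_{\ep'}(A))$. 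This gives $\nu(A)<\mu(U_{\ep'}(A))+\ep'$ for every Borel $A$, hence $\dP(\mu,\nu)\le\ep'$; letting $\ep'\downarrow\ep$ and taking the infimum over $\pi$ and $S$ yields $\dP(\mu,\nu)\le R$.

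For the inequality $R\le\dP(\mu,\nu)$, the observation is that $\dP(\mu,\nu)<\ep$ supplies the Strassen hypothesis $\nu(A)\le\mu(U_\ep(A))+\ep$ for every Borel $A$; it then suffices to produce $\pi\in\Pi(\mu,\nu)$ with $\pi(\{d_X\le\ep\})\ge 1-\ep$, since the closed set $S:=\{(x,y)\mid d_X(x,y)\le\ep\}$ then satisfies $\dis_\Delta S\le\ep$ and $1-\pi(S)\le\ep$, giving $R\le\ep$. I would construct such a $\pi$ in three steps. First, discretize: after restricting to a compact set carrying all but an arbitrarily small fraction of each measure, I would partition it into finitely many Borel cells of diameter less than a small $\delta$, collapse $\mu,\nu$ onto cell representatives, and check that the Strassen hypothesis survives with $\ep$ enlarged by $O(\delta)$ and the discarded mass folded into the tolerance. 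Second, on this finite problem the required object is a feasible flow in the bipartite network whose edges join representatives at distance at most $\ep+O(\delta)$; the Strassen hypothesis is precisely the defect-Hall condition under which the max-flow min-cut theorem yields a flow carrying all but $\ep+O(\delta)$ of the mass, which I then lift to a coupling of $\mu$ and $\nu$ by gluing within cells, at a further cost of $O(\delta)$ in the distances. Third, let $\delta\downarrow 0$: the resulting couplings $\pi_n$ all have marginals $\mu,\nu$ and hence form a tight family by Prohorov's theorem, and a weak subsequential limit $\pi$ satisfies, for each fixed $\ep>\dP(\mu,\nu)$, the estimate $\pi(\{d_X\le\ep\})\ge\limsup_n\pi_n(\{d_X\le\ep\})\ge 1-\dP(\mu,\nu)$ by upper semicontinuity of mass on the closed set $\{d_X\le\ep\}$. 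This forces $R\le\ep$ for every $\ep>\dP(\mu,\nu)$, hence $R\le\dP(\mu,\nu)$.

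The main obstacle is the second inequality, and inside it the translation between the measure-theoretic Strassen hypothesis and the combinatorial flow statement: the discretization must be arranged so that the defect-Hall condition follows from the Prohorov inequality uniformly in $\delta$, and the boundary losses must be absorbed cleanly as $\delta\downarrow 0$—both the gap between the open neighborhood $U_\ep(A)$ used by $\dP$ and the closed $\ep$-condition used by the flow, and the $O(\delta)$ errors from collapsing and re-expanding cells. Controlling the weak limit, in particular ruling out any escape of mass to infinity, is exactly where completeness and separability of $X$ enter, through the tightness of the family of couplings with fixed marginals.
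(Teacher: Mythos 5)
There is nothing in the paper to compare your argument against: the paper states this result as Strassen's classical theorem and uses it as a black box, its own contribution here (Theorem \ref{thm:prohOpt}) being only the promotion of the infimum to a minimum via compactness and semicontinuity of $\pi\mapsto\dis_\Delta\pi$ and $S\mapsto\max\{\dis_\Delta S,1-\pi(S)\}$. So your proposal must stand on its own, and what you have written is essentially the classical textbook proof (Dudley's). The easy direction is complete and correct: from a coupling $\pi$ and a closed $S$ with $\max\{\dis_\Delta S,1-\pi(S)\}=\ep$ you extract $\nu(A)\le\mu(U_{\ep'}(A))+\ep'$ for every Borel $A$ and every $\ep'>\ep$, which is exactly the paper's (one-sided) definition of $\dP$. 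The hard direction has the right skeleton --- Ulam tightness to reduce to a compact set (this is where the paper's standing completeness and separability assumption enters), discretization into cells of diameter less than $\delta$, the defect form of Hall's theorem / max-flow min-cut on the finite bipartite graph whose edges join representatives at distance at most $\ep+O(\delta)$, lifting the discrete flow back through the cells, and a weak subsequential limit controlled by the portmanteau inequality on the closed set $\{(x,y)\mid d_X(x,y)\le\ep\}$ --- and no step in it fails: the Prohorov hypothesis does transfer to the discretized measures with $\ep$ enlarged to $\ep+2\delta$ in distance and by the discarded mass in measure, and the min-cut computation then yields a partial matching of mass at least $1-\ep-O(\delta)$ supported on short edges.

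Two places should be tightened if you write this out in full, since they are where the actual content sits and your text only gestures at them. First, the final portmanteau step should fix an intermediate $\ep_0$ with $\dP(\mu,\nu)<\ep_0<\ep$: the couplings $\pi_n$ concentrate on sets of the form $\{d_X\le\ep_0+O(\delta_n)\}$, which are contained in the closed set $\{d_X\le\ep\}$ only for $n$ large; your assertion that $\limsup_n\pi_n(\{d_X\le\ep\})\ge 1-\dP(\mu,\nu)$ silently uses this reindexing. Second, max-flow min-cut produces only a sub-coupling whose marginals are dominated by the discretized measures; to invoke Prohorov compactness and pass marginals to the limit you must first complete it to a genuine element of $\Pi(\mu,\nu)$ by coupling the residual masses arbitrarily (their totals agree after the bookkeeping) and by handling the mass discarded outside the compact set. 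These are routine repairs, and with them your argument is a correct, self-contained proof of a statement the paper leaves entirely to the literature.
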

First, we show the existence of an optimal transport plan $\pi$.
In preparation, we introduce the following definition.
\begin{dfn}[Distorsion for transport plan]\label{dfn:disDelta}
Let $\pi$ be a Borel probability measure on a metric space $X$. We define
\[
\dis_\Delta \pi:=\inf_{S\subset X\times X} \max\{\dis_{\Delta}S,1-\pi(S)\},
\]
where $S$ runs over all closed subsets over $X\times X$. 
\end{dfn}
In order to show the existence of an optimal transport plan $\pi$, it is sufficient to prove the lower semicontinuity of the map $\pi\mapsto \dis_\Delta \pi$. By Lemma \ref{lem:disDiagonalConti} below, we see that this map is continuous.
\begin{lem}\label{lem:disDiagonalIneq}
Let $X$ be a metric space. We assume that the product space $X\times X$ is equipped with the $l^1$-metric.
Then we have
\[
\dis_\Delta U_t(S)\le \dis_\Delta S+t.
\]
\end{lem}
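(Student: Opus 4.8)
The plan is to prove the inequality by a pointwise estimate on $U_t(S)$ followed by passing to the supremum. I would first dispose of the trivial case $U_t(S)=\emptyset$: here $\dis_\Delta U_t(S)=\dis_\Delta\emptyset=0$, and the claim holds because the right-hand side $\dis_\Delta S+t$ is nonnegative. So assume $U_t(S)\neq\emptyset$; since $U_t(\emptyset)=\emptyset$ by the convention recorded in Section~\ref{preliminaries}, this forces $S\neq\emptyset$, and thus $\dis_\Delta S=\sup_{(a,b)\in S}d_X(a,b)$.

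Next I would fix an arbitrary $(x,y)\in U_t(S)$ and estimate $d_X(x,y)$. Unwinding the definition of the open $t$-neighborhood with respect to the $l^1$-metric on $X\times X$, there is a point $(a,b)\in S$ with
\[
d_X(x,a)+d_X(y,b)<t.
\]
The central idea is to bridge $x$ to $y$ through the pair $(a,b)\in S$: the triangle inequality in $X$ gives
\[
d_X(x,y)\le d_X(x,a)+d_X(a,b)+d_X(b,y).
\]
The middle term is at most $\dis_\Delta S$ because $(a,b)\in S$, and the sum of the two outer terms is exactly the $l^1$-distance from $(x,y)$ to $(a,b)$, hence strictly less than $t$. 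Therefore $d_X(x,y)<\dis_\Delta S+t$.

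Finally, taking the supremum over all $(x,y)\in U_t(S)$ yields $\dis_\Delta U_t(S)\le\dis_\Delta S+t$, as desired. I do not expect a genuine obstacle here: the argument reduces to a single application of the triangle inequality once the $l^1$-neighborhood has been unwound. The only points worth flagging are the empty-set conventions for $U_t$ and $\dis_\Delta$, and the essential role of the $l^1$-structure — it is precisely the additivity $d_{X\times X}((x,y),(a,b))=d_X(x,a)+d_X(y,b)$ that lets the two off-diagonal distances combine into the single radius $t$; for the $l^\infty$-metric the same argument would only give the weaker bound $\dis_\Delta S+2t$.
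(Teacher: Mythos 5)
Your proof is correct and follows essentially the same route as the paper's: find $(a,b)\in S$ within $l^1$-distance $t$ of $(x,y)$ and bridge $x$ to $y$ through $(a,b)$ with one triangle inequality. The only additions are the explicit treatment of the empty-set case and the remark about the $l^\infty$-metric, both of which are fine but not needed beyond the conventions already fixed in the paper.
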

\begin{proof}
Take any $(x,y)\in U_t(S)$. There exists $(x',y')\in S$ such that $d_X(x,x')+d_X(y,y')<t$.
We have
\begin{align*}
d_X(x,y)&\le d_X(x,x')+d_X(x',y')+d_X(y',y)\\
&<d_X(x',y')+t\\
&\le \dis_\Delta S+t.
\end{align*}
This completes the proof.
\end{proof}

\begin{lem}\label{lem:disDiagonalConti}
Let $\pi$ and $\pi'$ be two Borel probability measures on the product space $X\times X$ of an mm-space $X$, where $X\times X$ is equipped with the $l^1$-metric. Then we have
\[
|\dis_\Delta\pi-\dis_\Delta\pi'|\le \dP(\pi,\pi').
\]
\end{lem}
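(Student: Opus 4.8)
The plan is to show that the functional $\pi\mapsto\dis_\Delta\pi$ of Definition \ref{dfn:disDelta} is $1$-Lipschitz with respect to $\dP$. By the symmetry of $\dP$ it suffices to prove the single inequality
\[
\dis_\Delta\pi'\le\dis_\Delta\pi+\dP(\pi,\pi').
\]
To this end I would fix an arbitrary $\ep>\dP(\pi,\pi')$ and, for a given $\delta>0$, choose a closed set $S\subset X\times X$ that is nearly optimal for $\pi$ in the sense of Definition \ref{dfn:disDelta}, that is, $\max\{\dis_\Delta S,\,1-\pi(S)\}\le\dis_\Delta\pi+\delta$. The strategy is then to produce an admissible competitor for $\pi'$ by enlarging $S$ to its $\ep$-neighborhood and transporting the mass bound across via the Prohorov inequality.

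The key step transfers the two quantities $1-\pi(S)$ and $\dis_\Delta S$ from $\pi$ to $\pi'$ with a loss of only $\ep$. For the mass, applying the defining property of $\dP$ (in the direction supplied by symmetry) to the Borel set $S$ gives $\pi'(U_\ep(S))\ge\pi(S)-\ep$. Since $U_\ep(S)$ is open, I would pass to the closed set $S':=\overline{U_\ep(S)}$; enlarging to the closure only increases measure, so $1-\pi'(S')\le(1-\pi(S))+\ep$. For the distortion I would invoke Lemma \ref{lem:disDiagonalIneq} to get $\dis_\Delta U_\ep(S)\le\dis_\Delta S+\ep$, together with the observation that $\dis_\Delta$ is invariant under taking closures (because $d_X$ is continuous, the supremum defining $\dis_\Delta$ over a set and over its closure coincide), whence $\dis_\Delta S'\le\dis_\Delta S+\ep$.

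Combining the two displays yields $\max\{\dis_\Delta S',\,1-\pi'(S')\}\le\dis_\Delta\pi+\delta+\ep$. Taking the infimum over closed sets in Definition \ref{dfn:disDelta} gives $\dis_\Delta\pi'\le\dis_\Delta\pi+\delta+\ep$, and letting $\delta\downarrow0$ and $\ep\downarrow\dP(\pi,\pi')$ establishes the one-sided estimate; swapping the roles of $\pi$ and $\pi'$ then completes the proof.

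I expect the only delicate points to be bookkeeping rather than a genuine obstacle: one must replace the open neighborhood $U_\ep(S)$ by the closed set $S'$ so that it is an admissible competitor in Definition \ref{dfn:disDelta}, and then verify that this replacement costs nothing for $\dis_\Delta$. The two small facts the argument leans on are the symmetry of $\dP$ (so that its one-sided defining inequality can be used in the needed direction) and the continuity of $d_X$ underlying the closure invariance of $\dis_\Delta$.
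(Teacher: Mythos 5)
Your proof is correct and takes essentially the same route as the paper: both arguments transfer a competitor set from one measure to the other by enlarging it to a $t$-neighborhood, controlling the distortion with Lemma \ref{lem:disDiagonalIneq} and the mass with the defining Prohorov inequality, then appeal to symmetry. The only differences are cosmetic—the paper enlarges an arbitrary closed competitor for $\pi'$ to produce one for $\pi$ rather than the reverse, and it actually plugs the open set $U_t(S)$ straight into Definition \ref{dfn:disDelta} (whose infimum ranges over closed sets) without comment, a small gap that your explicit passage to $S'=\overline{U_\ep(S)}$ and the closure-invariance of $\dis_\Delta$ cleanly repairs.
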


\begin{proof}
Take any real number $t>\dP(\pi,\pi')$ and any Borel set $S\subset X\times X$.
Since $t>\dP(\pi,\pi')$, we have
\[
\pi(U_t(S))\ge\pi'(S)-t.
\]
By Lemma \ref{lem:disDiagonalIneq}, we have
\begin{align*}
\dis_\Delta\pi&\le \max\{\dis_\Delta U_t(S),1-\pi(U_t(S))\}\\
&\le \max\{\dis_\Delta S,1-\pi'(S)\}+t.
\end{align*}
By the arbitrariness of $S\subset X\times X$, we obtain
\[
\dis_\Delta \pi\le\dis_\Delta \pi'+t.
\]
By exchanging $\pi$ for $\pi'$, we also obtain $\dis_\Delta \pi'\le\dis_\Delta \pi+\dP(\pi,\pi')$.
\end{proof}

\begin{lem}\label{lem:disDeltaPiOpt}
Let $\mu$ and $\nu$ be two Borel probability measures on a complete separable metric space $X$. Then there exists a transport plan $\pi\in\Pi(\mu,\nu)$ such that $\dP(\mu,\nu)=\dis_\Delta \pi$.
\end{lem}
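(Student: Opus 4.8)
The plan is to combine Strassen's theorem with a compactness argument over the set of transport plans. By Definition \ref{dfn:disDelta} and Theorem \ref{thm:Strassen},
\[
\dP(\mu,\nu)=\inf_{\pi\in\Pi(\mu,\nu),\,S}\max\{\dis_\Delta S,1-\pi(S)\}=\inf_{\pi\in\Pi(\mu,\nu)}\dis_\Delta\pi,
\]
so it suffices to show that the infimum of $\pi\mapsto\dis_\Delta\pi$ over $\Pi(\mu,\nu)$ is attained.

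First I would establish the weak compactness of $\Pi(\mu,\nu)$. Since $X$ is complete and separable, the measures $\mu$ and $\nu$ are tight, so for any $\ep>0$ there are compact sets $K_1,K_2\subset X$ with $\mu(X\setminus K_1)<\ep/2$ and $\nu(X\setminus K_2)<\ep/2$. Using the marginal conditions and subadditivity, every $\pi\in\Pi(\mu,\nu)$ satisfies
\[
\pi((X\times X)\setminus(K_1\times K_2))\le\mu(X\setminus K_1)+\nu(X\setminus K_2)<\ep,
\]
and $K_1\times K_2$ is compact in $X\times X$; hence $\Pi(\mu,\nu)$ is uniformly tight. It is also closed under weak convergence, because a weak limit of transport plans again has marginals $\mu$ and $\nu$ (weak convergence is preserved by the continuous projections $\pr_1,\pr_2$). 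Prohorov's theorem then yields that $\Pi(\mu,\nu)$ is weakly compact.

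Next I would use the continuity supplied by Lemma \ref{lem:disDiagonalConti}, namely $|\dis_\Delta\pi-\dis_\Delta\pi'|\le\dP(\pi,\pi')$ for Borel probability measures on $X\times X$. As $X\times X$ is itself complete and separable, the Prohorov metric metrizes weak convergence on its space of Borel probability measures, so $\pi\mapsto\dis_\Delta\pi$ is $1$-Lipschitz and in particular lower semicontinuous with respect to the weak topology. A lower semicontinuous function attains its infimum on a compact set, so there exists $\pi\in\Pi(\mu,\nu)$ with $\dis_\Delta\pi=\dP(\mu,\nu)$, as claimed.

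The main obstacle is the weak compactness of $\Pi(\mu,\nu)$: one must deduce uniform tightness from tightness of the marginals and confirm that the Prohorov metric metrizes weak convergence on $X\times X$, both of which hinge on $X$ being complete and separable. Once Lemma \ref{lem:disDiagonalConti} is invoked, the rest is a routine application of the extreme value principle for lower semicontinuous functions on compact sets.
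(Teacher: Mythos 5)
Your proposal is correct and follows essentially the same route as the paper: the paper's proof is a one-line citation of Lemma \ref{lem:disDiagonalConti} and the weak compactness of $\Pi(\mu,\nu)$, with Strassen's theorem supplying the identity $\dP(\mu,\nu)=\inf_{\pi\in\Pi(\mu,\nu)}\dis_\Delta\pi$ exactly as you use it. You have merely filled in the standard details (tightness of marginals, Prohorov's theorem, and metrizability of weak convergence by $\dP$) that the paper leaves implicit.
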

\begin{proof}
This follows from Lemma \ref{lem:disDiagonalConti} and the weak compactness of $\Pi(\mu,\nu)$.
\end{proof}
We now prepare to show the existence of a minimizer of
\[
\inf_{S\subset X\times X}\max\{\dis_{\Delta}S,1-\pi(S)\}.
\]
Let us prepare some lemmas using the following weak Hausdorff convergence.
\begin{dfn}[Weak Hausdorff convergence]\label{dfn:wH}
Let $(X,d_X)$ be a metric space. Let $A,A_n\subset X$, $n=1,2,\dots$, be closed subset of $X$. We say that {\it $A_n$ converges weakly to $A$ as $n\to\infty$} if the following \eqref{dfn:wH1} and \eqref{dfn:wH2} are both satisfied.
\begin{enumerate}
\item For any $x\in A$, we have
\[
\lim_{n\to\infty}d_X(x,A_n)=0.
\]\label{dfn:wH1}
\item For any $x\in X\setminus A$, we have
\[
\liminf_{n\to\infty}d_X(x,A_n)>0.
\]\label{dfn:wH2}
\end{enumerate}
This convergence is called the {\it weak Hausdorff convergence} or the {\it Kuratowski-Painlev\'e convergence}. 
\end{dfn}

For the weak Hausdorff convergence, the following Theorem \ref{thm:weakHcpt} is useful.
\begin{thm}[Theorem 5.2.12 in \cite{Beer:Top}]\label{thm:weakHcpt}
Let $X$ be a second countable metric space. For any sequence of closed sets of $X$, there exists a convergent subsequence in the weak Hausdorff sense.
\end{thm}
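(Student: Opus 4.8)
The plan is to produce the convergent subsequence by a diagonal extraction driven by a countable base, and then read off the limit from which basic open sets are eventually met. Since $X$ is second countable, fix a countable base $\{U_k\}_{k=1}^\infty$ of open sets. The guiding idea is that once a subsequence has decided, for every basic open set, whether it is eventually met or eventually missed, the candidate limit is pinned down: the points that should belong to the limit are exactly those all of whose basic neighborhoods are eventually met.

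First I would stabilize this hit-or-miss behavior. For $U_1$, one of the two index sets $\{n\mid A_n\cap U_1\neq\emptyset\}$ and its complement is infinite, so I pass to a subsequence on which the behavior relative to $U_1$ is constant. Iterating over $U_2,U_3,\dots$ and taking the diagonal subsequence $A_{n_j}$, I obtain one subsequence such that for every $k$ exactly one of the following holds for all large $j$: $A_{n_j}\cap U_k\neq\emptyset$, or $A_{n_j}\cap U_k=\emptyset$. Set $\chi_k:=1$ in the first case and $\chi_k:=0$ in the second.

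Next I would define the candidate limit by $A:=X\setminus G$, where $G:=\bigcup\{U_k\mid \chi_k=0\}$; thus $A$ is closed because $G$ is open. It remains to verify the two conditions of Definition \ref{dfn:wH} for $A_{n_j}$. For \eqref{dfn:wH1}, let $x\in A$; then $x$ lies in no $U_k$ with $\chi_k=0$, so every basic neighborhood of $x$ is eventually met. Given $\varepsilon>0$, I choose (using that $\{U_k\}$ is a base) an index $k$ with $x\in U_k\subset U_\varepsilon(\{x\})$, and since $\chi_k=1$ the set $A_{n_j}$ meets $U_k$ for all large $j$, giving $d_X(x,A_{n_j})<\varepsilon$ eventually; hence $\lim_j d_X(x,A_{n_j})=0$. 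For \eqref{dfn:wH2}, let $x\in X\setminus A=G$; then $x\in U_k$ for some $k$ with $\chi_k=0$, and picking $\varepsilon>0$ with $U_\varepsilon(\{x\})\subset U_k$ forces $A_{n_j}\cap U_\varepsilon(\{x\})=\emptyset$ for all large $j$, so $d_X(x,A_{n_j})\ge\varepsilon$ and $\liminf_j d_X(x,A_{n_j})\ge\varepsilon>0$.

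The hard part will be choosing the limit set so that both conditions of Definition \ref{dfn:wH} hold at once, since condition \eqref{dfn:wH1} wants every basic neighborhood of each point of $A$ to be eventually hit, while condition \eqref{dfn:wH2} wants each point outside $A$ to sit inside some eventually-missed basic set. Taking $A$ to be the complement of the union of the eventually-missed basic sets is exactly the choice that reconciles these two demands. Second countability enters twice, and both uses are essential: once to make the successive extractions terminate in a single diagonal subsequence, and once to refine an arbitrary $\varepsilon$-ball to a basic open set in the estimates above.
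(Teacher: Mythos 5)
Your proof is correct and complete: the diagonal extraction stabilizes the hit-or-miss behavior of the sequence on each member of a countable base, and taking the limit to be the complement of the union of eventually-missed basic sets does verify both conditions of Definition \ref{dfn:wH}, including the degenerate case where the limit is empty (recall $d_X(x,\emptyset)=\infty$ under the paper's convention $\inf\emptyset:=\infty$). Note that the paper itself offers no proof of this statement, quoting it as Theorem 5.2.12 of Beer's book, and your hit-and-miss diagonalization is essentially the standard argument given there, so there is nothing to reconcile between the two.
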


\begin{lem}\label{lem:measLSC}
Let $\mu$ be a Borel probability measure on a metric space $X$. Let $A,A_n\subset X$, $n=1,2,\dots$, be closed subsets of $X$. If $A_n$ converges weakly to $A$ as $n\to\infty$, then we have
\[
\mu(A)\ge \limsup_{n\to\infty}\mu(A_n).
\]
\end{lem}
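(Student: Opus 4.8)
The plan is to realize the relevant part of $A$ as the Kuratowski upper limit of the sequence $(A_n)$ and then to invoke continuity from above of the finite measure $\mu$. Concretely, I set
\[
B_N:=\overline{\bigcup_{n\ge N}A_n},\qquad L:=\bigcap_{N=1}^{\infty}B_N .
\]
Each $B_N$ is closed, the sequence $(B_N)_N$ is decreasing, and $L$ is the Kuratowski upper limit of $(A_n)$. I note at the outset that the whole argument uses only condition \eqref{dfn:wH2} of the weak Hausdorff convergence; condition \eqref{dfn:wH1} (which would instead yield lower semicontinuity) plays no role here.

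The key step, and the one deserving the most care, is the set-theoretic identity
\[
L=\set{x\in X\mid \liminf_{n\to\infty}d_X(x,A_n)=0}.
\]
To establish it I would unwind the definitions: membership $x\in B_N$ means $\inf_{n\ge N}d_X(x,A_n)=0$, since $B_N$ is the closure of $\bigcup_{n\ge N}A_n$; and $x\in L$ means this holds for every $N$. Because $\liminf_{n}d_X(x,A_n)=\sup_N\inf_{n\ge N}d_X(x,A_n)$, this supremum vanishes exactly when each term $\inf_{n\ge N}d_X(x,A_n)$ does, which gives the identity. With it in hand, condition \eqref{dfn:wH2} says precisely that no point of $X\setminus A$ lies in $L$, so $L\subset A$ and hence $\mu(L)\le\mu(A)$ by monotonicity.

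It then remains to bound $\mu(L)$ from below. Since $(B_N)_N$ is a decreasing sequence of Borel sets and $\mu$ is finite, continuity from above gives $\mu(L)=\lim_{N\to\infty}\mu(B_N)$; and for each $N$ and each $n\ge N$ we have $A_n\subset B_N$, whence $\mu(B_N)\ge\sup_{n\ge N}\mu(A_n)\ge\limsup_{n\to\infty}\mu(A_n)$. Chaining these inequalities yields
\[
\mu(A)\ge\mu(L)=\lim_{N\to\infty}\mu(B_N)\ge\limsup_{n\to\infty}\mu(A_n),
\]
which is the assertion. The only genuinely delicate point is the identity for $L$; once it is in place, everything reduces to monotonicity and continuity from above of the finite measure $\mu$, so I expect no further obstacle.
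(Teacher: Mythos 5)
Your proof is correct and takes essentially the same route as the paper's: both use only condition (2) of weak Hausdorff convergence to show that the set-theoretic upper limit of the $A_n$ lies inside $A$, and then conclude by monotonicity together with continuity from above of the finite measure $\mu$. The only cosmetic difference is that you work with the closed sets $B_N=\overline{\bigcup_{n\ge N}A_n}$ and prove an exact identity for their intersection, whereas the paper works directly with $\bigcap_{n}\bigcup_{k\ge n}A_k$ and needs only a one-sided inclusion.
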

\begin{proof}
We have
\begin{align*}
\{x\in X \mid \liminf_{n\to\infty} d_X(x,A_n)=0\}&=\bigcap_{\varepsilon>0}\bigcap_n\bigcup_{k\ge n}U_\varepsilon(A_k)\\
&\supset  \bigcap_n\bigcup_{k\ge n} A_k.
\end{align*}
Since $A_n$ converges weakly to $A$ as $n\to\infty$, we have
\[
A\supset \{x\in X \mid \liminf_{n\to\infty} d_X(x,A_n)=0\}.
\]
Therefore we obtain
\begin{align*}
\mu(A)&\ge \mu(\bigcap_n\bigcup_{k\ge n} A_k)\\
&=\lim_{n\to\infty}\mu(\bigcup_{k\ge n} A_k)\\
&\ge \limsup_{n\to\infty}\mu(A_n).
\end{align*}
\end{proof}

\begin{lem}\label{lem:disDeltaLSC}
Let $X\times X$ be the $l^1$-product space of a metric space $X$. Let $S,S_n\subset X\times X$, $n=1,2,\dots$, be closed subsets. If $S_n$ converges weakly to $S$ as $n\to\infty$, then we have
\[
\dis_\Delta(S)\le \liminf_{n\to\infty}\dis_\Delta(S_n).
\]
\end{lem}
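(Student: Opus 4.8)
The plan is to use only the first condition \eqref{dfn:wH1} of weak Hausdorff convergence, namely that every point of the limit set $S$ is approached by the sets $S_n$; the second condition \eqref{dfn:wH2} plays no role here (in contrast to Lemma \ref{lem:measLSC}, where it was essential). If $S=\emptyset$, then $\dis_\Delta(S)=0$ and there is nothing to prove, so I will assume $S\neq\emptyset$ and fix an arbitrary point $(x,y)\in S$.

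First I would produce approximating points inside the $S_n$. By \eqref{dfn:wH1}, applied in the $l^1$-product space $X\times X$, we have $d((x,y),S_n)\to 0$ as $n\to\infty$, where $d$ denotes the $l^1$-metric. In particular $S_n\neq\emptyset$ for all sufficiently large $n$, and for each such $n$ I can choose $(x_n,y_n)\in S_n$ with $d((x,y),(x_n,y_n))\le d((x,y),S_n)+1/n$. Since $d((x,y),(x_n,y_n))=d_X(x,x_n)+d_X(y,y_n)$, both summands tend to $0$; that is, $x_n\to x$ and $y_n\to y$ in $X$.

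Next I would pass to the limit in the defining inequality of $\dis_\Delta$. For every large $n$ the definition of $\dis_\Delta$ gives $d_X(x_n,y_n)\le\dis_\Delta(S_n)$, while the continuity of the metric $d_X$ yields $d_X(x_n,y_n)\to d_X(x,y)$. Taking $\liminf$ on both sides, and using that the left-hand side converges, I obtain
\[
d_X(x,y)=\lim_{n\to\infty}d_X(x_n,y_n)\le\liminf_{n\to\infty}\dis_\Delta(S_n).
\]

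Finally, since $(x,y)\in S$ was arbitrary, taking the supremum over all $(x,y)\in S$ on the left-hand side gives $\dis_\Delta(S)=\sup_{(x,y)\in S}d_X(x,y)\le\liminf_{n\to\infty}\dis_\Delta(S_n)$, which is the claim. This argument is essentially routine; the only point that demands a little care is the possibility that $S_n=\emptyset$ for small $n$, but this is harmless, because \eqref{dfn:wH1} forces $d((x,y),S_n)$ to be finite, and hence $S_n$ to be nonempty, for all sufficiently large $n$, and the value of the $\liminf$ is unaffected by finitely many terms.
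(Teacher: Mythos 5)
Your proof is correct, and it takes a genuinely different route from the paper's. The paper argues at the level of sets: from condition \eqref{dfn:wH1} it extracts the inclusion $S\subset\bigcap_{\varepsilon>0}\bigcup_n\bigcap_{k\ge n}U_\varepsilon(S_k)$, and then pushes $\dis_\Delta$ through this expression using monotonicity under inclusion, the fact that $\dis_\Delta$ of an increasing union is the supremum of the individual $\dis_\Delta$'s, and the enlargement estimate $\dis_\Delta U_t(S)\le\dis_\Delta S+t$ of Lemma \ref{lem:disDiagonalIneq}. You instead argue pointwise: each $(x,y)\in S$ is approximated by points $(x_n,y_n)\in S_n$, and continuity of $d_X$ turns the trivial bound $d_X(x_n,y_n)\le\dis_\Delta(S_n)$ into $d_X(x,y)\le\liminf_{n\to\infty}\dis_\Delta(S_n)$, after which you take the supremum over $(x,y)\in S$. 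Both arguments use only condition \eqref{dfn:wH1}, and you handle the degenerate cases correctly: $\dis_\Delta\emptyset=0$ disposes of $S=\emptyset$, and the convention $d((x,y),\emptyset)=\infty$ indeed forces $S_n\neq\emptyset$ for large $n$. What your route buys is economy: it needs neither Lemma \ref{lem:disDiagonalIneq} nor any manipulation of $\varepsilon$-neighborhoods and countable unions/intersections, and it adapts verbatim to Lemma \ref{lem:disLSC} (approximate both points of a pair in $X\times Y$ and use continuity of $(x,y,x',y')\mapsto|d_X(x,x')-d_Y(y,y')|$). What the paper's route buys is uniformity of presentation: its computation is structurally parallel to Lemma \ref{lem:measLSC}, the same template is invoked wholesale for Lemma \ref{lem:disLSC}, and the enlargement lemma it leans on is needed elsewhere in any case (Lemma \ref{lem:disDiagonalConti}), so the paper gets this lemma almost for free from machinery already in place.
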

\begin{proof}
Since $S_n$ converges weakly to $S$ as $n\to\infty$, we have
\begin{align*}
S&\subset \left\{(x,x')\in X\times X\mid \lim_{n\to\infty}d_{X\times X}((x,x'),S_n)=0\right\}\\
&=\bigcap_{\varepsilon>0}\bigcup_n\bigcap_{k\ge n}U_\varepsilon(S_k).
\end{align*}
By Lemma \ref{lem:disDiagonalIneq}, we have
\begin{align*}
\dis_\Delta S&\le \dis_\Delta\left(\bigcap_{\varepsilon>0}\bigcup_n\bigcap_{k\ge n}U_\varepsilon(S_k)\right)\\
&\le \lim_{\varepsilon\to +0}\dis_\Delta\left(\bigcup_n\bigcap_{k\ge n}U_\varepsilon(S_k)\right)\\
&=\lim_{\varepsilon\to +0}\sup_n\dis_\Delta\left(\bigcap_{k\ge n}U_\varepsilon(S_k)\right)\\
&\le \lim_{\varepsilon\to +0}\sup_n\inf_{k\ge n}\dis_\Delta\left(U_\varepsilon(S_k)\right)\\
&\le \lim_{\varepsilon\to +0} \liminf_{n\to\infty}(\dis_\Delta S_n+\varepsilon)\\
&=\liminf_{n\to\infty}\dis_\Delta S_n.
\end{align*}
\end{proof}

\begin{lem}\label{lem:disDeltaS_Opt}
Let $\pi$ be a Borel probability measure on $X\times X$. Then there exists a closed set $S\subset X\times X$ such that
\[
\dis_\Delta \pi=\max\{\dis_\Delta S,1-\pi(S)\}.
\]
\end{lem}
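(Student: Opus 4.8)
The plan is to realize the infimum defining $\dis_\Delta\pi$ as a limit along a minimizing sequence and then to extract a weak Hausdorff limit that attains it. First I would choose a sequence of closed sets $S_n\subset X\times X$ with
\[
\max\{\dis_\Delta S_n,1-\pi(S_n)\}\longrightarrow \dis_\Delta\pi
\]
as $n\to\infty$. Note that taking the closed set $\emptyset$ gives $\max\{\dis_\Delta\emptyset,1-\pi(\emptyset)\}=\max\{0,1\}=1$, so $\dis_\Delta\pi\le 1<\infty$; consequently $\dis_\Delta S_n$ stays bounded for all large $n$, and the sequence is well behaved. Since $X$ is complete separable, the $l^1$-product space $X\times X$ is again second countable, so Theorem \ref{thm:weakHcpt} applies and yields a subsequence, which I relabel as $S_n$, converging weakly to some closed set $S\subset X\times X$.

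The heart of the argument is the lower semicontinuity of the objective under weak Hausdorff convergence, which is exactly what the preceding lemmas supply. Lemma \ref{lem:disDeltaLSC} gives
\[
\dis_\Delta S\le \liminf_{n\to\infty}\dis_\Delta S_n,
\]
while Lemma \ref{lem:measLSC}, applied to the Borel probability measure $\pi$ on $X\times X$, gives $\pi(S)\ge\limsup_{n\to\infty}\pi(S_n)$ and hence
\[
1-\pi(S)\le \liminf_{n\to\infty}\bigl(1-\pi(S_n)\bigr).
\]
Each of these two right-hand sides is dominated by $\liminf_{n\to\infty}\max\{\dis_\Delta S_n,1-\pi(S_n)\}=\dis_\Delta\pi$, so taking the maximum of the two left-hand sides yields $\max\{\dis_\Delta S,1-\pi(S)\}\le\dis_\Delta\pi$. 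The reverse inequality is immediate, since $S$ is a closed set and $\dis_\Delta\pi$ is by Definition \ref{dfn:disDelta} the infimum of $\max\{\dis_\Delta S',1-\pi(S')\}$ over all closed $S'$. Combining the two inequalities shows that $S$ is the required minimizer.

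The one point I expect to require care is the interplay between the two semicontinuity estimates: both are \emph{lower} semicontinuity statements for the two competing terms, and the conclusion that their maximum survives the passage to the limit relies on the fact that each term is separately bounded by the common $\liminf$ of the maxima along the minimizing sequence. I would also keep an eye on the degenerate case in which the weak Hausdorff limit $S$ happens to be empty; there $\dis_\Delta S=0$ and $1-\pi(S)=1$, but the estimates above still force $\dis_\Delta\pi=1$, so the conclusion holds verbatim and no separate treatment is needed.
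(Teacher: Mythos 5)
Your proof is correct and follows essentially the same route as the paper's: take a minimizing sequence of closed sets, extract a weak Hausdorff convergent subsequence via Theorem \ref{thm:weakHcpt}, and apply the two semicontinuity lemmas (Lemma \ref{lem:measLSC} for $1-\pi(\cdot)$ and Lemma \ref{lem:disDeltaLSC} for $\dis_\Delta$) together with $\max\{\liminf a_n,\liminf b_n\}\le\liminf\max\{a_n,b_n\}$ to conclude that the limit set attains the infimum. Your additional remarks on the bound $\dis_\Delta\pi\le 1$ and on the empty limit set are harmless but not needed.
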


\begin{proof}
By the definition of $\dis_\Delta \pi$, there exists a sequence $\{S_n\}$ of closed sets of $X\times X$ such that
\[
\max\{\dis_\Delta(S_n),1-\pi(S_n)\}<\dis_\Delta\pi+\frac 1n.
\]
By Theorem \ref{thm:weakHcpt}, there exist an increasing function $\iota:\N\to\N$ and a closed set $S\subset X\times X$ such that $S_{\iota(n)}$ converges weakly to $S$ as $n\to\infty$.
By Lemmas \ref{lem:measLSC} and \ref{lem:disDeltaLSC}, we have
\begin{align*}
\max\{\dis_\Delta(S),1-\pi(S)\}&\le \max\{\liminf_{n\to\infty}\dis_\Delta(S_{\iota(n)}),\liminf_{n\to\infty}(1-\pi(S_{\iota(n)}))\}\\
&\le\liminf_{n\to\infty}\max\{\dis_\Delta(S_{\iota(n)}),1-\pi(S_{\iota(n)})\}\\
&\le\liminf_{n\to\infty}\left(\dis_\Delta\pi+\frac1{\iota(n)}\right)\\
&=\dis_\Delta\pi.
\end{align*}
This completes the proof.
\end{proof}

\begin{proof}[Proof of Theorem \ref{thm:prohOpt}]
By Lemmas \ref{lem:disDeltaPiOpt} and \ref{lem:disDeltaS_Opt}, there exist a transport plan $\pi\in\Pi(\mu,\nu)$ and a closed set $S\subset X\times X$ such that
\[
\dP(\mu,\nu)=\max\{\dis_\Delta(S),1-\pi(S)\}.
\]
This implies
\[
\dP(\mu,\nu)=\min_{\pi\in\Pi(\mu,\nu), S\subset X\times X}\max\{\dis_\Delta(S),1-\pi(S)\}
\]
by Theorem \ref{thm:Strassen}.
\end{proof}

\section{Box distance via optimal transport}\label{sec:boxOpt}
In this section, we give the proof of Theorem \ref{thm:boxOpt}.
The proof is very similar to Theorem \ref{thm:prohOpt} so we often omit similar parts.
The following symbol is similar to in the Definition \ref{dfn:disDelta}.
\begin{dfn}[Distortion of a transport plan]\label{dfn:disTrans}
Let $m_X$ and $m_Y$ be two Borel probability measures on $X$ and $Y$, respectively.
We define {\it the distortion} of a transport plan $\pi\in\Pi(m_X,m_Y)$ between $m_X$ and $m_Y$ by
\[
\dis\pi:=\inf_S \max\{\dis S,1-\pi(S)\},
\]
where $S\subset X\times Y$ runs over all closed subsets.
\end{dfn}
The parameters are rewritten into the transport plans by the following Lemma \ref{lem:paraPlan}.
\begin{lem}\label{lem:paraPlan}
Let $X$ and $Y$ be two mm-spaces. Then we have
\[
\Pi(m_X,m_Y)=\{(\varphi,\psi)_*\cL^1\mid \varphi:I\to X,\ \psi:I\to Y \},
\]
where $\varphi$ and $\psi$ are two parameters of $X$ and $Y$, respectively.
\end{lem}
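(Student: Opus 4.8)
The plan is to establish the set equality by proving the two inclusions separately, with essentially all of the genuine content residing in one standard fact about Polish probability spaces.

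For the inclusion $\supseteq$, I would start from two parameters $\varphi\colon I\to X$ and $\psi\colon I\to Y$. The map $(\varphi,\psi)\colon I\to X\times Y$ is then Borel measurable, so $(\varphi,\psi)_*\cL^1$ is a Borel probability measure on $X\times Y$. Since $\pr_1\circ(\varphi,\psi)=\varphi$ and $\pr_2\circ(\varphi,\psi)=\psi$, functoriality of the push-forward yields $(\pr_1)_*(\varphi,\psi)_*\cL^1=\varphi_*\cL^1=\mux$ and $(\pr_2)_*(\varphi,\psi)_*\cL^1=\psi_*\cL^1=\muy$. Hence $(\varphi,\psi)_*\cL^1\in\Pi(\mux,\muy)$, which is the desired inclusion.

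For the reverse inclusion $\subseteq$, I would regard $X\times Y$, equipped with the $l^1$-product metric, as a complete separable metric space, so that any $\pi\in\Pi(\mux,\muy)$ is a Borel probability measure on a Polish space, i.e.\ $(X\times Y,\pi)$ is itself an mm-space. The key step is to invoke the existence of a parameter of this mm-space: there is a Borel measurable map $\Phi\colon I\to X\times Y$ with $\Phi_*\cL^1=\pi$. Setting $\varphi:=\pr_1\circ\Phi$ and $\psi:=\pr_2\circ\Phi$, so that $\Phi=(\varphi,\psi)$, the same functoriality computation as above gives $\varphi_*\cL^1=(\pr_1)_*\pi=\mux$ and $\psi_*\cL^1=(\pr_2)_*\pi=\muy$, whence $\varphi$ and $\psi$ are parameters of $X$ and $Y$. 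Finally $(\varphi,\psi)_*\cL^1=\Phi_*\cL^1=\pi$, completing the inclusion.

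The only nontrivial ingredient is the existence of the parameter $\Phi$ for $(X\times Y,\pi)$; everything else is formal bookkeeping of marginals. Thus the main (and essentially the only) obstacle is the appeal to the existence-of-parameter theorem for a Polish probability space, which asserts that every Borel probability measure on a complete separable metric space is the push-forward of $\cL^1$ under a Borel map. This is entirely standard in the metric measure setting and requires no new argument here, so I would simply cite it rather than reprove it.
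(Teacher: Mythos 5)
Your proposal is correct and follows essentially the same route as the paper: the inclusion $\supseteq$ by functoriality of the push-forward under the projections, and the inclusion $\subseteq$ by viewing $(X\times Y,\pi)$ as an mm-space and invoking the existence of a parameter $\Phi\colon I\to X\times Y$ (the paper cites Lemma 4.2 in \cite{Shioya:mmg} for exactly this), then taking $\varphi:=\pr_1\circ\Phi$ and $\psi:=\pr_2\circ\Phi$. The only difference is cosmetic: you make explicit that the $l^1$-product metric keeps $X\times Y$ complete and separable, which the paper leaves implicit.
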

\begin{proof}
Take any two parameters $\varphi:I\to X$ and $\psi:I\to Y$ of two mm-spaces $X$ and $Y$, respectively.
Since we have
\[
(\pr_1)_*(\varphi,\psi)_*\cL^1=(\pr_1\circ(\varphi,\psi))_*\cL^1=\varphi_*\cL^1=m_X
\]
and
\[
(\pr_2)_*(\varphi,\psi)_*\cL^1=(\pr_2\circ(\varphi,\psi))_*\cL^1=\psi_*\cL^1=m_Y,
\]
we obtain $(\varphi,\psi)_*\cL^1\in\Pi(m_X,m_Y)$.

Conversely, we take any transport plan $\pi\in\Pi(m_X,m_Y)$.
There exists a parameter $\Phi:I\to X\times Y$ of the measure space $(X\times Y,\pi)$ by Lemma 4.2 in \cite{Shioya:mmg}.
We put $\varphi:=\pr_1\circ\Phi$ and $\psi:=\pr_2\circ\Phi$. The maps $\varphi:=\pr_1\circ\Phi$ and $\psi:=\pr_2\circ\Phi$ are two parameters of $X$ and $Y$, respectively.
Now we have
\[
\pi=\Phi_*\cL^1=(\varphi,\psi)_*\cL^1.
\]
This completes the proof.
\end{proof}

\begin{lem}\label{lem:disBox}
Let $\varphi:I\to X$ and $\psi:I\to Y$ be two parameters of two mm-spaces $X$ and $Y$, respectively. Then we have
\[
\square(\varphi^*d_X,\psi^*d_Y)=\dis ((\varphi,\psi)_*\cL^1).
\]
\end{lem}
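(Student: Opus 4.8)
The plan is to prove the two inequalities separately, transferring between Borel subsets $I_0\subset I$ and closed subsets $S\subset X\times Y$ through the map $(\varphi,\psi)\colon I\to X\times Y$. The bridge is the elementary identity that, for $s,t\in I$, the pair of points $(\varphi(s),\psi(s))$ and $(\varphi(t),\psi(t))$ satisfies
\[
|d_X(\varphi(s),\varphi(t))-d_Y(\psi(s),\psi(t))|=|\varphi^*d_X(s,t)-\psi^*d_Y(s,t)|,
\]
so that a distortion bound for a set of pairs in $X\times Y$ is exactly a pointwise bound for the two pseudo-metrics on the corresponding parameter set, while $\cL^1(I_0)$ and $\pi(S)$ are matched through the push-forward $\pi=(\varphi,\psi)_*\cL^1$.

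For $\dis\pi\le\square(\varphi^*d_X,\psi^*d_Y)$, I would fix $\varepsilon$ with $\square(\varphi^*d_X,\psi^*d_Y)<\varepsilon<1$ (the value $\square(\varphi^*d_X,\psi^*d_Y)=1$ being handled at the end) and take a Borel set $I_0$ witnessing the box distance at level $\varepsilon$, so $\cL^1(I_0)\ge 1-\varepsilon>0$ and $|\varphi^*d_X-\psi^*d_Y|\le\varepsilon$ on $I_0\times I_0$. Setting $S:=\overline{(\varphi,\psi)(I_0)}$ produces a nonempty closed set; by the identity above $\dis\bigl((\varphi,\psi)(I_0)\bigr)\le\varepsilon$, and since the distortion is unchanged by passing to the closure, $\dis S\le\varepsilon$. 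Moreover $(\varphi,\psi)^{-1}(S)\supset I_0$ gives $\pi(S)\ge\cL^1(I_0)\ge 1-\varepsilon$, whence $\max\{\dis S,1-\pi(S)\}\le\varepsilon$ and $\dis\pi\le\varepsilon$; taking the infimum over such $\varepsilon$ finishes this direction. When $\square(\varphi^*d_X,\psi^*d_Y)=1$, choosing $S$ to be a single point shows $\dis\pi\le 1$ directly.

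For the reverse inequality $\square(\varphi^*d_X,\psi^*d_Y)\le\dis\pi$, I would fix $\varepsilon>\dis\pi$ and take a closed $S\subset X\times Y$ with $\max\{\dis S,1-\pi(S)\}<\varepsilon$; this forces $\dis S<\infty$, so $S\neq\emptyset$. Put $I_0:=(\varphi,\psi)^{-1}(S)$, which is Borel because $\varphi$ and $\psi$ are Borel measurable and $S$ is closed. The definition of the push-forward gives $\cL^1(I_0)=\pi(S)>1-\varepsilon$, and for $s,t\in I_0$ both $(\varphi(s),\psi(s))$ and $(\varphi(t),\psi(t))$ lie in $S$, so the identity above bounds $|\varphi^*d_X(s,t)-\psi^*d_Y(s,t)|$ by $\dis S<\varepsilon$. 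Thus $I_0$ witnesses $\square(\varphi^*d_X,\psi^*d_Y)\le\varepsilon$, and taking the infimum over such $\varepsilon$ completes the proof.

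The only genuinely delicate points are the invariance of $\dis$ under taking closures (needed to obtain a closed competitor $S$ in the first direction, and justified by the continuity of the map sending $(x,y,x',y')$ to $|d_X(x,x')-d_Y(y,y')|$ together with the density of $S$ in $\overline S$) and the bookkeeping at the boundary value $\varepsilon=1$, where one must keep $I_0$ of positive measure so that the associated set $S$ is nonempty and has finite distortion. Everything else is routine once the dictionary $I_0\leftrightarrow S$ is in place.
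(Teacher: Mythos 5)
Your proof is correct and follows essentially the same route as the paper: the same dictionary $S:=\overline{(\varphi,\psi)(I_0)}$ in one direction and $I_0:=(\varphi,\psi)^{-1}(S)$ in the other, with the measure and distortion bounds transferred through the push-forward. The only difference is that you make explicit two points the paper passes over silently --- the invariance of $\dis$ under closure and the boundary case where $I_0$ could be empty (relevant since $\dis\emptyset=\infty$) --- which is a welcome bit of extra care, not a different argument.
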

\begin{proof}
Take any real number $\ep >\square(\varphi^*d_X,\psi^*d_Y)$. There exists a Borel set $I_0\subset I$ such that $\cL^1(I_0)\ge 1-\ep$ and
$
|\varphi^*d_X(s,t)-\psi^*d_Y(s,t)|\le \ep
$
for any $s,t\in I_0$. We define
\[
S:=\overline{(\varphi,\psi)(I_0)}.
\]
Then we have $\dis S\le \ep$ and $(\varphi,\psi)_*\cL^1(S)\ge\cL^1(I_0)\ge1-\ep$. Hence we obtain $\dis((\varphi,\psi)_*\cL^1)\le\ep$. This implies that $\square(\varphi^*d_X,\psi^*d_Y)\ge\dis ((\varphi,\psi)_*\cL^1)$. 

Conversely, take any real number $\ep>\dis ((\varphi,\psi)_*\cL^1)$. There exists a Borel set $S\subset X\times Y$ such that $(\varphi,\psi)_*\cL^1(S)\ge 1-\ep$ and $\dis S\le \ep$. We define $I_0:=(\varphi,\psi)^{-1}(S)$. We have $\cL^1(I_0)=(\varphi,\psi)_*\cL^1(S)\ge 1-\ep$. Since $\dis S\le \ep$, we have $|\varphi^*d_X(s,t)-\psi^*d_Y(s,t)|\le \ep$ for any $s,t\in I_0$. Hence we obtain $\square(\varphi^*d_X,\psi^*d_Y)\le\ep$. This implies that $\square(\varphi^*d_X,\psi^*d_Y)\le\dis ((\varphi,\psi)_*\cL^1)$. This completes the proof.
\end{proof}
\begin{prop}\label{prop:transportBox}
Let $X$ and $Y$ be two mm-spaces. Then we have
\[
\square(X,Y)=\inf_{\pi\in\Pi(m_X,m_Y)}\dis\pi.
\]
\end{prop}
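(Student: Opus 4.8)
The plan is to combine the two preceding lemmas to rewrite the defining infimum of $\square(X,Y)$, which ranges over pairs of parameters, as an infimum ranging over transport plans. First I would recall that, by Definition~\ref{dfn:box},
\[
\square(X,Y)=\inf_{\varphi,\psi}\square(\varphi^*d_X,\psi^*d_Y),
\]
where $\varphi\colon I\to X$ and $\psi\colon I\to Y$ run over all parameters of $X$ and $Y$ respectively.

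Next I would apply Lemma~\ref{lem:disBox} termwise, replacing $\square(\varphi^*d_X,\psi^*d_Y)$ by $\dis((\varphi,\psi)_*\cL^1)$. The key point of this step is that the integrand $\square(\varphi^*d_X,\psi^*d_Y)$ depends on the pair $(\varphi,\psi)$ only through the pushforward measure $(\varphi,\psi)_*\cL^1$ on $X\times Y$; Lemma~\ref{lem:disBox} is precisely the statement that isolates this dependence. This yields
\[
\square(X,Y)=\inf_{\varphi,\psi}\dis((\varphi,\psi)_*\cL^1).
\]

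Finally I would re-index this infimum using Lemma~\ref{lem:paraPlan}, which asserts that the assignment $(\varphi,\psi)\mapsto(\varphi,\psi)_*\cL^1$ has image exactly $\Pi(m_X,m_Y)$. Since every transport plan $\pi\in\Pi(m_X,m_Y)$ is realized as $(\varphi,\psi)_*\cL^1$ for some pair of parameters, and conversely every such pushforward is a transport plan, the infimum over parameters coincides with the infimum over $\Pi(m_X,m_Y)$, giving
\[
\square(X,Y)=\inf_{\pi\in\Pi(m_X,m_Y)}\dis\pi.
\]

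I do not expect a genuine obstacle here, as the substantive content has already been packaged into the two lemmas: Lemma~\ref{lem:disBox} guarantees that the box distance between the pulled-back pseudo-metrics is a function of the coupling alone, and Lemma~\ref{lem:paraPlan} guarantees that the parametrizations and the couplings index the same set. The only care needed is to check that the re-indexing is an equality of infima in both directions, which is immediate from the surjectivity part of Lemma~\ref{lem:paraPlan}.
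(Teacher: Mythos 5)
Your proposal is correct and follows exactly the paper's own argument: it unwinds the definition of $\square(X,Y)$ as an infimum over parameters, applies Lemma~\ref{lem:disBox} termwise, and re-indexes via Lemma~\ref{lem:paraPlan}. No differences to report.
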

\begin{proof}
By Lemmas \ref{lem:paraPlan} and \ref{lem:disBox}, we have
\begin{align*}
\square(X,Y)&=\inf_{\varphi,\psi}\square(\varphi^*d_X,\psi^*d_Y)\\
&=\inf_{\varphi,\psi}\dis ((\varphi,\psi)_*\cL^1)\\
&=\inf_{\pi\in\Pi(m_X,m_Y)}\dis\pi.
\end{align*}
This completes the proof.
\end{proof}
\begin{lem}\label{lem:disIneq}
Let $X$ and $Y$ be two metric spaces. Let $X\times Y$ be the product space equipped with the $l^1$-metric. Then we have
\[
\dis U_t(S)\le \dis S+2t.
\]
\end{lem}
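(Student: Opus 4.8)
The plan is to follow the template of Lemma \ref{lem:disDiagonalIneq}, but since the full distortion $\dis$ compares \emph{two} pairs of points rather than the single pair appearing in $\dis_\Delta$, I expect the neighborhood enlargement to cost $2t$ instead of $t$. The case $S=\emptyset$ is trivial by the convention $\dis\emptyset=\infty$, so I assume $S$ nonempty and fix two arbitrary elements $(x,y),(x',y')\in U_t(S)$. By the definition of the $t$-neighborhood in the $l^1$-product metric on $X\times Y$, there exist $(a,b),(a',b')\in S$ with
\[
d_X(x,a)+d_Y(y,b)<t \quad\text{and}\quad d_X(x',a')+d_Y(y',b')<t.
\]

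The core estimate is a reverse triangle inequality applied in each factor separately. I would first establish
\[
|d_X(x,x')-d_X(a,a')|\le d_X(x,a)+d_X(x',a')
\]
together with the analogous bound $|d_Y(y,y')-d_Y(b,b')|\le d_Y(y,b)+d_Y(y',b')$. Inserting the intermediate quantity $d_X(a,a')-d_Y(b,b')$ and using the triangle inequality for the absolute value then yields
\[
|d_X(x,x')-d_Y(y,y')|\le |d_X(x,x')-d_X(a,a')|+|d_X(a,a')-d_Y(b,b')|+|d_Y(b,b')-d_Y(y,y')|.
\]
The middle term is at most $\dis S$ because both $(a,b)$ and $(a',b')$ lie in $S$, while the two outer terms together are bounded by
\[
\bigl(d_X(x,a)+d_Y(y,b)\bigr)+\bigl(d_X(x',a')+d_Y(y',b')\bigr)<2t.
\]
Thus $|d_X(x,x')-d_Y(y,y')|<\dis S+2t$, and taking the supremum over all $(x,y),(x',y')\in U_t(S)$ gives the claim.

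I do not anticipate a genuine obstacle, as the argument is routine once the right approximants are chosen. The only point worth flagging is the bookkeeping that produces the constant $2$: whereas $\dis_\Delta$ involves a single point and hence a single $l^1$-displacement below $t$, the distortion $\dis$ involves two independent pairs, each approximated by a point of $S$ at $l^1$-distance below $t$, so the two displacements add. It is precisely the additivity of the $l^1$-metric across the $X$- and $Y$-coordinates that allows each pair's displacement to split into an $X$-part and a $Y$-part and recombine cleanly in the two coordinatewise triangle inequalities above, giving exactly the bound $2t$.
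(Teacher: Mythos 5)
Your proof is correct and follows essentially the same route as the paper: both arguments pick approximants $(a,b),(a',b')\in S$ at $l^1$-distance less than $t$ and combine factor-wise reverse triangle inequalities, with the additivity of the $l^1$-metric producing the constant $2t$. The only cosmetic difference is that you insert the intermediate terms $d_X(a,a')$ and $d_Y(b,b')$ and use the three-term triangle inequality for absolute values, whereas the paper first reduces by symmetry to the case $d_X(x_1,x_2)\ge d_Y(y_1,y_2)$ and estimates directly.
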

\begin{proof}
Take any two points $(x_1,y_1),(x_2,y_2)\in U_t(S)$. It is sufficient to prove that
\begin{equation}\label{lem:disIneq:goal}
|d_X(x_1,x_2)-d_Y(y_1,y_2)|\le \dis S+2t.
\end{equation}
From symmetry, we may assume that $d_X(x_1,x_2)\ge d_Y(y_1,y_2)$. By $(x_i,y_i)\in U_t(S)$, there exists $(x'_i,y'_i)\in S$ such that
\[
d_X(x_i,x'_i)+d_Y(y_i,y'_i)<t
\]
 for $i=1,2$. Now we have
\begin{align*}
&|d_X(x_1,x_2)-d_Y(y_1,y_2)|=d_X(x_1,x_2)-d_Y(y_1,y_2)\\
&\le d_X(x'_1,x'_2)-d_Y(y'_1,y'_2)+\sum_{i=1}^2\{d_X(x_i,x'_i)+d_Y(y_i,y'_i)\}\\
&<|d_X(x'_1,x'_2)-d_Y(y'_1,y'_2)|+2t
\le \dis S+2t.
\end{align*}
This completes the proof.
\end{proof}

\begin{lem}\label{lem:disConti}
Let $X$ and $Y$ be two metric spaces. Then we have
\[
|\dis\pi-\dis\pi'|\le 2\dP(\pi,\pi')
\]
for any two Borel probability measures $\pi$ and $\pi'$ on $X\times Y$, where a metric on $X\times Y$ is the $l^1$-metric.
\end{lem}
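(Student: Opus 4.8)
The plan is to follow the same one-sided-estimate-then-symmetrize strategy as in the proof of Lemma \ref{lem:disDiagonalConti}, simply replacing the diagonal distortion inequality by its product-space counterpart Lemma \ref{lem:disIneq}. The factor $2$ in the statement is precisely the factor $2$ appearing in $\dis U_t(S)\le \dis S+2t$, as opposed to the $+t$ of Lemma \ref{lem:disDiagonalIneq}; otherwise the argument is structurally identical.

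First I would fix an arbitrary real number $t>\dP(\pi,\pi')$ and an arbitrary Borel set $S\subset X\times Y$. By the definition of the Prohorov distance, $t>\dP(\pi,\pi')$ yields $\pi(U_t(S))\ge \pi'(S)-t$. I would also observe that the infimum defining $\dis\pi$ in Definition \ref{dfn:disTrans} may equally be taken over all Borel sets rather than only closed ones, since passing to the closure leaves $\dis S$ unchanged and can only increase $\pi(S)$; in particular the open set $U_t(S)$ is an admissible competitor.

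Next I would combine these facts with Lemma \ref{lem:disIneq}. Using $U_t(S)$ as a competitor in the infimum for $\dis\pi$ and then applying $\dis U_t(S)\le \dis S+2t$ together with $\pi(U_t(S))\ge \pi'(S)-t$, I obtain
\[
\dis\pi\le \max\{\dis U_t(S),\,1-\pi(U_t(S))\}\le \max\{\dis S+2t,\,1-\pi'(S)+t\}\le \max\{\dis S,\,1-\pi'(S)\}+2t,
\]
where the last inequality uses $t\ge 0$. Taking the infimum over all Borel $S\subset X\times Y$ gives $\dis\pi\le \dis\pi'+2t$, and letting $t\downarrow\dP(\pi,\pi')$ yields $\dis\pi\le \dis\pi'+2\dP(\pi,\pi')$.

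Finally, exchanging the roles of $\pi$ and $\pi'$ produces the reverse inequality $\dis\pi'\le \dis\pi+2\dP(\pi,\pi')$, and the two together give the claim. I do not expect any genuine obstacle here, as everything runs parallel to Lemma \ref{lem:disDiagonalConti}; the only point requiring a moment's care is that the open neighborhood $U_t(S)$ is a legitimate test set in the infimum, which is exactly what the closure remark in the second paragraph settles.
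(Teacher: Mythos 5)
Your proof is correct and is exactly the argument the paper intends: the paper's own proof of Lemma \ref{lem:disConti} just says to repeat the proof of Lemma \ref{lem:disDiagonalConti} with Lemma \ref{lem:disIneq} in place of Lemma \ref{lem:disDiagonalIneq}, which is precisely what you do, with the factor $2$ entering through $\dis U_t(S)\le\dis S+2t$. Your explicit remark that the infimum in Definition \ref{dfn:disTrans} can be taken over Borel sets (since replacing $S$ by $\overline{S}$ preserves $\dis$ and does not decrease $\pi(S)$) cleanly settles a point the paper leaves implicit when it tests with the open set $U_t(S)$.
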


\begin{proof}
By Lemma \ref{lem:disIneq}, we prove in the same way as Lemma \ref{lem:disDiagonalConti}.
\end{proof}

\begin{lem}\label{lem:optBox1}
Let $X$ and $Y$ be two mm-spaces. Then there exists a transport plan $\pi\in\Pi(m_X,m_Y)$ such that
$\square(X,Y)=\dis \pi$.
\end{lem}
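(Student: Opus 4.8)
The plan is to mirror the proof of Lemma \ref{lem:disDeltaPiOpt} exactly, replacing the diagonal distortion $\dis_\Delta$ by the box distortion $\dis$. By Proposition \ref{prop:transportBox} we already have $\square(X,Y)=\inf_{\pi\in\Pi(m_X,m_Y)}\dis\pi$, so the only thing left to establish is that this infimum is actually attained. First I would select a minimizing sequence $\{\pi_n\}\subset\Pi(m_X,m_Y)$ with $\dis\pi_n\to\square(X,Y)$ as $n\to\infty$.

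Next I would invoke the weak compactness of $\Pi(m_X,m_Y)$. Since $X$ and $Y$ are complete separable metric spaces, the marginals $m_X$ and $m_Y$ are tight, so by Prokhorov's theorem the family of couplings $\Pi(m_X,m_Y)$ is tight; as it is also weakly closed, it is weakly compact. Passing to a subsequence, I may therefore assume that $\pi_n$ converges weakly to some limit $\pi\in\Pi(m_X,m_Y)$.

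Finally, because the weak topology on Borel probability measures on the separable space $X\times Y$ is metrized by the Prohorov metric $\dP$, Lemma \ref{lem:disConti} shows that the map $\pi\mapsto\dis\pi$ is continuous (indeed $2$-Lipschitz) with respect to weak convergence. Hence $\dis\pi=\lim_{n\to\infty}\dis\pi_n=\square(X,Y)$, and $\pi$ is the desired optimal transport plan. The argument is entirely routine and presents essentially no obstacle, since the needed continuity and compactness inputs are already in hand; the only point requiring minor care is the verification that $\Pi(m_X,m_Y)$ is weakly compact, which is precisely where completeness and separability of $X$ and $Y$ enter through Prokhorov's theorem.
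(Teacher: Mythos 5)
Your proposal is correct and is exactly the paper's argument: the paper's proof of Lemma \ref{lem:optBox1} is a one-line citation of Proposition \ref{prop:transportBox}, Lemma \ref{lem:disConti}, and the weak compactness of $\Pi(m_X,m_Y)$, which are precisely the three ingredients you assemble. You have merely made explicit the routine steps (minimizing sequence, Prokhorov's theorem for tightness and compactness, and the fact that $\dP$ metrizes weak convergence) that the paper leaves implicit.
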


\begin{proof}
This follows from Proposition \ref{prop:transportBox} and Lemma \ref{lem:disConti}, and the weak compactness of $\Pi(m_X,m_Y)$.
\end{proof}

\begin{lem}\label{lem:disLSC}
Let $X$ and $Y$ be two metric spaces. Let $X\times Y$ be the product space equipped with the $l^1$-metric. Let $S,S_n\subset X\times Y$, $n=1,2,\dots$, be closed subsets. If $S_n$ converges weakly to $S$ as $n\to\infty$, then we have
\[
\dis S\le \liminf_{n\to\infty}\dis S_n.
\]
\end{lem}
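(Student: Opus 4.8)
The plan is to follow the proof of Lemma \ref{lem:disDeltaLSC} almost verbatim, replacing the diagonal distortion $\dis_\Delta$ by the genuine distortion $\dis$ and replacing the elementary estimate of Lemma \ref{lem:disDiagonalIneq} by its two-sided counterpart, Lemma \ref{lem:disIneq}, which reads $\dis U_t(S)\le \dis S+2t$. First I would unwind the definition of weak Hausdorff convergence: since $S_n$ converges weakly to $S$, condition \eqref{dfn:wH1} gives
\[
S\subset\Bigl\{(x,y)\in X\times Y\mid \lim_{n\to\infty}d_{X\times Y}((x,y),S_n)=0\Bigr\}=\bigcap_{\varepsilon>0}\bigcup_n\bigcap_{k\ge n}U_\varepsilon(S_k).
\]

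From here I would run the same chain of inequalities as in Lemma \ref{lem:disDeltaLSC}. Monotonicity of $\dis$ under inclusion lets me replace $S$ by the right-hand set; letting $\varepsilon\to+0$, using that the relevant quantity is monotone in $\varepsilon$, reduces matters to the sets $\bigcup_n\bigcap_{k\ge n}U_\varepsilon(S_k)$; monotonicity again bounds the distortion of $\bigcap_{k\ge n}U_\varepsilon(S_k)$ by $\inf_{k\ge n}\dis U_\varepsilon(S_k)$; and the identity $\sup_n\inf_{k\ge n}a_k=\liminf_n a_n$ together with Lemma \ref{lem:disIneq}, in the form $\dis U_\varepsilon(S_k)\le \dis S_k+2\varepsilon$, closes the estimate. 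Collecting these and sending $\varepsilon\to+0$ yields $\dis S\le\liminf_{n\to\infty}\dis S_n$, as desired.

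The one step that genuinely differs from the $\dis_\Delta$ case, and which I expect to be the main obstacle, is the equality
\[
\dis\Bigl(\bigcup_n\bigcap_{k\ge n}U_\varepsilon(S_k)\Bigr)=\sup_n\dis\Bigl(\bigcap_{k\ge n}U_\varepsilon(S_k)\Bigr).
\]
For $\dis_\Delta$ this is automatic, since $\dis_\Delta$ is a supremum over single points of the set and hence always commutes with unions. The genuine distortion $\dis$, by contrast, is a supremum over \emph{pairs} of points, and for a general union $\bigcup_n A_n$ the two points may be chosen from distinct members $A_i,A_j$, so $\dis(\bigcup_n A_n)$ need not equal $\sup_n\dis A_n$. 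The identity is rescued here by the observation that $A_n:=\bigcap_{k\ge n}U_\varepsilon(S_k)$ is \emph{increasing} in $n$; consequently any two points of $\bigcup_n A_n$ already lie in a common $A_N$, so every pair competing for $\dis(\bigcup_n A_n)$ competes for some $\dis A_N$. I would make this monotonicity explicit and record it as the crux of the argument, after which the remaining inequalities reduce to the routine manipulations already carried out in Lemma \ref{lem:disDeltaLSC}.
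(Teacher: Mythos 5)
Your proposal is correct and takes essentially the same route as the paper, whose entire proof of this lemma is the instruction to repeat the argument of Lemma \ref{lem:disDeltaLSC} with Lemma \ref{lem:disIneq} in place of Lemma \ref{lem:disDiagonalIneq}. The one point you single out as the crux — that $\dis$, unlike $\dis_\Delta$, is a supremum over pairs, so the interchange $\dis\bigl(\bigcup_n\bigcap_{k\ge n}U_\varepsilon(S_k)\bigr)=\sup_n\dis\bigl(\bigcap_{k\ge n}U_\varepsilon(S_k)\bigr)$ needs the fact that the sets $\bigcap_{k\ge n}U_\varepsilon(S_k)$ increase in $n$ — is precisely the step the paper leaves implicit, and your justification of it is correct.
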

\begin{proof}
By Lemma \ref{lem:disIneq}, we prove in the same way as Lemma \ref{lem:disDeltaLSC}.
\end{proof}

\begin{lem}\label{lem:boxOptimalS}
Let $X$ and $Y$ be two mm-spaces and $\pi$ a transport plan between $m_X$ and $m_Y$. Then there exists a closed subset $S\subset X\times Y$ such that $\dis \pi=\max\{1-\pi(S),\dis S \}$.
\end{lem}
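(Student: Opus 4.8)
The plan is to imitate the proof of Lemma \ref{lem:disDeltaS_Opt} almost verbatim, since the only differences are cosmetic: the ambient space is now the $l^1$-product $X\times Y$ rather than $X\times X$, and the functional $\dis_\Delta$ is replaced by $\dis$. First I would invoke the definition of $\dis\pi$ (Definition \ref{dfn:disTrans}) to produce a sequence $\{S_n\}$ of closed subsets of $X\times Y$ that is minimizing for the infimum, that is,
\[
\max\{\dis S_n,1-\pi(S_n)\}<\dis\pi+\frac1n
\]
for every $n$.

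Next I would extract a limit of these sets. Because $X$ and $Y$ are complete separable metric spaces, the product $X\times Y$ is again separable, hence second countable, so Theorem \ref{thm:weakHcpt} applies and furnishes an increasing map $\iota\colon\N\to\N$ together with a closed set $S\subset X\times Y$ such that $S_{\iota(n)}$ converges weakly to $S$. This $S$ is the candidate minimizer.

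The heart of the argument is then the semicontinuity estimate. I would use Lemma \ref{lem:measLSC} to obtain $\pi(S)\ge\limsup_{n}\pi(S_{\iota(n)})$, which makes $1-\pi(S)$ lower semicontinuous along the subsequence, and Lemma \ref{lem:disLSC} to obtain $\dis S\le\liminf_{n}\dis S_{\iota(n)}$. Since the maximum of two lower semicontinuous quantities is lower semicontinuous, these combine to give
\[
\max\{\dis S,1-\pi(S)\}\le\liminf_{n\to\infty}\max\{\dis S_{\iota(n)},1-\pi(S_{\iota(n)})\}\le\dis\pi,
\]
where the last inequality uses the minimizing property above. The reverse inequality $\dis\pi\le\max\{\dis S,1-\pi(S)\}$ is immediate from the definition of $\dis\pi$ as an infimum over all closed sets, so equality holds and $S$ is the desired minimizer.

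I do not expect a genuine obstacle here, as the two preparatory lemmas have already isolated exactly the lower semicontinuity needed and the compactness theorem supplies the limit. The only point requiring a moment's care is the applicability of Theorem \ref{thm:weakHcpt}, which hinges on second countability of $X\times Y$; this is precisely where the standing separability assumption on mm-spaces enters.
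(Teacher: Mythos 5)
Your proposal is correct and coincides with the paper's own argument: the paper proves this lemma by exactly the route you describe, namely repeating the proof of Lemma \ref{lem:disDeltaS_Opt} (minimizing sequence, weak Hausdorff compactness via Theorem \ref{thm:weakHcpt}, then Lemmas \ref{lem:measLSC} and \ref{lem:disLSC} for the semicontinuity estimates) with $\dis$ in place of $\dis_\Delta$ on the $l^1$-product $X\times Y$. Your added remark on second countability of $X\times Y$ correctly identifies where the separability assumption is used.
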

\begin{proof}
By Lemma \ref{lem:disLSC}, we prove in the same way as Lemma \ref{lem:disDeltaS_Opt}.
\end{proof}

\begin{proof}[Proof of Theorem  {\rm \ref{thm:boxOpt}}]
Theorem \ref{thm:boxOpt} follows from Proposition \ref{prop:transportBox} and Lemmas \ref{lem:optBox1} and \ref{lem:boxOptimalS}.
\end{proof}
\section{Observable distance via optimal transport}
\begin{dfn}
Let $X$ and $Y$ be two mm-spaces.
We put
\[
\dconcpi\pi(X,Y):=d_\mathrm{H}^{\dkf\pi}(\pr_1^*\Lip_1(X),\pr_2^*\Lip_1(Y))
\]
for a transport plan $\pi\in\Pi(m_X,m_Y)$.
\end{dfn}

The goal of this section is to prove the following Theorem \ref{thm:optConc}.
\begin{thm}\label{thm:optConc}
We have
\[
\dconc(X,Y)=\min_{\pi\in\Pi(m_X,m_Y)}\dconcpi\pi(X,Y)
\]
for any two mm-spaces $X$ and $Y$.
\end{thm}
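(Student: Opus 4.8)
The plan is to follow the same two-stage strategy used for Theorem \ref{thm:boxOpt}: first establish the representation with an infimum over transport plans, and then upgrade the infimum to a minimum by proving that $\pi\mapsto\dconcpi\pi(X,Y)$ is continuous on the weakly compact set $\Pi(m_X,m_Y)$. The bridge between parameters and transport plans is again Lemma \ref{lem:paraPlan}, so the first stage reduces to an analog of Lemma \ref{lem:disBox}.

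For that analog I would fix two parameters $\varphi\colon I\to X$ and $\psi\colon I\to Y$, set $\pi:=(\varphi,\psi)_*\cL^1$, and consider the pullback map $T\colon h\mapsto h\circ(\varphi,\psi)$ sending $\pi$-measurable functions on $X\times Y$ to $\cL^1$-measurable functions on $I$. The central observation is that $T$ preserves the Ky Fan metric: since
\[
\cL^1(\{s\in I\mid |h_1(\varphi(s),\psi(s))-h_2(\varphi(s),\psi(s))|>\ep\})=\pi(\{z\in X\times Y\mid |h_1(z)-h_2(z)|>\ep\})
\]
by $\pi=(\varphi,\psi)_*\cL^1$, we get $\dkf{\cL^1}(Th_1,Th_2)=\dkf\pi(h_1,h_2)$ for all $h_1,h_2$. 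Moreover $T$ sends $\pr_1^*\Lip_1(X)$ onto $\varphi^*\Lip_1(X)$ and $\pr_2^*\Lip_1(Y)$ onto $\psi^*\Lip_1(Y)$, because $(f\circ\pr_1)\circ(\varphi,\psi)=f\circ\varphi$. Since a distance-preserving map that carries $A$ onto $A'$ and $B$ onto $B'$ satisfies $d_{\rm H}(A',B')=d_{\rm H}(A,B)$, I obtain $d_{\rm H}^{\rm KF}(\varphi^*\Lip_1(X),\psi^*\Lip_1(Y))=\dconcpi\pi(X,Y)$. Combining this identity with Lemma \ref{lem:paraPlan} yields $\dconc(X,Y)=\inf_{\pi\in\Pi(m_X,m_Y)}\dconcpi\pi(X,Y)$.

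The second stage is the existence of a minimizer, and here the real work is the continuity of $\pi\mapsto\dconcpi\pi(X,Y)$. I would prove the uniform estimate
\[
|\dkf\pi(h_1,h_2)-\dkf{\pi'}(h_1,h_2)|\le 2\dP(\pi,\pi')
\]
valid for every $h_1\in\pr_1^*\Lip_1(X)$ and $h_2\in\pr_2^*\Lip_1(Y)$. The point is that such $h_1,h_2$ are $1$-Lipschitz on the $l^1$-product $X\times Y$, so $h_1-h_2$ is $2$-Lipschitz; hence for $A_\ep:=\{|h_1-h_2|>\ep\}$ one has $U_t(A_\ep)\subset A_{\ep-2t}$, and feeding this into the Prohorov inequality $\pi(A_\ep)\le\pi'(U_t(A_\ep))+t$ for $t>\dP(\pi,\pi')$ gives the bound exactly as in Lemma \ref{lem:disDiagonalConti}. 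A uniform perturbation of the underlying Ky Fan metric perturbs each nested $\sup$–$\inf$, and hence the Hausdorff distance, by at most the same amount, so $|\dconcpi\pi(X,Y)-\dconcpi{\pi'}(X,Y)|\le 2\dP(\pi,\pi')$. Since the Prohorov metric metrizes weak convergence, this map is weakly continuous, and the weak compactness of $\Pi(m_X,m_Y)$ forces the infimum to be attained.

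The main obstacle I anticipate is this second stage, specifically producing the uniform-in-$(h_1,h_2)$ control of the Ky Fan metric under weak perturbation of $\pi$. Because of the Hausdorff-of-Ky-Fan structure one cannot merely invoke pointwise continuity of $\pi\mapsto\dkf\pi(h_1,h_2)$; the estimate must be uniform over the two function classes so that it survives both quantifiers of the Hausdorff distance. The resolution is the observation above that every competitor is $1$-Lipschitz on the $l^1$-product, which makes the $2$-Lipschitz constant of $h_1-h_2$—and hence the constant in the Prohorov-to-Ky Fan passage—independent of the chosen functions. With that uniform bound in hand, the compactness argument is identical to the one concluding Theorem \ref{thm:boxOpt}.
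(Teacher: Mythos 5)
Your proposal is correct and follows essentially the same route as the paper: the parameter-to-plan reduction via Lemma \ref{lem:paraPlan} together with the pushforward invariance of the Ky Fan metric (the paper's Lemmas \ref{lem:kfCoupParameter} and \ref{lem:dconcCoupParameter}, yielding Proposition \ref{prop:transportConc}), followed by the uniform estimate $|\dconcpi\pi(X,Y)-\dconcpi{\pi'}(X,Y)|\le 2\dP(\pi,\pi')$ obtained from the $2$-Lipschitz control of differences of $1$-Lipschitz competitors (the paper's Lemmas \ref{lem:kfWeakConti} and \ref{lem:dconcWeakConti}), and weak compactness of $\Pi(m_X,m_Y)$. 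Your emphasis on why the estimate must be uniform over the function classes, rather than pointwise in $(h_1,h_2)$, is exactly the point the paper's Lipschitz argument secures.
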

\begin{lem}\label{lem:kfCoupParameter}
Let $\varphi:I\to X$ and $\psi:I\to Y$ be two parameters of two mm-spaces $X$ and $Y$, respectively. Then we have
\[
\dkf{(\varphi,\psi)_*\leb}(\mathrm{pr}_1^*f,\mathrm{pr}_2^*g)=\dkf{\leb}(\varphi^*f,\psi^*g),
\]
where we put $F^*G:=G\circ F$ for any maps $F\colon A\to B$ and $G\colon B\to C$.
\end{lem}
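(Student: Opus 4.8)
The plan is to reduce the equality of the two Ky Fan distances to a single change-of-variables identity for the push-forward measure $\pi:=(\varphi,\psi)_*\leb$ on $X\times Y$ (which is a transport plan by Lemma \ref{lem:paraPlan}). The essential observation is that composing the projected functions on $X\times Y$ with the map $(\varphi,\psi)\colon I\to X\times Y$ recovers exactly the parametrized functions on $I$: since $\pr_1\circ(\varphi,\psi)=\varphi$ and $\pr_2\circ(\varphi,\psi)=\psi$, one has $(\pr_1^*f)\circ(\varphi,\psi)=f\circ\varphi=\varphi^*f$ and likewise $(\pr_2^*g)\circ(\varphi,\psi)=\psi^*g$. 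Consequently the two defect functions agree after this substitution, i.e. $|\pr_1^*f-\pr_2^*g|\circ(\varphi,\psi)=|\varphi^*f-\psi^*g|$ pointwise on $I$.

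First I would fix an arbitrary real number $\ep\ge 0$ and compare the exceptional sets appearing in the two Ky Fan infima. On the $X\times Y$ side put
\[
A_\ep:=\{(x,y)\in X\times Y\mid |f(x)-g(y)|>\ep\},
\]
which is a Borel set since $f$ and $g$ are continuous (being $1$-Lipschitz) and the projections are continuous. By the compositional identity above, the corresponding exceptional set on $I$ is precisely the preimage
\[
\{s\in I\mid |\varphi^*f(s)-\psi^*g(s)|>\ep\}=(\varphi,\psi)^{-1}(A_\ep),
\]
and this preimage is Borel because $\varphi$ and $\psi$ are Borel measurable. Hence, directly from the definition of the push-forward measure,
\[
\leb\bigl((\varphi,\psi)^{-1}(A_\ep)\bigr)=(\varphi,\psi)_*\leb(A_\ep)=\pi(A_\ep),
\]
so for every $\ep\ge 0$ the measures of the two exceptional sets coincide.

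It then follows that the inequality $\leb(\{s\in I\mid|\varphi^*f(s)-\psi^*g(s)|>\ep\})\le\ep$ holds if and only if $\pi(A_\ep)\le\ep$ holds; that is, the two sets of admissible $\ep$ in the definition of the Ky Fan metric are identical. Taking the infimum of each yields the asserted equality $\dkf{\pi}(\pr_1^*f,\pr_2^*g)=\dkf{\leb}(\varphi^*f,\psi^*g)$. I do not expect a genuine obstacle here: the entire content is the single push-forward identity, and the only point requiring any care is the measurability of $A_\ep$ and of its preimage, which is immediate from the continuity of $f,g$ and the Borel measurability of $\varphi,\psi$. This lemma is the Ky Fan analogue of Lemma \ref{lem:disBox} (which rewrites the box distance distortion through $(\varphi,\psi)_*\leb$), and it will feed into the proof of Theorem \ref{thm:optConc} in exactly the same fashion, by transporting the Hausdorff distance computation from parameters to transport plans.
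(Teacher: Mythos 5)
Your proposal is correct and takes essentially the same approach as the paper's proof, which is a three-line computation resting on the change-of-variables identity $\dkf{(\varphi,\psi)_*\leb}(u,v)=\dkf{\leb}(u\circ(\varphi,\psi),v\circ(\varphi,\psi))$ together with $\pr_1\circ(\varphi,\psi)=\varphi$ and $\pr_2\circ(\varphi,\psi)=\psi$. The only difference is that you verify the change-of-variables step explicitly by matching the exceptional sets and their measures, whereas the paper treats it as immediate.
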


\begin{proof}
We have
\begin{align*}
\dkf{(\varphi,\psi)_*\leb}(\mathrm{pr}_1^*f,\mathrm{pr}_2^*g)&=\dkf{\leb}((\varphi,\psi)^*(\mathrm{pr}_1^*f),(\varphi,\psi)^*(\mathrm{pr}_2^*g))\\
&=\dkf{\leb}((\varphi,\psi)^*\mathrm{pr}_1)^*f,((\varphi,\psi)^*\mathrm{pr}_2)^*g)\\
&=\dkf{\leb}(\varphi^*f,\psi^*g).
\end{align*}
This completes the proof.
\end{proof}

\begin{lem}\label{lem:dconcCoupParameter}
Let $\varphi:I\to X$ and $\psi:I\to Y$ be two parameters of two mm-spaces $X$ and $Y$ respectively. Then we have
\[
d_\mathrm{H}^{\rm KF}(\varphi^*\Lip_1(X),\psi^*\Lip_1(Y))=\dconcpi{(\varphi,\psi)_*\leb}(X,Y).
\]
\end{lem}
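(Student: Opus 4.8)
The plan is to unfold the definition of the Hausdorff distance on both sides and reduce the stated equality to a term-by-term application of Lemma \ref{lem:kfCoupParameter}. Write $\pi:=(\varphi,\psi)_*\leb$ for brevity. By definition the right-hand side $\dconcpi{\pi}(X,Y)$ is the Hausdorff distance $d_\mathrm{H}^{\dkf\pi}(\pr_1^*\Lip_1(X),\pr_2^*\Lip_1(Y))$, while the left-hand side is $d_\mathrm{H}^{\rm KF}(\varphi^*\Lip_1(X),\psi^*\Lip_1(Y))$ computed with respect to $\dkf\leb$. Both are Hausdorff distances between sets of real-valued measurable functions, so the idea is to expand each via the standard formula and then compare the resulting expressions entry by entry.

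First I would expand both Hausdorff distances using
\[
d_\mathrm{H}(A,B)=\max\Bigl\{\sup_{a\in A}\inf_{b\in B}\rho(a,b),\ \sup_{b\in B}\inf_{a\in A}\rho(a,b)\Bigr\}.
\]
Since every element of $\varphi^*\Lip_1(X)$ has the form $\varphi^*f$ with $f\in\Lip_1(X)$, and likewise for the three remaining function sets, each supremum or infimum over elements of these sets may be rewritten as a supremum or infimum over $f\in\Lip_1(X)$ or $g\in\Lip_1(Y)$ of the corresponding Ky Fan distance. In this way the left-hand side becomes
\[
\max\Bigl\{\sup_{f}\inf_{g}\dkf\leb(\varphi^*f,\psi^*g),\ \sup_{g}\inf_{f}\dkf\leb(\varphi^*f,\psi^*g)\Bigr\},
\]
and the right-hand side becomes the identical expression with each $\dkf\leb(\varphi^*f,\psi^*g)$ replaced by $\dkf\pi(\pr_1^*f,\pr_2^*g)$. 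The decisive step is then to invoke Lemma \ref{lem:kfCoupParameter}, which states precisely that $\dkf{(\varphi,\psi)_*\leb}(\pr_1^*f,\pr_2^*g)=\dkf\leb(\varphi^*f,\psi^*g)$ for all $f\in\Lip_1(X)$ and $g\in\Lip_1(Y)$. Substituting this identity makes the two unfolded expressions coincide, which finishes the argument.

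There is no genuine obstacle here; the one point that merely requires care is the passage from suprema and infima over the image sets $\varphi^*\Lip_1(X)$ and $\psi^*\Lip_1(Y)$ to suprema and infima indexed by $\Lip_1(X)$ and $\Lip_1(Y)$. This is legitimate because $\varphi^*\Lip_1(X)$ is by definition the set $\set{\varphi^*f\mid f\in\Lip_1(X)}$, so taking an extremum over its elements agrees with taking the extremum over $f\in\Lip_1(X)$ of the corresponding quantity, and no injectivity of the map $f\mapsto\varphi^*f$ is needed. The measurability of $\varphi^*f$ and $\pr_1^*f$ that makes the Ky Fan distances well defined is already subsumed by Lemma \ref{lem:kfCoupParameter}, so the proof is essentially a matter of bookkeeping once the definitions are unwound.
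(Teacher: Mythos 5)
Your proposal is correct and is essentially the paper's own argument: the paper proves this lemma by simply citing Lemma \ref{lem:kfCoupParameter}, and your unfolding of both Hausdorff distances into sup--inf expressions indexed by $\Lip_1(X)$ and $\Lip_1(Y)$, followed by the term-by-term substitution $\dkf{\leb}(\varphi^*f,\psi^*g)=\dkf{(\varphi,\psi)_*\leb}(\pr_1^*f,\pr_2^*g)$, is exactly the bookkeeping that citation leaves implicit. Your remark that surjectivity of $f\mapsto\varphi^*f$ (rather than injectivity) is what justifies re-indexing the extrema is a correct and worthwhile clarification.
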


\begin{proof}
This follows from Lemma \ref{lem:kfCoupParameter}.
\end{proof}

\begin{prop}\label{prop:transportConc}
We have
\[
\dconc(X,Y)=\inf_{\pi\in\Pi(m_X,m_Y)}\dconcpi\pi(X,Y)
\]
for any two mm-spaces $X$ and $Y$.
\end{prop}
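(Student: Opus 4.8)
The plan is to follow the argument for Proposition \ref{prop:transportBox} essentially verbatim, replacing the box-distance data with the concentration data. All of the genuine content has already been isolated in Lemmas \ref{lem:kfCoupParameter}, \ref{lem:dconcCoupParameter}, and \ref{lem:paraPlan}, so the proof amounts to a change of variables from parameters to transport plans.

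Concretely, I would begin from the definition
\[
\dconc(X,Y)=\inf_{\varphi,\psi} d_\mathrm{H}^{\rm KF}(\varphi^*\Lip_1(X),\psi^*\Lip_1(Y)),
\]
where $\varphi$ and $\psi$ range over all parameters of $X$ and $Y$. Lemma \ref{lem:dconcCoupParameter} rewrites each Hausdorff--Ky~Fan distance attached to a pair of parameters as the functional $\dconcpi{(\varphi,\psi)_*\leb}(X,Y)$ attached to the induced pushforward plan, giving
\[
\dconc(X,Y)=\inf_{\varphi,\psi}\dconcpi{(\varphi,\psi)_*\leb}(X,Y).
\]
Then Lemma \ref{lem:paraPlan}, which supplies the set equality $\Pi(m_X,m_Y)=\{(\varphi,\psi)_*\cL^1\mid \varphi,\psi\}$, lets me replace the infimum over parameter pairs by an infimum over transport plans, since $\dconcpi\pi(X,Y)$ depends on $(\varphi,\psi)$ only through the induced plan $\pi=(\varphi,\psi)_*\leb$:
\[
\inf_{\varphi,\psi}\dconcpi{(\varphi,\psi)_*\leb}(X,Y)=\inf_{\pi\in\Pi(m_X,m_Y)}\dconcpi\pi(X,Y).
\]
Chaining the two displays yields the asserted identity.

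There is no real obstacle here; the only point that deserves a moment's attention is that Lemma \ref{lem:paraPlan} gives genuine surjectivity of the map $(\varphi,\psi)\mapsto(\varphi,\psi)_*\leb$ onto $\Pi(m_X,m_Y)$, and it is precisely this surjectivity (rather than a mere inclusion) that justifies exchanging $\inf_{\varphi,\psi}$ for $\inf_{\pi}$ without altering the value. As with Proposition \ref{prop:transportBox}, this statement only produces an infimum; upgrading it to a minimum, i.e.\ establishing the existence of an optimal transport plan as in Theorem \ref{thm:optConc}, requires the separate lower-semicontinuity and compactness arguments carried out later.
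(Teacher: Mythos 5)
Your proof is correct and is essentially identical to the paper's: both chain Lemma \ref{lem:dconcCoupParameter} (to rewrite the Hausdorff--Ky Fan distance for a pair of parameters as $\dconcpi{(\varphi,\psi)_*\leb}(X,Y)$) with the surjectivity supplied by Lemma \ref{lem:paraPlan} to convert the infimum over parameter pairs into an infimum over transport plans. Your remark that only the surjectivity of $(\varphi,\psi)\mapsto(\varphi,\psi)_*\leb$ justifies the exchange of infima, and that upgrading to a minimum is deferred to Theorem \ref{thm:optConc}, is exactly the right reading of the division of labor in the paper.
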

\begin{proof}
By Lemmas \ref{lem:paraPlan} and \ref{lem:dconcCoupParameter}, we have
\begin{align*}
\dconc(X,Y)&=\inf_{\varphi,\psi}d_\mathrm{H}^{\rm KF}(\varphi^*\Lip_1(X),\psi^*\Lip_1(Y))\\
&=\inf_{\varphi,\psi}\dconcpi{(\varphi,\psi)_*\leb}(X,Y)\\
&=\inf_{\pi\in\Pi(m_X,m_Y)}\dconcpi\pi(X,Y).
\end{align*}
This completes the proof.
\end{proof}

\begin{lem}\label{lem:kfWeakConti}
Let $X$ and $Y$ be two metric spaces. Let $f$ and $g$ be two 1-Lipschitz maps from $X$ to $Y$.
Then we have
\[
|\dkf\mu(f,g)-\dkf\nu(f,g)|\le 2\dP(\mu,\nu)
\]
for any two Borel probability measures $\mu$ and $\nu$ on $X$.
\end{lem}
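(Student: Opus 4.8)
The plan is to follow the template of Lemma \ref{lem:disDiagonalConti}, with the level sets of the gap function playing the role that $\dis_\Delta$ played there. The one new ingredient is a Lipschitz estimate. Define $h\colon X\to\R$ by $h:=|f-g|$ (the maps $f,g$ are real-valued, as the members of $\Lip_1$ are). Since $f$ and $g$ are $1$-Lipschitz,
\[
|h(x)-h(x')|\le |f(x)-f(x')|+|g(x)-g(x')|\le 2d_X(x,x')
\]
for all $x,x'\in X$, so $h$ is $2$-Lipschitz. I would first rewrite the Ky Fan metric as $\dkf\mu(f,g)=\inf\set{\ep\ge 0\mid \mu(\set{h>\ep})\le\ep}$ and record the elementary fact that $\mu(\set{h>\ep})\le\ep$ holds for every $\ep>\dkf\mu(f,g)$ (and likewise with $\nu$), by monotonicity of $\ep\mapsto\set{h>\ep}$.

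Next I would fix $t>\dP(\mu,\nu)$ and $\ep>\dkf\nu(f,g)$ and exploit the $2$-Lipschitz property exactly as Lemma \ref{lem:disIneq} is used for the distortion. The key inclusion is
\[
U_t(\set{h\le\ep})\subset\set{h\le\ep+2t},
\]
which holds because any point within distance $t$ of a point where $h\le\ep$ has $h$-value at most $\ep+2t$. Applying the Prohorov inequality $\mu(U_t(A))\ge\nu(A)-t$ to the closed set $A:=\set{h\le\ep}$, and using $\nu(A)=1-\nu(\set{h>\ep})\ge 1-\ep$, I would obtain
\[
\mu(\set{h\le\ep+2t})\ge\mu(U_t(A))\ge\nu(A)-t\ge 1-\ep-t.
\]
Hence $\mu(\set{h>\ep+2t})\le\ep+t\le\ep+2t$, and by the rewritten definition of the Ky Fan metric this gives $\dkf\mu(f,g)\le\ep+2t$.

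Finally, letting $\ep\downarrow\dkf\nu(f,g)$ and $t\downarrow\dP(\mu,\nu)$ yields $\dkf\mu(f,g)\le\dkf\nu(f,g)+2\dP(\mu,\nu)$, and exchanging the roles of $\mu$ and $\nu$ (using $\dP(\mu,\nu)=\dP(\nu,\mu)$) gives the reverse bound, whence the claim. I do not expect a genuine obstacle here; the only point requiring care is the bookkeeping of the constant $2$, which enters through the $2$-Lipschitz constant of $h$ and thereby turns the $t$-neighborhood into a $2t$-widening of the level set, mirroring the factor $2$ in Lemma \ref{lem:disIneq}.
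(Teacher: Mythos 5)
Your argument is correct and is essentially the paper's proof: both rest on the observation that the pointwise gap between $f$ and $g$ is $2$-Lipschitz, so that a Prohorov $t$-neighborhood shifts a level set of the gap by at most $2t$; the paper phrases this with superlevel sets (via $F(x):=d_Y(f(x),g(x))$ and the inclusion $U_r(F^{-1}(A))\subset F^{-1}(U_{2r}(A))$) while you use sublevel sets and the Prohorov inequality in the form given in the definition, which is an immaterial difference. One correction: the lemma concerns $1$-Lipschitz maps into an arbitrary metric space $Y$, not real-valued ones (your parenthetical misreads the statement), so the gap function must be $h(x):=d_Y(f(x),g(x))$ rather than $|f-g|$; since $|h(x)-h(x')|\le d_Y(f(x),f(x'))+d_Y(g(x),g(x'))\le 2d_X(x,x')$, your Lipschitz estimate and every subsequent step go through verbatim under this replacement.
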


\begin{proof}
Take any two real numbers $r>\dP(\mu,\nu)$ and $s>\dkf\nu(f,g)$.
We put $F(x):=d_Y(f(x),g(x))$ for any point $x\in X$.
$F$ is a Lipschitz function with Lipschitz constant at most $2$.
Hence we have $U_r(F^{-1}(A))\subset F^{-1}(U_{2r}(A))$ for any subset $A\subset \R$.
Since $s>\dkf\nu(f,g)$, we have 
\[
\nu(F^{-1}((s,\infty)))\le s.
\]
Therefore we have
\begin{align*}
\mu(F^{-1}((2r+s,\infty)))&\le \nu(U_r(F^{-1}((2r+s,\infty))))+r\\
&\le \nu(F^{-1}(U_{2r}((2r+s,\infty))))+r\\
&=\nu(F^{-1}((s,\infty)))+r\\
&\le s+r.
\end{align*}
Hence we obtain $\dkf\mu(f,g)\le \dkf\nu(f,g)+2\dP(\mu,\nu)$. By exchanging $\mu$ for $\nu$, we also obtain $\dkf\nu(f,g)\le \dkf\mu(f,g)+2\dP(\mu,\nu)$.
\end{proof}

\begin{lem}\label{lem:dconcWeakConti}
Let $X$ and $Y$ be two metric spaces. We assume that $X\times Y$ is a metric space satisfying that the first projection $\pr_1:X\times Y\to X$ and the second projection $\pr_2:X\times Y\to Y$ are both 1-Lipschitz. Then we have
\[
|\dconcpi\pi(X,Y)-\dconcpi{\pi'}(X,Y)|\le 2\dP(\pi,\pi')
\]
for any two Borel probability measures $\pi$ and $\pi'$ on $X\times Y$.
\end{lem}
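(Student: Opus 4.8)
The plan is to reduce this Hausdorff-distance estimate to the pointwise Ky Fan estimate of Lemma \ref{lem:kfWeakConti} and then to lift that bound through the definition of the Hausdorff distance. The essential observation is that $\dconcpi\pi(X,Y)$ and $\dconcpi{\pi'}(X,Y)$ are Hausdorff distances between the \emph{same} two families of functions, computed with respect to two metrics $\dkf\pi$ and $\dkf{\pi'}$ that are uniformly close on the relevant pairs.

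First I would record that, under the hypothesis that $\pr_1$ and $\pr_2$ are $1$-Lipschitz, every member of $\pr_1^*\Lip_1(X)$ and of $\pr_2^*\Lip_1(Y)$ is a $1$-Lipschitz function $X\times Y\to\R$: for $f\in\Lip_1(X)$ and points $p,q\in X\times Y$ we have $|f(\pr_1 p)-f(\pr_1 q)|\le d_X(\pr_1 p,\pr_1 q)\le d_{X\times Y}(p,q)$, and the second factor is handled identically. With this in hand, I would fix $f\in\pr_1^*\Lip_1(X)$ and $g\in\pr_2^*\Lip_1(Y)$ and apply Lemma \ref{lem:kfWeakConti}, taking the metric space $X\times Y$ in the role of its $X$, the target $\R$ in the role of its $Y$, and the measures $\pi,\pi'$ in the role of $\mu,\nu$. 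This is legitimate by the previous sentence and yields
\[
|\dkf\pi(f,g)-\dkf{\pi'}(f,g)|\le 2\dP(\pi,\pi')
\]
for every such cross pair $(f,g)$.

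Finally I would transfer this uniform two-sided bound to the Hausdorff distance. Writing $c:=2\dP(\pi,\pi')$, $A:=\pr_1^*\Lip_1(X)$ and $B:=\pr_2^*\Lip_1(Y)$, the displayed estimate says that $\dkf\pi$ and $\dkf{\pi'}$ differ by at most $c$ on every pair in $A\times B$. Since the Hausdorff distance $d_\mathrm{H}$ is assembled only from such cross distances, for each $a\in A$ one has $\inf_{b\in B}\dkf\pi(a,b)\le c+\inf_{b\in B}\dkf{\pi'}(a,b)$; taking the supremum over $a\in A$ and arguing symmetrically for the reverse directed distance gives $d_\mathrm{H}^{\dkf\pi}(A,B)\le d_\mathrm{H}^{\dkf{\pi'}}(A,B)+c$, and exchanging the roles of $\pi$ and $\pi'$ completes the proof.

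I expect no genuine obstacle here: the analytic content is entirely contained in Lemma \ref{lem:kfWeakConti}, and the remaining work is the bookkeeping of verifying the $1$-Lipschitz property so that lemma applies and of propagating a uniform metric perturbation through the nested infima and suprema defining the Hausdorff distance. The only point requiring a little care is that $\dkf\pi$ and $\dkf{\pi'}$ are a priori metrics on different spaces of equivalence classes of measurable functions; I would avoid this by working throughout with the genuine representative functions $f\circ\pr_1$ and $g\circ\pr_2$, on which both Ky Fan metrics are simultaneously well defined.
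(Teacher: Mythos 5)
Your proof is correct and takes essentially the same route as the paper: both arguments reduce the statement to Lemma \ref{lem:kfWeakConti} applied to cross pairs $(\pr_1^*f,\pr_2^*g)$, using the $1$-Lipschitz hypothesis on the projections to make that lemma applicable, and then propagate the resulting uniform bound through the Hausdorff distance. The only cosmetic difference is that you use the sup-inf formulation of the Hausdorff distance, whereas the paper phrases the same bookkeeping via inclusions into $(\varepsilon+2r)$-neighborhoods $U_{\varepsilon+2r}^{\dkf{\pi'}}$.
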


\begin{proof}
By symmetry, it suffices to prove $\dconcpi{\pi'}(X,Y)\le\dconcpi{\pi}(X,Y)+2\dP(\pi,\pi')$.
Put $r:=\dP(\pi,\pi')$ and take any real number $\varepsilon>\dconcpi\pi(X,Y)$.
Let us prove
\begin{equation}\label{eq:dconcWeakConti}
\pr_1^*\Lip_1(X)\subset U_{\varepsilon+2r}^{\dkf{\pi'}}(\pr_2^*\Lip_1(Y)).
\end{equation}

We take any function $f\in\Lip_1(X)$.
Since $\varepsilon>\dconcpi\pi(X,Y)$, there exists $g\in\Lip_1(Y)$ such that $\dkf\pi(\pr_1^*f,\pr_2^*g)<\varepsilon$.
By Lemma \ref{lem:kfWeakConti}, we have
\begin{align*}
\dkf{\pi'}(\pr_1^*f,\pr_2^*g)&\le \dkf\pi(\pr_1^*f,\pr_2^*g)+2r\\
&<\varepsilon+2r.
\end{align*}
Hence we obtain \eqref{eq:dconcWeakConti}.
We also obtain $\pr_2^*\Lip_1(Y)\subset U_{\varepsilon+2r}^{\dkf{\pi'}}(\pr_1^*\Lip_1(X))$ in the same way.
Since we have $\dconcpi{\pi'}(X,Y)\le \varepsilon +2r$, we obtain $\dconcpi{\pi'}(X,Y)\le\dconcpi{\pi}(X,Y)+2\dP(\pi,\pi')$.
This completes the proof.
\end{proof}

\begin{lem}\label{lem:ExistsConcOpt}
Let $X$ and $Y$ be two mm-spaces. Then there exists a transport plan $\pi\in\Pi(m_X,m_Y)$ such that
$\dconc(X,Y)=\dconcpi\pi(X,Y)$.
\end{lem}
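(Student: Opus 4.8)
The plan is to mirror exactly the existence argument already carried out for the box distance in Lemma \ref{lem:optBox1}, since the structural ingredients are identical. The statement asserts that the infimum
\[
\dconc(X,Y)=\inf_{\pi\in\Pi(m_X,m_Y)}\dconcpi\pi(X,Y)
\]
from Proposition \ref{prop:transportConc} is in fact attained. The two facts I need are: first, that the functional $\pi\mapsto\dconcpi\pi(X,Y)$ is continuous (or at least lower semicontinuous) with respect to the topology in which $\Pi(m_X,m_Y)$ is compact; and second, that $\Pi(m_X,m_Y)$ is indeed compact in that topology. The first is precisely the content of Lemma \ref{lem:dconcWeakConti}, which gives
\[
|\dconcpi\pi(X,Y)-\dconcpi{\pi'}(X,Y)|\le 2\dP(\pi,\pi'),
\]
so $\pi\mapsto\dconcpi\pi(X,Y)$ is $2$-Lipschitz, hence continuous, with respect to the Prohorov metric on measures on $X\times Y$ (where $X\times Y$ carries a metric for which both projections are $1$-Lipschitz, e.g. the $l^1$-metric). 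The second is the weak compactness of $\Pi(m_X,m_Y)$, the same fact invoked in Lemma \ref{lem:optBox1}.

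First I would recall that the Prohorov distance metrizes weak convergence of Borel probability measures on the separable metric space $X\times Y$, and that $\Pi(m_X,m_Y)$ is weakly compact: its tightness follows from the tightness of the marginals $m_X$ and $m_Y$ (each a Borel probability measure on a complete separable space is tight), and it is weakly closed because the marginal constraints $(\pr_1)_*\pi=m_X$ and $(\pr_2)_*\pi=m_Y$ are preserved under weak limits. Then, since $\pi\mapsto\dconcpi\pi(X,Y)$ is continuous on this compact set by Lemma \ref{lem:dconcWeakConti}, it attains its infimum at some $\pi\in\Pi(m_X,m_Y)$. Combining this with Proposition \ref{prop:transportConc} yields $\dconc(X,Y)=\dconcpi\pi(X,Y)$, which is the claim.

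Concretely, I would take a minimizing sequence $\{\pi_n\}\subset\Pi(m_X,m_Y)$ with $\dconcpi{\pi_n}(X,Y)\to\inf_{\pi}\dconcpi\pi(X,Y)=\dconc(X,Y)$, extract a weakly convergent subsequence $\pi_{n_k}\to\pi$ by weak compactness, observe $\pi\in\Pi(m_X,m_Y)$ since the constraint set is closed, and conclude $\dconcpi\pi(X,Y)=\lim_k\dconcpi{\pi_{n_k}}(X,Y)=\dconc(X,Y)$ using the continuity from Lemma \ref{lem:dconcWeakConti}. This is exactly the template of the proof of Lemma \ref{lem:optBox1}, so the one-line proof ``This follows from Proposition \ref{prop:transportConc} and Lemma \ref{lem:dconcWeakConti}, and the weak compactness of $\Pi(m_X,m_Y)$'' is fully justified.

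I do not anticipate a genuine obstacle here, because the observable case is actually \emph{easier} than the box case in one respect: the entire optimization over closed subsets $S$ that complicated the box and Prohorov arguments (requiring the weak Hausdorff compactness of Theorem \ref{thm:weakHcpt} and the lower-semicontinuity lemmas \ref{lem:disDeltaS_Opt}, \ref{lem:boxOptimalS}) is absent. For the observable distance the Hausdorff--Ky Fan distance $\dconcpi\pi(X,Y)$ is already a fully defined functional of $\pi$ alone, with no auxiliary minimization to resolve, so only the single existence-of-minimizing-$\pi$ step remains. The mild point worth stating carefully is simply that the metric chosen on $X\times Y$ must make both projections $1$-Lipschitz, which is what licenses the application of Lemma \ref{lem:dconcWeakConti}; the $l^1$-product metric used throughout the paper satisfies this.
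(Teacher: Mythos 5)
Your proposal is correct and is essentially identical to the paper's own proof, which is the one-line statement ``This follows from Proposition \ref{prop:transportConc}, Lemma \ref{lem:dconcWeakConti}, and the weak compactness of $\Pi(m_X,m_Y)$''; you have simply spelled out the standard minimizing-sequence argument that this line abbreviates. Your observation that no auxiliary optimization over closed sets $S$ is needed here (unlike the box and Prohorov cases) accurately reflects why the paper's proof of this lemma is so short.
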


\begin{proof}
This follows from Proposition \ref{prop:transportConc}, Lemma \ref{lem:dconcWeakConti}, and the weak compactness of $\Pi(m_X,m_Y)$.
\end{proof}
\begin{proof}[Proof of Theorem  {\rm \ref{thm:optConc}}]
This follows from Theorem \ref{thm:optConc} and Lemma \ref{lem:ExistsConcOpt}.
\end{proof}

\section{Eurandom distance via optimal transport}\label{sec:Eur_opt}
The definition of Eurandom metric $d_{\rm Eur}$ is very similar to of box metric $\square$. In this section, we show that the Eurandom metric has optimal transports. 

\begin{dfn}[Eurandom metric]
The Eurandom distance $d_{\rm Eur}$ between two mm-spaces $X$ and $Y$ is defined by
\[
d_{\rm Eur}(X,Y):=\inf_{\pi\in\Pi(m_X,m_Y)}\inf_{\varepsilon\ge 0}\max\{\varepsilon, \pi\otimes\pi(|d_X-d_Y|>\varepsilon)\},
\]
where the set
\[
\{(x_1,y_1,x_2,y_2);|d_X(x_1,x_2)-d_Y(y_1,y_2)|>\varepsilon \}
\]
is abbreviated to
\[
|d_X-d_Y|>\varepsilon.
\]
The product measure of two measures $\mu$ and $\nu$ is indicated as $\mu\otimes \nu$.
\end{dfn}

We define
\[
\dis_{\rm Eur}\mu:=\inf_{\varepsilon\ge 0}\max\{\varepsilon, \mu(|d_X-d_Y|>\varepsilon)\}
\]
for any Borel probability measure $\mu$ on $X\times Y\times X\times Y$.

\begin{lem}\label{lem:disEur}
Let $X$ and $Y$ be two metric spaces and $X\times Y\times X\times Y$ be the product space equipped with  $l^1$-metric. Let $\mu$ and $\nu$ be two Borel measures on $X\times Y\times X\times Y$. Then we have
\[
|\dis_{\rm Eur}\mu -\dis_{\rm Eur}\nu|\le \dP(\mu,\nu).
\]
\end{lem}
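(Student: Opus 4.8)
The plan is to follow the pattern already established in Lemmas \ref{lem:disDiagonalConti} and \ref{lem:kfWeakConti}: rewrite the superlevel sets $\{|d_X-d_Y|>\varepsilon\}$ as preimages of a single auxiliary real-valued function and exploit its Lipschitz continuity together with the Prohorov inequality. Concretely, I would define $\Phi\colon X\times Y\times X\times Y\to\R$ by $\Phi(x_1,y_1,x_2,y_2):=|d_X(x_1,x_2)-d_Y(y_1,y_2)|$, so that the abbreviated set $\{|d_X-d_Y|>\varepsilon\}$ equals $\Phi^{-1}((\varepsilon,\infty))=:A_\varepsilon$ and $\dis_{\rm Eur}\mu=\inf_{\varepsilon\ge 0}\max\{\varepsilon,\mu(A_\varepsilon)\}$. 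This reduces the statement to a continuity estimate for superlevel-set measures of a fixed Lipschitz function, exactly the situation handled before.

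The key preliminary step is that $\Phi$ is $1$-Lipschitz with respect to the $l^1$-metric on $X\times Y\times X\times Y$. Writing $p=(x_1,y_1,x_2,y_2)$ and $p'=(x_1',y_1',x_2',y_2')$, the reverse triangle inequality bounds $|\Phi(p)-\Phi(p')|$ by $|d_X(x_1,x_2)-d_X(x_1',x_2')|+|d_Y(y_1,y_2)-d_Y(y_1',y_2')|$, and a further application of the triangle inequality in $X$ and in $Y$ bounds this by $d_X(x_1,x_1')+d_X(x_2,x_2')+d_Y(y_1,y_1')+d_Y(y_2,y_2')$, which is precisely the $l^1$-distance between $p$ and $p'$. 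Lipschitz continuity then yields the set inclusion $U_t(\Phi^{-1}(B))\subset\Phi^{-1}(U_t(B))$ for every $B\subset\R$; taking $B=(\varepsilon+t,\infty)$ and noting $U_t((\varepsilon+t,\infty))=(\varepsilon,\infty)$ gives the crucial containment $U_t(A_{\varepsilon+t})\subset A_\varepsilon$.

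With these two facts in hand, the estimate runs as in Lemma \ref{lem:disDiagonalConti}. By symmetry it suffices to prove $\dis_{\rm Eur}\nu\le\dis_{\rm Eur}\mu+\dP(\mu,\nu)$. I would fix $t>\dP(\mu,\nu)$ and $s>\dis_{\rm Eur}\mu$; the latter provides some $\varepsilon\ge 0$ with $\varepsilon<s$ and $\mu(A_\varepsilon)<s$. The Prohorov inequality $\nu(A_{\varepsilon+t})\le\mu(U_t(A_{\varepsilon+t}))+t$, combined with the containment $U_t(A_{\varepsilon+t})\subset A_\varepsilon$, gives $\nu(A_{\varepsilon+t})\le\mu(A_\varepsilon)+t<s+t$, while trivially $\varepsilon+t<s+t$. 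Hence $\dis_{\rm Eur}\nu\le\max\{\varepsilon+t,\nu(A_{\varepsilon+t})\}<s+t$, and letting $s\downarrow\dis_{\rm Eur}\mu$ and $t\downarrow\dP(\mu,\nu)$ yields the desired inequality; exchanging the roles of $\mu$ and $\nu$ and using $\dP(\nu,\mu)=\dP(\mu,\nu)$ gives the reverse bound.

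The only genuine point requiring care, and the one I expect to be the main obstacle, is the Lipschitz bookkeeping that fixes the constant. Because the product is equipped with the $l^1$-metric, $\Phi$ turns out to have Lipschitz constant exactly $1$ rather than $2$, which is why the bound here carries no factor of $2$; this should be contrasted with Lemma \ref{lem:kfWeakConti}, where the relevant function $F=d_Y(f(\cdot),g(\cdot))$ is only $2$-Lipschitz and the estimate accordingly loses a factor of $2$. Once the sharp Lipschitz constant and the resulting neighborhood inclusion are in place, the remainder is the routine Prohorov--Lipschitz argument already used twice in the paper.
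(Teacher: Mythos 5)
Your proposal is correct and follows essentially the same route as the paper: reduce by symmetry, apply the Prohorov inequality to the superlevel set, and use the containment $U_t(\{|d_X-d_Y|>\varepsilon+t\})\subset\{|d_X-d_Y|>\varepsilon\}$ coming from the $1$-Lipschitz property of $(x_1,y_1,x_2,y_2)\mapsto|d_X(x_1,x_2)-d_Y(y_1,y_2)|$ in the $l^1$-metric. The only differences are cosmetic: you make that Lipschitz estimate explicit (the paper uses the containment without comment), and you extract a near-optimal $\varepsilon$ from the infimum where the paper instead evaluates the superlevel set at $s$ itself via monotonicity; both yield the same bound.
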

\begin{proof}
By symmetry, it is sufficient to prove 
\begin{equation}\label{eq1:lem:disEur}
\dis_{\rm Eur}\mu\le \dis_{\rm Eur}\nu+\dP(\mu,\nu).
\end{equation}
Take any real number $s>\dis_{\rm Eur}\nu$ and $t>\dP(\mu,\nu)$.
Since $s>\dis_{\rm Eur}\nu$, we have $\nu(|d_X-d_Y|>s)<s$. Then we have
\begin{align*}
\mu(|d_X-d_Y|>s+t)&\le \nu(U_t(|d_X-d_Y|>s+t))+t\\
&\le \nu(|d_X-d_Y|>s)+t\\
&< s+t
\end{align*}
and this implies \eqref{eq1:lem:disEur}. This completes the proof.
\end{proof}

\begin{thm}\label{thm:EurOpt}
Let $X$ and $Y$ be two mm-spaces. Then we have
\[
d_{\rm Eur}(X,Y)=\min_{\pi\in\Pi(m_X,m_Y)}\min_{\varepsilon\ge 0}\max\{\varepsilon, \pi\otimes\pi(|d_X-d_Y|>\varepsilon)\}.
\]
\end{thm}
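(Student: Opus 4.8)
The statement asserts that both infima in the definition of $d_{\rm Eur}(X,Y)$ are attained. I would split this into the two existence claims that parallel the earlier sections: first, that the outer infimum over $\pi\in\Pi(m_X,m_Y)$ is a minimum, and second, that for the resulting measure $\mu=\pi\otimes\pi$ on $X\times Y\times X\times Y$ the inner infimum defining $\dis_{\rm Eur}\mu$ is a minimum. Writing $d_{\rm Eur}(X,Y)=\inf_{\pi}\dis_{\rm Eur}(\pi\otimes\pi)$, the theorem follows once both claims are established: pick an optimal $\pi$, then an optimal $\varepsilon$ for $\mu=\pi\otimes\pi$.

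\textbf{Existence of an optimal plan.} By Lemma \ref{lem:disEur} the functional $\mu\mapsto\dis_{\rm Eur}\mu$ is $1$-Lipschitz with respect to the Prohorov metric $\dP$, hence continuous for weak convergence of probability measures on $X\times Y\times X\times Y$. Next I would record that the squaring map $\pi\mapsto\pi\otimes\pi$ is continuous from the weak topology on $\Pi(m_X,m_Y)$ to the weak topology on probability measures on the fourfold product: if $\pi_n\to\pi$ weakly, then $\pi_n\otimes\pi_n\to\pi\otimes\pi$ weakly. Composing these two facts shows that $\pi\mapsto\dis_{\rm Eur}(\pi\otimes\pi)$ is continuous on $\Pi(m_X,m_Y)$. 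Since $\Pi(m_X,m_Y)$ is weakly compact (tightness of its elements together with Prohorov's theorem, exactly as used in Lemmas \ref{lem:disDeltaPiOpt}, \ref{lem:optBox1}, and \ref{lem:ExistsConcOpt}), a continuous function on it attains its infimum, yielding an optimal $\pi_*$.

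\textbf{Existence of an optimal $\varepsilon$.} Fix $\mu:=\pi_*\otimes\pi_*$ and set $h(\varepsilon):=\mu(|d_X-d_Y|>\varepsilon)$. I would observe that $h$ is non-increasing in $\varepsilon$, since the event $\{|d_X-d_Y|>\varepsilon\}$ shrinks as $\varepsilon$ grows, and that it is right-continuous because $\{|d_X-d_Y|>\varepsilon\}=\bigcup_{\delta>0}\{|d_X-d_Y|>\varepsilon+\delta\}$ is an increasing union as $\delta\downarrow 0$, so continuity of measure from below gives $h(\varepsilon+\delta)\uparrow h(\varepsilon)$. A non-increasing right-continuous function is lower semicontinuous, hence $\varepsilon\mapsto\max\{\varepsilon,h(\varepsilon)\}$ is lower semicontinuous. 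Any minimizing sequence $\{\varepsilon_n\}$ satisfies $\varepsilon_n\le\max\{\varepsilon_n,h(\varepsilon_n)\}\to\dis_{\rm Eur}\mu$, so it is bounded and may be taken in a compact interval $[0,M]$. A lower semicontinuous function on a compact interval attains its minimum, which is exactly $\dis_{\rm Eur}\mu$; this produces the optimal $\varepsilon_*$ and completes the proof.

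\textbf{Main obstacle.} The delicate point is the weak continuity of $\pi\mapsto\pi\otimes\pi$, which underlies the whole $\pi$-minimization. I would justify it either by testing against products $u(x_1,y_1)\,v(x_2,y_2)$ of bounded continuous functions, whose finite linear combinations are convergence-determining on the product of separable metric spaces, or by combining the tightness of $\{\pi_n\}$ with identification of weak subsequential limits via these product test functions. Once this is in place, everything else reduces to the already-proven Lipschitz estimate of Lemma \ref{lem:disEur} and an elementary semicontinuity argument in the single real variable $\varepsilon$, so I expect the scalar minimization to be routine and the continuity of squaring to be the only step requiring genuine care.
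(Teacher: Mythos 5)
Your proof is correct and takes essentially the same route as the paper: weak compactness of $\Pi(m_X,m_Y)$ combined with the Prohorov--Lipschitz estimate of Lemma \ref{lem:disEur} and the weak continuity of $\pi\mapsto\pi\otimes\pi$ for the outer minimum, then monotonicity and right-continuity of $\varepsilon\mapsto\pi\otimes\pi(|d_X-d_Y|>\varepsilon)$ for the inner one. You actually supply justifications (the product test-function argument for the continuity of squaring, and the lower-semicontinuity argument in $\varepsilon$) that the paper asserts without proof.
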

\begin{proof}
By the definition of $d_{\rm Eur}(X,Y)$, there exists $\{\pi_n\}\subset \Pi(m_X,m_Y)$ such that
$\dis_{\rm Eur}(\pi_n\otimes \pi_n)\to d_{\rm Eur}(X,Y)$. Since the set $\Pi(X,Y)$ is compact, we may assume that $\pi_n$ converges to some $\pi\in \Pi(X,Y)$. Since $\pi_n\otimes \pi_n$ converges to $\pi\otimes\pi$ as $n\to\infty$, we have $\dis_{\rm Eur}(\pi_n\otimes \pi_n)\to\dis_{\rm Eur}(\pi\otimes \pi)$
by Lemma \ref{lem:disEur}. This implies $d_{\rm Eur}(X,Y)=\dis_{\rm Eur}(\pi\otimes \pi)$.
Since the function $\varepsilon \mapsto \pi\otimes\pi(|d_X-d_Y|>\varepsilon)$ is non-increasing and right-continuous, there exists a real number $\varepsilon>0$ such that $\dis_{\rm Eur}(\pi\otimes \pi)=\max\{\varepsilon, \pi\otimes\pi(|d_X-d_Y|>\varepsilon)\}$. This completes the proof.
\end{proof}

\section{Applications}
As an application of Theorem \ref{thm:boxOpt}, we easily prove that the function $\square$ is a distance function.
In this section, we give the proof of Theorem \ref{thm:boxIsMetric}. We also give the proof of that the function $\dconc$ is a distance function (Theorems \ref{thm:obsTriangle} and \ref{thm:dconcNonDeg}). Note that it is already known that the functions $\square$ and $\dconc$ are both metrics by \cite{Gmv:green} (cf. \cite{Shioya:mmg}). The nondegeneracy of the Eurandom metric is also proved by using transport plan in this section.

\subsection{Nondegeneracy of box and observable metrics}
First, we prove the nondegeneracy of the box metric $\square$ by using optimal transport. Therefore, we prove Theorem \ref{thm:boxNondegenerate} below. 
For the proof of Theorem \ref{thm:boxNondegenerate}, we prepare the following Lemmas \ref{lem:boxNonDeg} and \ref{lem:disToEq}. The key lemma is Lemma \ref{lem:disToEq} and we only use Lemma \ref{lem:boxNonDeg} to prove Lemma \ref{lem:disToEq}.
\begin{lem}\label{lem:boxNonDeg}
Let $X$ and $Y$ be two mm-spaces and $\pi$ a transport plan between $m_X$ and $m_Y$. We assume that a map $f:\supp m_X\to \supp m_Y$ satisfies
\begin{equation}\label{lem:boxNonDeg:eq1}
\supp\pi\subset\{(x,f(x))\mid x\in\supp m_X \}.
\end{equation}
Then we have $f_*m_X=m_Y$.
\end{lem}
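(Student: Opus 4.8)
The plan is to show that $\pi$ is nothing but the coupling carried by the graph of $f$, and then to obtain $f_*m_X=m_Y$ by projecting onto the second factor. Concretely, it suffices to verify $m_Y(B)=m_X(f^{-1}(B))$ for every Borel set $B\subset Y$, which is precisely the assertion $f_*m_X=m_Y$. The first ingredient is that $\pi$ is carried by its support: since $X$ and $Y$ are separable metric spaces, $X\times Y$ is second countable, so $\pi((X\times Y)\setminus\supp\pi)=0$ and hence $\pi(\supp\pi)=1$. By hypothesis \eqref{lem:boxNonDeg:eq1}, every point of $\supp\pi$ then has the form $(x,f(x))$.

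The engine of the proof is a set identity on the support. For $(x,y)\in\supp\pi$ we have $y=f(x)$, so the conditions $y\in B$ and $x\in f^{-1}(B)$ are equivalent; consequently
\[
\pr_2^{-1}(B)\cap\supp\pi=\pr_1^{-1}(f^{-1}(B))\cap\supp\pi.
\]
Feeding this into $\pi(\supp\pi)=1$ together with $(\pr_1)_*\pi=m_X$ and $(\pr_2)_*\pi=m_Y$, I would compute
\[
m_Y(B)=\pi(\pr_2^{-1}(B))=\pi(\pr_1^{-1}(f^{-1}(B)))=m_X(f^{-1}(B)),
\]
which is the equality sought. Thus the measure-theoretic core is a one-line translation between the two projections along the graph.

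The step that requires genuine care, and which I expect to be the main obstacle, is the measurability of $f$: without it neither $f^{-1}(B)$ nor the pushforward $f_*m_X$ is well defined, and the final displayed equalities are unjustified. To settle this I would use that $\pr_1$ restricted to $\supp\pi$ is injective, since $(x,y),(x,y')\in\supp\pi$ force $y=f(x)=y'$ by \eqref{lem:boxNonDeg:eq1}. As $\supp\pi$ is a closed, hence Polish, subset of $X\times Y$, the continuous injection $\pr_1|_{\supp\pi}$ has Borel image $\pr_1(\supp\pi)$ and a Borel inverse by the Lusin--Souslin theorem; therefore $f=\pr_2\circ(\pr_1|_{\supp\pi})^{-1}$ is Borel on $\pr_1(\supp\pi)$. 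Finally $m_X(\pr_1(\supp\pi))=\pi(\pr_1^{-1}(\pr_1(\supp\pi)))\ge\pi(\supp\pi)=1$ shows that $\pr_1(\supp\pi)$ carries all of $m_X$, so $f^{-1}(B)$ is $m_X$-measurable and the computation above is valid, completing the proof. (In the intended application $f$ arises as an isometry from a set $S$ with $\dis S=0$, so it is continuous and this subtlety disappears; I would nonetheless include the argument above to cover the general statement.)
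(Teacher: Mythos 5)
Your proof is correct, and its measure-theoretic core is the same as the paper's: both arguments rest on the fact that on $\supp\pi$ the conditions $y\in B$ and $x\in f^{-1}(B)$ are interchangeable, combined with $\pi(\supp\pi)=1$ and the two marginal conditions. The cosmetic difference is in the routing: the paper proves the rectangle identity $(A\times B)\cap\supp\pi=((A\cap f^{-1}(B))\times Y)\cap\supp\pi$, deduces the intermediate statement $\pi=(\id_X,f)_*m_X$, and only then projects to get $m_Y=(\pr_2)_*\pi=f_*m_X$; you effectively take $A=X$ and compute $m_Y(B)=m_X(f^{-1}(B))$ in one step, trading away the (mildly stronger) conclusion that $\pi$ is exactly the graph coupling. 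The substantive difference is your measurability argument. The paper's proof silently treats $f^{-1}(B)$ as measurable, which is harmless in the intended application (in Lemma \ref{lem:disToEq} the map $f$ is an isometry, hence Borel) but is not automatic for the literal statement, where $f$ is an arbitrary map constrained by the hypothesis only on $\pr_1(\supp\pi)$. Your use of injectivity of $\pr_1|_{\supp\pi}$, the Lusin--Souslin theorem to get Borelness of $\pr_1(\supp\pi)$ and of $f$ restricted to it, and the observation $m_X(\pr_1(\supp\pi))\ge\pi(\supp\pi)=1$ closes this gap, at the price of interpreting $f_*m_X$ via the $m_X$-completion since $f$ is uncontrolled off the full-measure set $\pr_1(\supp\pi)$. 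So your write-up is a valid proof of a slightly more careful reading of the lemma than the one the paper actually gives.
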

\begin{proof}
Take any Borel set $A\subset X$ and $B\subset Y$.
By \eqref{lem:boxNonDeg:eq1}, we have
\[
(A\times B)\cap\supp\pi=((A\cap f^{-1}(B))\times Y)\cap \supp\pi.
\]
Since we have
\begin{align*}
\pi(A\times B)&=\pi((A\cap f^{-1}(B))\times Y)\\
&=m_X(A\cap f^{-1}(B))\\
&=(\id_X,f)_*m_X(A\times B),
\end{align*}
we obtain $\pi=(\id_X,f)_*m_X$.
Then we have $m_Y=(\pr_2)_*\pi=f_*m_X$.
This completes the proof.
\end{proof}

\begin{lem}\label{lem:disToEq}
Let $X$ and $Y$ be two mm-spaces. If there exists a transport plan $\pi\in\Pi(m_X,m_Y)$ such that
$\dis\supp\pi=0$,
then $X$ and $Y$ are mm-isomorphic to each other.
\end{lem}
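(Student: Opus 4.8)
The plan is to extract an mm-isomorphism directly from the support of the optimal transport plan $\pi$. The hypothesis $\dis\supp\pi=0$ says that for any two pairs $(x,y),(x',y')\in\supp\pi$ we have $d_X(x,x')=d_Y(y,y')$. The first thing I would do is exploit this to show that $\supp\pi$ is the graph of an isometry. Concretely, suppose $(x,y)$ and $(x,y')$ both lie in $\supp\pi$; applying the distortion-zero condition to these two pairs gives $d_Y(y,y')=d_X(x,x)=0$, hence $y=y'$. Thus $\supp\pi$ defines a well-defined map, and symmetrically the projection in the other variable is injective; so $\supp\pi$ is the graph of a bijection $f$ between its two projections, and the condition $\dis\supp\pi=0$ makes $f$ a (surjective) isometry.

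Next I would identify the two projections with the supports of the marginals. Since $\pr_1$ and $\pr_2$ are continuous and $\pi$ is a transport plan between $m_X$ and $m_Y$, I expect $\pr_1(\supp\pi)$ to be dense in $\supp m_X$ and $\pr_2(\supp\pi)$ dense in $\supp m_Y$ (this is the natural companion to the earlier Proposition on supports of push-forwards). The isometry $f\colon\pr_1(\supp\pi)\to\pr_2(\supp\pi)$ is uniformly (indeed $1$-Lipschitz both ways) continuous, so it extends uniquely to an isometry $\bar f\colon\supp m_X\to\supp m_Y$ between the closures; completeness of the metric spaces guarantees the extension lands in and covers the closed sets $\supp m_X$ and $\supp m_Y$. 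This upgrades the graph relation \eqref{lem:boxNonDeg:eq1} to a genuine isometry $\bar f$ defined on all of $\supp m_X$, with
\[
\supp\pi\subset\{(x,\bar f(x))\mid x\in\supp m_X\}.
\]

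With $\bar f$ in hand, I would invoke Lemma \ref{lem:boxNonDeg} directly: its hypothesis is exactly the display above, and its conclusion gives $\bar f_*m_X=m_Y$. Together with the fact that $\bar f$ is an isometry onto $\supp m_Y$, this is precisely the definition of an mm-isomorphism, so $X$ and $Y$ are mm-isomorphic.

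**The main obstacle** I anticipate is the extension/density step rather than the algebra of the isometry. Showing that $\supp\pi$ is a graph and that $f$ is an isometry is a short pointwise argument from $\dis\supp\pi=0$. The care is needed in verifying that the projections of $\supp\pi$ are dense in the respective supports of the marginals (so that the extension $\bar f$ is defined on \emph{all} of $\supp m_X$ and is \emph{onto} $\supp m_Y$), and in confirming that the uniformly continuous extension of an isometry across a completion remains a surjective isometry. Once the domain is correctly pinned down as $\supp m_X$, the appeal to Lemma \ref{lem:boxNonDeg} is immediate and furnishes the measure-preservation for free.
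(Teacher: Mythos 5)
Your proposal is correct and follows essentially the same route as the paper: the paper also shows $\supp\pi$ is the graph of a distance-preserving map via the $\dis\supp\pi=0$ condition, uses $\supp m_X=\supp(\pr_1)_*\pi=\overline{\pr_1(\supp\pi)}$ together with a Cauchy-sequence (completeness) argument to get the map defined on all of $\supp m_X$, and then invokes Lemma \ref{lem:boxNonDeg} for $f_*m_X=m_Y$. Your ``extend by uniform continuity from the dense projection'' step is just a repackaging of the paper's existence argument, so the two proofs coincide in substance.
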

\begin{proof}
We define a map $f:\supp m_X\to \supp m_Y$ by
$(x,f(x))\in\supp\pi$ for any $x\in \supp m_X$. Let us prove the well-definedness of $f$. First we prove the uniqueness. Take any $x\in\supp m_X$ and any $y, y'\in \supp m_Y$ with $(x,y), (x,y')\in\supp\pi$. Since
\[
d_Y(y,y')=|d_X(x,x)-d_Y(y,y')|\le \dis\supp\pi=0,
\]
we obtain $y=y'$. Next we prove the existence.
Take any $x\in \supp m_X$.
Since
\[
\supp m_X=\supp (\pr_1)_*\pi=\overline{\pr_1(\supp\pi)},
\]
there exists a sequence $\{(x_n,y_n)\}_n\subset \supp\pi$ such that $x_n\to x$ as $n\to\infty$.
Since we have
\[
|d_Y(y_m,y_n)-d_X(x_m,x_n)|\le \dis\supp\pi=0,
\]
we have $d_Y(y_m,y_n)=d_X(x_m,x_n)\to 0$ as $m,n\to\infty$.
Therefore $\{y_n\}_n$ is a Cauchy sequence and there exists $y\in Y$ such that $y_n\to y$ as $n\to\infty$. Since $\supp\pi$ is closed, we have $(x,y)\in \supp\pi$ and we have
\[
y\in \pr_2(\supp\pi)\subset \supp(\pr_2)_*\pi=\supp m_Y.
\]
Therefore we obtain the well-definedness of $f$. Let us prove that $f$ is an mm-isomorphism. For any $x, x'\in\supp m_X$, we have
\[
|d_Y(f(x),f(x'))-d_X(x,x')|\le \dis\supp\pi=0.
\]
By the definition of $f$, we have
$\supp\pi=\{(x,f(x))\mid x\in\supp m_X \}$.
This implies
$m_Y=f_*m_X$ by Lemma \ref{lem:boxNonDeg}.
\end{proof}

\begin{thm}\label{thm:boxNondegenerate}
Let $X$ and $Y$ be two mm-spaces. If $\square(X,Y)=0$, then $X$ and $Y$ are mm-isomorphic to each other.
\end{thm}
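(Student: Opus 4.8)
The plan is to reduce the nondegeneracy to Lemma \ref{lem:disToEq} by extracting, from the hypothesis $\square(X,Y)=0$, a transport plan whose support has distortion $0$. The main point is that Theorem \ref{thm:boxOpt} delivers an honest \emph{minimum}, not merely an infimum, so the optimal value $0$ is actually attained by some pair $(\pi,S)$.

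First I would invoke Theorem \ref{thm:boxOpt} (equivalently, Lemmas \ref{lem:optBox1} and \ref{lem:boxOptimalS}): since $\square(X,Y)=0$, the minimum on the right-hand side equals $0$, so there exist a transport plan $\pi\in\Pi(m_X,m_Y)$ and a closed set $S\subset X\times Y$ with $\max\{1-\pi(S),\dis S\}=0$. Reading off the two coordinates of the maximum, this forces $\pi(S)=1$ and $\dis S=0$ simultaneously.

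Next I would argue that $\pi(S)=1$ together with the closedness of $S$ forces $\supp\pi\subset S$. Indeed, any point $p\notin S$ has, by closedness of $S$, an open neighborhood $U$ disjoint from $S$; then $\pi(U)\le\pi((X\times Y)\setminus S)=0$, so $p\notin\supp\pi$. Since the distortion is monotone under inclusion, $\dis\supp\pi\le\dis S=0$, whence $\dis\supp\pi=0$. Finally, Lemma \ref{lem:disToEq} applies directly and yields that $X$ and $Y$ are mm-isomorphic, completing the proof.

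The step that carries all the weight is the existence of the exact minimizing pair $(\pi,S)$, which is precisely what the weak-compactness and lower-semicontinuity arguments of Section \ref{sec:boxOpt} were built to supply; everything after that is the short algebraic observation above. Had one only an infimizing sequence $(\pi_n,S_n)$ with distortions tending to $0$, one would be forced to pass to a weak/weak-Hausdorff limit before reaching a plan of distortion exactly $0$, reintroducing exactly those compactness arguments. This is the conceptual simplification over the distance-matrix-distribution approach of \cite{Gmv:green}.
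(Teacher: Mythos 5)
Your proposal is correct and follows exactly the paper's own argument: apply Theorem \ref{thm:boxOpt} to obtain an optimal pair $(\pi,S)$ with $\pi(S)=1$ and $\dis S=0$, deduce $\supp\pi\subset S$ so that $\dis\supp\pi=0$, and conclude via Lemma \ref{lem:disToEq}. The only difference is that you spell out the elementary step $\pi(S)=1\Rightarrow\supp\pi\subset S$, which the paper asserts without proof.
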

\begin{proof}
We assume that $\square(X,Y)=0$.
By theorem \ref{thm:boxOpt}, there exist a transport plan $\pi\in\Pi(X,Y)$ and a closed subset $S\subset X\times Y$ such that $1-\pi(S)=0$ and $\dis S=0$. Since $1-\pi(S)=0$, we have $\supp \pi\subset S$. This implies
\[
\dis \supp\pi\le \dis S=0.
\]
This implies that $X$ and $Y$ are mm-isomorphic by Lemma \ref{lem:disToEq}.
\end{proof}

Next, we prove the nondegeneracy of the function $\dconc$ by using optimal transport and Lemma \ref{lem:disToEq}.
Let us prepare Lemma \ref{lem0:obsNonDeg} for the proof of Theorem \ref{thm:dconcNonDeg} below.

\begin{lem}\label{lem0:obsNonDeg}
Let $X$ and $Y$ be two mm-spaces and let $\pr_1\colon X\times Y\to X$ be the projection map. Then the set 
$(\pr_1)^*\Lip_1(X)$ is closed with respect to $\dkf\pi$ for  any $\pi\in\Pi(m_X,m_Y)$.
\end{lem}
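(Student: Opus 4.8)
The plan is to push the whole question down to the first factor $X$, and there invoke the standard fact that on a probability space convergence in the Ky Fan metric (i.e.\ convergence in measure) admits an almost-everywhere convergent subsequence, whose limit automatically inherits the Lipschitz bound. The first step is to record the marginal identity: for any $f,g\in\Lip_1(X)$, since $(\pr_1)_*\pi=m_X$ we have
\[
\pi(\{\,|\pr_1^*f-\pr_1^*g|>\ep\,\})=\pi(\pr_1^{-1}(\{\,|f-g|>\ep\,\}))=m_X(\{\,|f-g|>\ep\,\}),
\]
so that $\dkf\pi(\pr_1^*f,\pr_1^*g)=\dkf{m_X}(f,g)$. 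Consequently, to prove that $\pr_1^*\Lip_1(X)$ is $\dkf\pi$-closed it suffices to show the following: whenever $f_n\in\Lip_1(X)$ and $\pr_1^*f_n$ converges in $\dkf\pi$ to some measurable $H$ on $X\times Y$, there is an $f\in\Lip_1(X)$ with $H=\pr_1^*f$ holding $\pi$-almost everywhere.

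Next I would extract an almost-everywhere convergent subsequence. Since $\dkf\pi$-convergence is convergence in $\pi$-measure, a Borel--Cantelli argument yields a subsequence with $\pr_1^*f_{n_k}\to H$ $\pi$-a.e. As $\pr_1^*f_{n_k}(x,y)=f_{n_k}(x)$ does not depend on $y$, the Borel set $C:=\{x\in X\mid (f_{n_k}(x))_k \text{ is Cauchy}\}$ satisfies $\pr_1^{-1}(C)\supset\{(x,y)\mid \pr_1^*f_{n_k}(x,y)\text{ converges}\}$, so $\pi(\pr_1^{-1}(C))=1$ and hence $m_X(C)=1$ by the marginal identity. Setting $f^*(x):=\lim_k f_{n_k}(x)$ for $x\in C$ and combining the two almost-everywhere statements gives $H(x,y)=f^*(x)$ for $\pi$-a.e.\ $(x,y)$, i.e.\ $H=\pr_1^*f^*$ $\pi$-almost everywhere.

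Finally I would upgrade $f^*$ to a $1$-Lipschitz function on all of $X$. For $x,x'\in C$ the inequality $|f_{n_k}(x)-f_{n_k}(x')|\le d_X(x,x')$ passes to the limit, so $f^*$ is $1$-Lipschitz on $C$. Restricting to the set $C\cap\supp m_X$, which is nonempty since it has $m_X$-measure $1$, and applying the McShane extension formula
\[
f(x):=\inf_{a\in C\cap\supp m_X}\bigl(f^*(a)+d_X(x,a)\bigr),
\]
produces $f\in\Lip_1(X)$ with $f=f^*$ on $C\cap\supp m_X$. As this set has full $m_X$-measure, $\pr_1^*f=\pr_1^*f^*=H$ holds $\pi$-a.e., which establishes the closedness of $\pr_1^*\Lip_1(X)$.

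The main obstacle is precisely this last upgrade. Because $\dkf\pi$-convergence is only convergence in probability, the limit $H$ is a priori merely a measurable function, and the function $f^*$ representing its ``$X$-part'' is defined and $1$-Lipschitz only on the full-measure set $C$; the care lies in producing a genuine, everywhere-defined $1$-Lipschitz representative via the McShane formula and in verifying that this extension still agrees with $H$ $\pi$-almost everywhere. Everything before that step is a routine reduction through the marginal identity and a subsequence extraction.
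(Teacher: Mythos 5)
Your proof is correct and takes essentially the same route as the paper's: extract a $\pi$-a.e.\ convergent subsequence, observe that the limit depends only on the $X$-coordinate so that it induces a $1$-Lipschitz function on a set of full measure, and then extend to a genuine $1$-Lipschitz function. The only difference is the extension device --- the paper extends continuously to $\supp m_X\subset\overline{\pr_1(S)}$, while you use the McShane formula to extend to all of $X$, which is if anything slightly tidier since elements of $\Lip_1(X)$ must be defined on all of $X$.
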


\begin{proof}
Take any sequence $(\varphi_n)_{n\in\N}\in (\Lip_1(X))^\N$ and any measurable function $F:X\times Y\to\R$ satisfying $\dkf\pi((\pr_1)^*\varphi_n,F)\to 0$ as $n\to\infty$.
Then there exists an increasing function $\iota:\N\to\N$ such that
\[
(\pr_1)^*\varphi_{\iota(n)}\to F \text{ \quad $\pi$- a.e. on $X$.}
\]
This implies that there exists a Borel set $S\subset X\times Y$ with $\pi(S)=1$ such that
$(\pr_1)^*\varphi_{\iota(n)}\to F $ on $S$.
Now we have
\begin{equation}\label{lem0:obsNonDeg:eq1}
F(x,y)=F(x,y')
\end{equation}
for any $x\in X$ and $y,y'\in Y$ with $(x,y),(x,y')\in S$.
We define a function $f:\pr_1(X)\to\R$ as
\[
f(x):=F(x,y) \text{\quad for $(x,y)\in S$}.
\]
By \eqref{lem0:obsNonDeg:eq1}, $f$ is well-defined. The function $f$ is $1$-Lipschitz because $\varphi_n$ is $1$-Lipschitz for any $n\in\N$. The function $f$ is extended continuously such that the domain of $f$ is $\supp m_X$ because $\supp m_X\subset \overline{\pr_1(S)}$. Since we have $F=(\pr_1)^*f$ on $S$, we obtain $F\in (\pr_1)^*\Lip_1(X)$. This completes the proof.
\end{proof}

\begin{thm}\label{thm:dconcNonDeg}
Let $X$ and $Y$ be two mm-spaces. If $\dconc(X,Y)=0$, then $X$ and $Y$ are mm-isomorphic to each other.
\end{thm}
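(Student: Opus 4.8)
The plan is to exploit the optimal-transport representation of $\dconc$ established in Theorem \ref{thm:optConc}, just as Theorem \ref{thm:boxNondegenerate} exploited Theorem \ref{thm:boxOpt}. Assuming $\dconc(X,Y)=0$, Lemma \ref{lem:ExistsConcOpt} yields an optimal transport plan $\pi\in\Pi(m_X,m_Y)$ with $\dconcpi{\pi}(X,Y)=0$; that is, the two sets $\pr_1^*\Lip_1(X)$ and $\pr_2^*\Lip_1(Y)$ have Hausdorff distance zero in the Ky Fan metric $\dkf{\pi}$. Since by Lemma \ref{lem0:obsNonDeg} the set $\pr_1^*\Lip_1(X)$ is closed with respect to $\dkf{\pi}$ (and symmetrically $\pr_2^*\Lip_1(Y)$ is closed), Hausdorff distance zero forces the \emph{equality}
\[
\pr_1^*\Lip_1(X)=\pr_2^*\Lip_1(Y)
\]
as subsets of the $\dkf{\pi}$-space of measurable functions on $X\times Y$. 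This set equality is the central mechanism.

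My intended strategy is then to recover the hypothesis $\dis\supp\pi=0$ of Lemma \ref{lem:disToEq}, after which the conclusion is immediate. First I would fix a point $(x,y)\in\supp\pi$ and a second point $(x',y')\in\supp\pi$ and attempt to show $d_X(x,x')=d_Y(y,y')$. To compare the two metrics I would feed distance functions into the equality above: for a fixed base point $x_0$, the function $x\mapsto d_X(x,x_0)$ lies in $\Lip_1(X)$, so $\pr_1^*d_X(\cdot,x_0)\in\pr_1^*\Lip_1(X)=\pr_2^*\Lip_1(Y)$, meaning there is some $g\in\Lip_1(Y)$ with $\pr_1^*d_X(\cdot,x_0)=\pr_2^*g$ in $\dkf{\pi}$, i.e.\ $d_X(x,x_0)=g(y)$ for $\pi$-a.e.\ $(x,y)$. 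Running this over a countable separating family of $1$-Lipschitz functions on $X$ (available since $X$ is separable) and intersecting the full-measure sets, I would obtain a $\pi$-full set on which every $1$-Lipschitz function of the $X$-coordinate equals a $1$-Lipschitz function of the $Y$-coordinate, and symmetrically. Because such distance functions separate points and $\supp\pi$ is the closure of any $\pi$-full set intersected appropriately, this should propagate to all of $\supp\pi$ and force the metric-matching identity
\[
\bigl|d_X(x,x')-d_Y(y,y')\bigr|=0
\qquad\text{for all }(x,y),(x',y')\in\supp\pi,
\]
which is exactly $\dis\supp\pi=0$.

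With $\dis\supp\pi=0$ in hand, Lemma \ref{lem:disToEq} immediately concludes that $X$ and $Y$ are mm-isomorphic, completing the proof.

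The hard part will be the passage from the functional equality $\pr_1^*\Lip_1(X)=\pr_2^*\Lip_1(Y)$ back to the pointwise geometric statement $\dis\supp\pi=0$. The subtlety is that $\dkf{\pi}$-equality is only an a.e.\ statement, whereas $\dis\supp\pi=0$ is a statement about \emph{every} pair of points in the topological support. Bridging this gap requires care: I would use the separability of $X$ to reduce to a \emph{countable} family of Lipschitz functions (so that countably many $\pi$-null exceptional sets can be discarded simultaneously), then argue that the resulting matching, valid on a $\pi$-full set, extends by continuity to the closed support $\supp\pi$. I expect this continuity/density argument—ensuring that the representative functions $g\in\Lip_1(Y)$ can be chosen so that the identity $d_X(x,x_0)=g(y)$ holds genuinely on $\supp\pi$ rather than merely almost everywhere—to be the main obstacle, and it is precisely where Lemma \ref{lem0:obsNonDeg}'s closedness and the structure of $\supp\pi$ must be combined.
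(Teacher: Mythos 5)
Your proposal is correct and takes essentially the same route as the paper: an optimal plan $\pi$ from Theorem \ref{thm:optConc}, the equality $(\pr_1)^*\Lip_1(X)=(\pr_2)^*\Lip_1(Y)$ from the closedness in Lemma \ref{lem0:obsNonDeg}, recovery of $\dis\supp\pi=0$ by feeding distance functions through this equality, and then Lemma \ref{lem:disToEq} to conclude. The only difference is that the paper skips your countable-family/null-set bookkeeping: for each pair $(x,y),(x',y')\in\supp\pi$ it matches the single function $d_Y(y,\cdot)$ with some $\varphi\in\Lip_1(X)$, and since both pullbacks are continuous and agree $\pi$-a.e.\ they agree on all of $\supp\pi$, giving $d_Y(y,y')=\varphi(x')-\varphi(x)\le d_X(x,x')$ directly.
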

\begin{proof}
We assume that $\dconc(X,Y)=0$. By Theorem \ref{thm:optConc}, there exists $\pi\in\Pi(m_X,m_Y)$ such that $\dconcpi\pi(X,Y)=0$.
By Lemma \ref{lem0:obsNonDeg}, we have
\begin{equation}\label{lem1:obsNonDeg:eq1}
(\pr_1)^*\Lip_1(X)=(\pr_2)^*\Lip_1(Y).
\end{equation}
Take any two points $(x,y), (x',y')\in\supp\pi$.
Let us prove
\begin{equation}\label{lem1:obsNonDeg:eq2}
d_Y(y,y')\le d_X(x,x').
\end{equation}
By $d_Y(y,\cdot)\in \Lip_1(Y)$ and \eqref{lem1:obsNonDeg:eq1}, there exists $\varphi\in\Lip_1(X)$ such that
\[
(\pr_1)^*\varphi=(\pr_2)^*d_Y(y,\cdot) \text{ \quad $\pi$-a.e. on $X$}
\]
Since $(\pr_1)^*\varphi$ and $(\pr_2)^*d_Y(y,\cdot)$ are continuous, we have
\begin{equation}\label{lem1:obsNonDeg:eq3}
(\pr_1)^*\varphi=(\pr_2)^*d_Y(y,\cdot) \text{ \quad on $\supp\pi$.}
\end{equation}
By $(x,y),(x',y')\in\supp\pi$ and \eqref{lem1:obsNonDeg:eq3}, we have
\begin{align*}
d_Y(y,y')&=d_Y(y,y')-d_Y(y,y)\\
&=((\pr_2)^*d_Y(y,\cdot))(x',y')-((\pr_2)^*d_Y(y,\cdot))(x,y)\\
&=((\pr_1)^*\varphi)(x',y')-((\pr_2)^*\varphi)(x,y)\\
&=\varphi(x')-\varphi(x).
\end{align*}
This implies \eqref{lem1:obsNonDeg:eq2} because $\varphi$ is $1$-Lipschitz.
Similarly we have
\[
d_X(x,x')\le d_Y(y,y').
\]
Then we obtain $\dis\supp\pi=0$. This completes the proof by Lemma \ref{lem:disToEq}.
\end{proof}

The nondegeneracy of the Eurandom metric $d_{\rm Eur}$ is also proved by Lemma \ref{lem:disToEq}.
\begin{thm}
Let $X$ and $Y$ be two mm-spaces. If $d_{\rm Eur}(X,Y)=0$, then $X$ and $Y$ are mm-isomorphic to each other.
\end{thm}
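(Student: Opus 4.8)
The plan is to reduce the statement to Lemma~\ref{lem:disToEq} by producing a transport plan $\pi$ with $\dis\supp\pi=0$, just as in the proof of Theorem~\ref{thm:boxNondegenerate}. First I would invoke Theorem~\ref{thm:EurOpt}: since $d_{\rm Eur}(X,Y)=0$, there exist a transport plan $\pi\in\Pi(m_X,m_Y)$ and a real number $\varepsilon\ge 0$ with
\[
\max\{\varepsilon,\ \pi\otimes\pi(|d_X-d_Y|>\varepsilon)\}=0.
\]
This forces $\varepsilon=0$ together with $\pi\otimes\pi(|d_X-d_Y|>0)=0$; that is, the set
\[
U:=\{(x,y,x',y')\mid |d_X(x,x')-d_Y(y,y')|>0\}
\]
is $\pi\otimes\pi$-null. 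Note that $U$ is open because $d_X$ and $d_Y$ are continuous.

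The heart of the argument is to upgrade this almost-everywhere equality to an everywhere equality on the support, that is, to establish $\dis\supp\pi=0$. I would argue by contradiction: suppose there are points $(x,y),(x',y')\in\supp\pi$ with $\delta:=|d_X(x,x')-d_Y(y,y')|>0$. By continuity of the map $(x,y,x',y')\mapsto|d_X(x,x')-d_Y(y,y')|$, there are open neighborhoods $V\ni(x,y)$ and $V'\ni(x',y')$ in $X\times Y$ with $V\times V'\subset U$. Since $(x,y),(x',y')\in\supp\pi$, we have $\pi(V)>0$ and $\pi(V')>0$, whence
\[
\pi\otimes\pi(U)\ge\pi\otimes\pi(V\times V')=\pi(V)\,\pi(V')>0,
\]
contradicting that $U$ is $\pi\otimes\pi$-null. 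Hence no such pair exists, and so $\dis\supp\pi=0$. Finally I would apply Lemma~\ref{lem:disToEq} directly: the existence of a transport plan $\pi\in\Pi(m_X,m_Y)$ with $\dis\supp\pi=0$ gives that $X$ and $Y$ are mm-isomorphic.

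I expect the main obstacle to be precisely the passage from the $\pi\otimes\pi$-null condition on the open set $U$ to the pointwise statement $\dis\supp\pi=0$. The clean way to handle it is the box estimate above, which amounts to the observation that $\supp\pi\times\supp\pi\subset\supp(\pi\otimes\pi)$, so that an open set meeting $\supp\pi\times\supp\pi$ cannot be $\pi\otimes\pi$-null. Everything else is bookkeeping: extracting $\varepsilon=0$ from Theorem~\ref{thm:EurOpt} and citing Lemma~\ref{lem:disToEq}.
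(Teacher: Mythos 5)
Your proposal is correct and follows essentially the same route as the paper: both extract an optimal $\pi$ with $\pi\otimes\pi(|d_X-d_Y|>0)=0$ from Theorem~\ref{thm:EurOpt}, deduce $\dis\supp\pi=0$, and conclude via Lemma~\ref{lem:disToEq}. Your neighborhood-based contradiction argument is just an elementary unpacking of the paper's one-line observation that $\supp\pi\times\supp\pi=\supp(\pi\otimes\pi)$ is contained in the closed set $\{|d_X-d_Y|=0\}$.
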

\begin{proof}
We assume that $d_{\rm Eur}(X,Y)=0$. By Theorem \ref{thm:EurOpt}, there exists $\pi\in\Pi(m_X,m_Y)$ such that
$\pi\otimes \pi(|d_X-d_Y|=0)=1$.
This implies
\[
\supp\pi\times \supp\pi=\supp(\pi\otimes \pi)\subset \{|d_X-d_Y|=0\}.
\]
This completes the proof by Lemma \ref{lem:disToEq}.
\end{proof}

\subsection{Triangle inequalities of box and observable metrics}
The triangle inequality of the box metric $\square$ is proved by using the gluing lemma of measures.
Remark that we use Proposition \ref{prop:transportBox} in the proof of Theorem \ref{thm:boxTriangle} but we do not need to use Theorem \ref{thm:boxOpt}. The triangle inequality of the function $\dconc$ (Theorem \ref{thm:obsTriangle}) is also proved by using the gluing lemma. The triangle inequality of the Eurandom metric $d_{\rm Eur}$ is also proved by using the gluing lemma easily, so it is omitted.
\begin{thm}[\cite{Gmv:green}, cf.~Theorem 4.10 in \cite{Shioya:mmg}]\label{thm:boxTriangle}
The function $\square$ satisfies the triangle inequality.
\end{thm}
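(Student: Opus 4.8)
The plan is to work through the characterization $\square(X,Y)=\inf_\pi\dis\pi$ of Proposition \ref{prop:transportBox} and to manufacture a coupling between $X$ and $Z$ out of near-optimal couplings for the pairs $(X,Y)$ and $(Y,Z)$ by the gluing lemma. Fix a third mm-space $Z$ and take arbitrary $\varepsilon_1>\square(X,Y)$ and $\varepsilon_2>\square(Y,Z)$. By Proposition \ref{prop:transportBox} I would first select $\pi_{12}\in\Pi(m_X,m_Y)$ and $\pi_{23}\in\Pi(m_Y,m_Z)$ with $\dis\pi_{12}<\varepsilon_1$ and $\dis\pi_{23}<\varepsilon_2$, and then, by the definition of $\dis\pi$, closed sets $S_{12}\subset X\times Y$ and $S_{23}\subset Y\times Z$ with $\dis S_{12}<\varepsilon_1$, $\dis S_{23}<\varepsilon_2$, $\pi_{12}(S_{12})>1-\varepsilon_1$, and $\pi_{23}(S_{23})>1-\varepsilon_2$.

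The central step is the gluing. The gluing lemma yields a Borel probability measure $\mu$ on $X\times Y\times Z$ whose $(X,Y)$-marginal is $\pi_{12}$ and whose $(Y,Z)$-marginal is $\pi_{23}$; I then set $\pi_{13}:=(\pr_{13})_*\mu\in\Pi(m_X,m_Z)$. Consider the closed set
\[
T:=\set{(x,y,z)\in X\times Y\times Z\mid (x,y)\in S_{12},\ (y,z)\in S_{23}}.
\]
Writing $A:=(\pr_{12})^{-1}(S_{12})$ and $B:=(\pr_{23})^{-1}(S_{23})$, the marginal conditions give $\mu(A)=\pi_{12}(S_{12})$ and $\mu(B)=\pi_{23}(S_{23})$, so the elementary bound $\mu(A\cap B)\ge\mu(A)+\mu(B)-1$ produces $\mu(T)>1-\varepsilon_1-\varepsilon_2$.

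To finish, I would put $S_{13}:=\overline{\pr_{13}(T)}$. Since $T\subset\pr_{13}^{-1}(S_{13})$, we get $\pi_{13}(S_{13})\ge\mu(T)>1-\varepsilon_1-\varepsilon_2$. For the distortion, any two points of $\pr_{13}(T)$ lift to $(x,y,z),(x',y',z')\in T$, and interposing the $Y$-coordinate gives
\[
|d_X(x,x')-d_Z(z,z')|\le|d_X(x,x')-d_Y(y,y')|+|d_Y(y,y')-d_Z(z,z')|\le\dis S_{12}+\dis S_{23}<\varepsilon_1+\varepsilon_2;
\]
since $d_X$ and $d_Z$ are continuous, the supremum defining distortion is unchanged under closure, so $\dis S_{13}\le\varepsilon_1+\varepsilon_2$. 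Hence $\square(X,Z)\le\dis\pi_{13}\le\max\{\dis S_{13},1-\pi_{13}(S_{13})\}\le\varepsilon_1+\varepsilon_2$, and letting $\varepsilon_1\downarrow\square(X,Y)$ and $\varepsilon_2\downarrow\square(Y,Z)$ yields the triangle inequality. I expect the only genuine obstacle to be the gluing step itself, namely invoking the gluing lemma to build the three-fold coupling $\mu$ on the complete separable spaces and correctly bookkeeping that the two ``good sets'' $S_{12}$ and $S_{23}$ combine through the common $Y$-coordinate; the union bound on $\mu(T)$ and the closure argument for $\dis S_{13}$ are then routine.
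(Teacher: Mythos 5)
Your proposal is correct and follows essentially the same route as the paper's proof: both invoke Proposition \ref{prop:transportBox} to pick near-optimal couplings and closed sets, glue them with the gluing lemma, and then take the closure of the projected composition (your $\pr_{13}(T)$ is exactly the paper's $T\circ S:=\pr_{13}((S\times Z)\cap(X\times T))$), bounding its distortion by the sum of the two distortions and its measure by the union bound $\mu(A\cap B)\ge\mu(A)+\mu(B)-1$. The only cosmetic difference is that you explicitly justify $\dis\overline{\pr_{13}(T)}=\dis\pr_{13}(T)$ via continuity of the metrics, a step the paper states without comment.
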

\begin{proof}
Let $X$, $Y$ and $Z$ be three mm-spaces. Let us prove
\begin{equation}\label{thm:boxTriangle:eq1}
\square(X,Z)\le \square(X,Y)+\square(Y,Z).
\end{equation}
Take any real number $\alpha>\square(X,Y)$ and $\beta>\square(Y,Z)$. 
By Proposition \ref{prop:transportBox}, there exist $\pi_{XY}\in \Pi(X,Y)$ and $S\subset X\times Y$ such that
\[
\max\{\dis S,1-\pi_{XY}(S)\}<\alpha.
\]
Similarly, there exist $\pi_{YZ}\in \Pi(Y,Z)$ and $T\subset Y\times Z$ such that
\[
\max\{\dis T,1-\pi_{YZ}(T)\}<\beta.
\]
Now we put
\[
T\circ S:=\pr_{13}((S\times Z)\cap (X\times T)),
\]
where $\pr_{ij}:=(\pr_i,\pr_j)$ is the pair of projections for any $i,j=1,2,3$.
Then we have
\begin{equation}\label{thm:boxTriangle:eq2}
\dis (\overline{T\circ S})=\dis (T\circ S)\le \dis T+\dis S<\alpha+\beta,
\end{equation}
where $\overline{T\circ S}$ is the closure of $T\circ S$.
By the gluing lemma, there exists a probability measure $\pi_{XYZ}$ on $X\times Y\times Z$ such that $\pi_{XY}=(\pr_{12})_*\pi_{XYZ}$ and $\pi_{YZ}=(\pr_{23})_*\pi_{XYZ}$. We put $\pi_{XZ}:=(\pr_{13})_*\pi_{XYZ}$ and we have
\begin{equation}\label{thm:boxTriangle:eq3}
\begin{aligned}
\pi_{XZ}(\overline{T\circ S})&=\pi_{XYZ}(\pr_{13}^{-1}(\overline{T\circ S}))\\
&\ge \pi_{XYZ}(\pr_{12}^{-1}(S)\cap \pr_{23}^{-1}(T))\\
&\ge \pi_{XY}(S)+\pi_{YZ}(T)-1\\
&>(1-\alpha)+(1-\beta)-1=1-\alpha-\beta.
\end{aligned}
\end{equation}
By \eqref{thm:boxTriangle:eq2} and \eqref{thm:boxTriangle:eq3}, we have
$\square(X,Z)\le \alpha+\beta$. This implies \eqref{thm:boxTriangle:eq1}.
\end{proof}

\begin{thm}[\cite{Gmv:green}, cf. Theorem 5.13 in \cite{Shioya:mmg}]\label{thm:obsTriangle}
The observable distance function $\dconc$ satisfies the triangle inequality.
\end{thm}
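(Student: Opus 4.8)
The plan is to imitate the proof of Theorem~\ref{thm:boxTriangle}, replacing the distortion estimates there by estimates for the Ky Fan metric and exploiting the fact that, for a fixed probability measure $\mu$, the function $\dkf\mu$ is itself a metric on measurable functions and hence satisfies the triangle inequality. Let $X$, $Y$, $Z$ be three mm-spaces and fix real numbers $\alpha>\dconc(X,Y)$ and $\beta>\dconc(Y,Z)$. By Proposition~\ref{prop:transportConc} I would choose transport plans $\pi_{XY}\in\Pi(m_X,m_Y)$ and $\pi_{YZ}\in\Pi(m_Y,m_Z)$ with $\dconcpi{\pi_{XY}}(X,Y)<\alpha$ and $\dconcpi{\pi_{YZ}}(Y,Z)<\beta$. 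By the gluing lemma there is a Borel probability measure $\pi_{XYZ}$ on $X\times Y\times Z$ with $(\pr_{12})_*\pi_{XYZ}=\pi_{XY}$ and $(\pr_{23})_*\pi_{XYZ}=\pi_{YZ}$; set $\pi_{XZ}:=(\pr_{13})_*\pi_{XYZ}$, which lies in $\Pi(m_X,m_Z)$. The goal is then to show $\dconcpi{\pi_{XZ}}(X,Z)\le\alpha+\beta$, from which Proposition~\ref{prop:transportConc} gives $\dconc(X,Z)\le\alpha+\beta$, and letting $\alpha\downarrow\dconc(X,Y)$ and $\beta\downarrow\dconc(Y,Z)$ yields the desired inequality.

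The key technical ingredient is a pullback invariance of the Ky Fan metric, which is the same computation as in Lemma~\ref{lem:kfCoupParameter}: if $F\colon(A,\mu)\to(B,\nu)$ is measurable with $F_*\mu=\nu$, then $\dkf\mu(u\circ F,v\circ F)=\dkf\nu(u,v)$ for all measurable $u,v\colon B\to\R$, since the two super-level sets correspond under $F^{-1}$. Writing $\pr_1,\pr_2,\pr_3$ for the coordinate projections of $X\times Y\times Z$, I would apply this with $F=\pr_{12}$ and $F=\pr_{23}$ to lift the two chosen bounds up to $X\times Y\times Z$: for each $f\in\Lip_1(X)$ there is $g\in\Lip_1(Y)$ with $\dkf{\pi_{XYZ}}(\pr_1^*f,\pr_2^*g)<\alpha$, and for that same $g$ there is $h\in\Lip_1(Z)$ with $\dkf{\pi_{XYZ}}(\pr_2^*g,\pr_3^*h)<\beta$.

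Now the triangle inequality of the metric $\dkf{\pi_{XYZ}}$ yields
\[
\dkf{\pi_{XYZ}}(\pr_1^*f,\pr_3^*h)\le\dkf{\pi_{XYZ}}(\pr_1^*f,\pr_2^*g)+\dkf{\pi_{XYZ}}(\pr_2^*g,\pr_3^*h)<\alpha+\beta .
\]
Applying the pullback invariance once more, this time with $F=\pr_{13}$ (so that $F_*\pi_{XYZ}=\pi_{XZ}$), transports this bound down to $X\times Z$ and shows that $f$ lies within $\dkf{\pi_{XZ}}$-distance $\alpha+\beta$ of $\pr_2^*\Lip_1(Z)$; hence $\pr_1^*\Lip_1(X)\subset U_{\alpha+\beta}^{\dkf{\pi_{XZ}}}(\pr_2^*\Lip_1(Z))$. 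Running the same chain in reverse, starting from an arbitrary $h\in\Lip_1(Z)$ and passing through $\pi_{YZ}$ and then $\pi_{XY}$, gives the opposite inclusion, so $d_\mathrm{H}^{\dkf{\pi_{XZ}}}(\pr_1^*\Lip_1(X),\pr_2^*\Lip_1(Z))\le\alpha+\beta$, i.e. $\dconcpi{\pi_{XZ}}(X,Z)\le\alpha+\beta$.

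I expect the only genuine obstacle to be the bookkeeping of the various projections together with the verification of the pullback invariance across the triple product; once that is in place, the argument is driven entirely by the triangle inequality of $\dkf{\pi_{XYZ}}$ and the gluing lemma, paralleling the box case. In contrast to Theorem~\ref{thm:boxTriangle}, no closed ``approximating'' sets are needed here, since the whole estimate takes place inside the single metric $\dkf{\pi_{XYZ}}$.
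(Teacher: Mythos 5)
Your proposal is correct and follows essentially the same route as the paper's own proof: near-optimal plans from Proposition~\ref{prop:transportConc}, the gluing lemma, the pullback invariance of $\kf$ under measure-preserving maps (the computation of Lemma~\ref{lem:kfCoupParameter}, which the paper writes out inline as a chain of equalities), and the triangle inequality of $\dkf{\pi_{XYZ}}$ on the triple product. The only difference is presentational, in that you isolate the pullback invariance as a named lemma rather than unfolding it within the main estimate.
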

\begin{proof}
Let $X$, $Y$ and $Z$ be three mm-spaces. Let us prove
\begin{equation}\label{thm:obsTriangle:eq1}
\dconc(X,Z)\le \dconc(X,Y)+\dconc(Y,Z).
\end{equation}
Take any real number $\alpha>\dconc(X,Y)$ and $\beta>\dconc(Y,Z)$. 

By Proposition \ref{prop:transportConc}, there exists $\pi_{XY}\in \Pi(X,Y)$ such that
\begin{equation}\label{thm:obsTriangle:start1}
\dconcpi{\pi_{XY}}(X,Y)<\alpha.
\end{equation}
Similarly, there exists $\pi_{YZ}\in \Pi(Y,Z)$ such that
\begin{equation}\label{thm:obsTriangle:start2}
\dconcpi{\pi_{YZ}}(Y,Z)<\beta.
\end{equation}
By the gluing lemma, there exists a probability measure $\pi_{XYZ}$ on $X\times Y\times Z$ such that $\pi_{XY}=(\pr_{12})_*\pi_{XYZ}$ and $\pi_{YZ}=(\pr_{23})_*\pi_{XYZ}$. We put $\pi_{XZ}:=(\pr_{13})_*\pi_{XYZ}$. Let us prove
\begin{equation}\label{thm:obsTriangle:eq2}
(\pr_1)^*\Lip_1(X)\subset U_{\alpha+\beta}^{\pi_{XZ}}((\pr_2)^*\Lip_1(Z)),
\end{equation}
where the set $U_\alpha^\pi(A):=\{f:X\times Y\to\R\mid \dkf\pi(f,A)<\alpha \}$ is the open $\alpha$-neighborhood of $A$ with respect to $\dkf\pi$.
Take any $\varphi\in\Lip_1(X)$. By \eqref{thm:obsTriangle:start1}, there exists $\psi\in\Lip_1(Y)$ such that
\begin{equation}\label{thm:obsTriangle:eq3}
\dkf{\pi_{XY}}((\pr_1)^*\varphi,(\pr_2)^*\psi)<\alpha.
\end{equation}
By \eqref{thm:obsTriangle:start2}, there exists $\chi\in\Lip_1(Y)$ such that
\begin{equation}\label{thm:obsTriangle:eq4}
\dkf{\pi_{YZ}}((\pr_1)^*\psi,(\pr_2)^*\chi)<\beta.
\end{equation}
Now we have
\begin{equation}\label{thm:obsTriangle:eq5}
\begin{aligned}
&\dkf{\pi_{XZ}}((\pr_1)^*\varphi,(\pr_2)^*\chi)\\
&= \dkf{\pi_{XYZ}}((\pr_{13})^*(\pr_1)^*\varphi,(\pr_{13})^*(\pr_{2})^*\chi)\\
&=\dkf{\pi_{XYZ}}((\pr_1)^*\varphi,(\pr_3)^*\chi)\\
&\le\dkf{\pi_{XYZ}}((\pr_1)^*\varphi,(\pr_2)^*\psi)+\dkf{\pi_{XYZ}}((\pr_2)^*\psi,(\pr_3)^*\chi)\\
&=\dkf{\pi_{XYZ}}((\pr_{12})^*(\pr_1)^*\varphi,(\pr_{12})^*(\pr_{2})^*\psi)\\
&\quad+\dkf{\pi_{XYZ}}((\pr_{23})^*(\pr_1)^*\psi,(\pr_{23})^*(\pr_{2})^*\chi)\\
&=\dkf{\pi_{XY}}((\pr_1)^*\varphi,(\pr_2)^*\psi)+\dkf{\pi_{YZ}}((\pr_1)^*\psi,(\pr_2)^*\chi)\\
&<\alpha+\beta.
\end{aligned}
\end{equation}
By \eqref{thm:obsTriangle:eq5}, we obtain \eqref{thm:obsTriangle:eq2}.
Similarly, we have 
\begin{equation}\label{thm:obsTriangle:eq6}
(\pr_2)^*\Lip_1(Z)\subset U_{\alpha+\beta}^{\pi_{XZ}}((\pr_1)^*\Lip_1(X)).
\end{equation}
By \eqref{thm:obsTriangle:eq5} and \eqref{thm:obsTriangle:eq6}, we have
\[
\dconc(X,Z)\le\dconcpi{\pi_{XZ}}(X,Z)\le \alpha+\beta.
\]
This completes the proof.
\end{proof}
  
\end{document}